\begin{document}

\title{High-frequency Donsker theorems for L\'evy measures }

\author{Richard Nickl~*, ~~~ Markus Rei\ss ~$\dagger$, ~~~Jakob S\"ohl~*,~~~Mathias Trabs $\dagger$  \\
\\
\textit{University of Cambridge \footnote{Statistical Laboratory, Department of Pure Mathematics and Mathematical Statistics, University of Cambridge, CB30WB, Cambridge, UK. Email: r.nickl@statslab.cam.ac.uk, j.soehl@statslab.cam.ac.uk}} ~ and  \textit{ Humboldt-Universit\"at zu Berlin \footnote{Institut f\"ur Mathematik, Humboldt-Universit\"at zu Berlin, Unter den Linden 6, 10099 Berlin, Germany. Email: mreiss@math.hu-berlin.de, trabs@math.hu-berlin.de }}}

\maketitle

\begin{abstract}
Donsker-type functional limit theorems are proved for empirical processes arising from discretely sampled increments of a univariate L\'evy process. In the asymptotic regime the sampling frequencies increase to infinity and the limiting object is a Gaussian process that can be obtained
from the composition of a Brownian motion with a covariance operator
determined by the L\'evy measure. The results are applied
to derive the asymptotic distribution of natural estimators for the
distribution function of the L\'evy jump measure. As an application we deduce
Kolmogorov-Smirnov type tests and confidence bands.

\medskip

\noindent\textit{MSC 2000 subject classification}: Primary: 60F05;\
Secondary: 60G51, 62G05

\noindent\textit{Key words: High-frequency inference,
Donsker theorem, L\'evy process, empirical process.}
\end{abstract}

\section{Introduction}

Suppose that $(L_t: t \ge 0)$ is a real-valued L\'evy process defined on some
probability space $(\Omega, \mathcal A, \Pr)$ and we observe $n$ of its
increments
\begin{equation} \label{data}
X_k = L_{k \Delta} - L_{(k-1)\Delta},\quad k =1, \dots, n,
\end{equation}
sampled at frequency $1/\Delta>0$. Equivalently the $X_k$'s are drawn i.i.d.~from some
infinitely divisible distribution $\PP_\Delta$, with corresponding empirical measures $\PP_{\Delta, n} = \frac{1}{n} \sum_{k=1}^n \delta_{X_k}$.

\smallskip

L\'evy processes are increasingly popular in stochastic modelling. A question of key importance is how the structure of the L\'evy process, particularly its jump behaviour, can be
recovered from these observed increments.
From a statistical point of view it is natural to consider a growing observation
horizon $n \Delta \to \infty$. If simultaneously $\Delta = \Delta_n$ approaches
zero one speaks of a `high-frequency' sampling regime, as opposed to
`low-frequency' sampling where $\Delta$ remains fixed.
Inference problems of this kind have recently gained increased attention.
\cite{JMV05} studied nonparametric inference for L\'evy-driven
Ornstein--Uhlenbeck processes. \cite{BR06} treat nonparametric estimation of
L\'evy processes in a financial model.
Low-frequency observations were considered, e.g., by
\cite{neumannReiss2009}, \cite{B10}, \cite{G12} as well as
\cite{NicklReiss2012}, whereas
\cite{figueroa-lopez2009,Figueroa-Lopez2011} treats high-frequency
observations.
Nonparametric estimation of L\'evy processes in a model selection context was
studied by
\cite{CGC11} and
\cite{kappus2014}. A general discussion of the literature
and further references can be found in the recent survey paper \cite{R13}.

\smallskip

By the L\'evy--Khintchine representation (\cite{sato1999}) the L\'evy process $(L_t: t \ge 0)$ is entirely
characterised by three parameters: the \textit{diffusion coefficient} $\sigma^2$
describing the Brownian motion component, the \textit{centring} or
\textit{drift} parameter $\gamma$, and the \textit{L\'evy measure} $\nu$.
Recovering the L\'evy process can thus be reduced to recovering the L\'evy
triplet $(\sigma^2, \gamma, \nu)$. Statistical inference for the one-dimensional
parameters $\sigma^2, \gamma$ can be based on standard statistics such as
the quadratic variation and the sample average of the increments, or on spectral
estimators, see Section~\ref{discussion} for discussion and references.

\smallskip

An intrinsically more complex problem than inference on $\sigma^2$ and $\gamma$
is the recovery of the L\'evy measure $\nu$, which describes the jump behaviour
of the L\'evy process. We recall that there is a bijection between the set of
L\'evy measures $\nu$ and all positive Borel measures $\nu$ on $\R$ s.t.
\[\int_{\R} (1 \wedge x^2) \nu(\d x) <\infty, \quad \nu(\{0\})=0.\]
Thus a natural target is to recover the cumulative distribution function
\begin{equation}\label{eq:cdf}
N(t) 
= \int_{-\infty}^t (1 \wedge x^2) \nu(\d x), ~~t \in \R,
\end{equation}
from the observed increments; it encodes both
local and global information about $\nu$.
The presence of $(1\wedge x^2)$
smooths the singularity
that $\nu$ may possess at the origin.
Other possibilities to smooth the singularity exist and our results will cover
functions from a general
class (see Section~\ref{secProc}).
In particular this will include recovery of the
distribution function
\begin{equation}
\mathcal N(t) =\int_{-\infty}^t \nu(\d x), ~t < 0,\quad \text{and} \quad
\mathcal N(t) =\int_{t}^{\infty} \nu(\d x), ~t > 0,
\end{equation}
of the L\'evy measure at any point $t\neq0$.

For statistical applications, inference on the functions $N, \mathcal N$ in the
uniform norm $\|\cdot\|_\infty$ on the real line is of particular interest,
paralleling the classical Donsker-Kolmogorov-Smirnov central limit theorems
$$\sqrt n (F_n -F) \to^{\mathcal L} \mathbb G_F $$ in the space of bounded functions on $\R$, where $F_n$ is the empirical
distribution function of a random sample from distribution $F$, and where
$\mathbb G_F$ is the $F$-Brownian bridge (\cite{dudley1999,
vanderVaartWellner1996}).  In the L\'evy setting, \cite{NicklReiss2012} considered an estimator for the distribution function
$\mathcal N(t)$, $|t|\ge\zeta,$  based on low-frequency observations ($\Delta$ fixed) and proved such a Donsker-Kolmogorov-Smirnov theorem.
The purpose of the present article is to derive such results when also $\Delta
\to 0$. The main message is that high-frequency observations reveal much finer
statistical properties of the L\'evy measure, and inference is possible for a
much larger class of L\'evy processes than considered in \cite{NicklReiss2012},
including processes with a nonzero Gaussian component. Moreover, the theory does
not only cover nonlinear `inversion' estimators based on the L\'evy-Khintchine
formula, but also `linear' estimators  based on elementary counting statistics.
At the heart of these results is a general purpose uniform central limit theorem
for a basic `smoothed empirical process' arising from the $X_k$'s in
(\ref{data}), see  Theorem \ref{main1} below.

\smallskip

In the next section we introduce the estimators and give the main results as
well as some statistical applications. In
Section~\ref{secProc} we show how to reduce the proofs to the study of a unified smoothed empirical process, and in Section~\ref{discussion} we discuss
our conditions and their interpretation in a variety of concrete examples of
L\'evy processes. The remainder of the article is then devoted to the proofs of
our results.

\section{Main results: Asymptotic inference on the L\'evy measure $\nu$}\label{secEstimators}

In this section we study two approaches to estimate the distribution
functions $N, \mathcal N$ of a L\'evy measure, based on discrete observations (\ref{data}). The first estimator is constructed by a direct
approach and counts the number of increments below a certain threshold, where
increments are weighted by $1\wedge X_k^2$. The second approach
relies on the L\'evy--Khintchine representation and a
spectral regularisation step.

\subsection{Basic notation and assumptions}

The symbol $\ell^\infty(T)$ denotes the space of bounded functions on a set $T$ normed by the usual supremum norm $\|\cdot\|_\infty$. We will measure the smoothness of functions in a local H\"older
norm: Denoting by $C(U)=C^0(U)$ the set of all functions on an open
set $U\subset\R$ which are  bounded, continuous and real-valued, we
define for $s>0$ the H\"older spaces
\[
  C^s(U):=\Big\{f\in C(U): \|f\|_{C^s(U)}:=\sum_{k=0}^{\lfloor
s\rfloor}\sup_{x\in U}|f^{(k)}(x)|+\sup_{x,y\in U:x\neq y} \frac{|f^{(\lfloor
s\rfloor)}(x)-f^{(\lfloor s\rfloor)}(y)|} {|x-y|^{ s-\lfloor
s\rfloor}}<\infty\Big\}
\]
where $\lfloor  s \rfloor$ denotes the largest integer strictly smaller than
$s$.

\smallskip

We assume throughout this article that the L\'evy measure has finite second moments,
\begin{equation}\label{eqSecMom}
 \int_{\R} x^2 \nu(\d x) <\infty.
\end{equation}
This is equivalent to $\PP_\Delta$ having finite second moments $\forall \Delta>0$ (\cite{sato1999}).

For our main results we will rely on the following stronger assumption on $\nu$. Slightly abusing notation we shall use the same symbol for a measure and its Lebesgue density, if the latter exists. Also we use
$\lesssim, \gtrsim, (\sim)$ to denote (two-sided)
inequalities up to a multiplicative constant.

\begin{assumption}\label{assProc}
  \begin{enumerate}
  \item[(a)] For some $\eps>0$ we have $$\int_{\R}|x|^{4+\eps}\nu (d
x)<\infty.$$
  \item[(b)] The L\'evy measure $\nu$ has a Lebesgue density, also denoted by $\nu$, and $$(1\wedge x^4)\nu\in \ell^\infty(\R).$$
  \item[(c)] The measure $x^3\PP_{\Delta}$ admits a Lebesgue density, also denoted by $x^3\PP_\Delta$, satisfying, as $\Delta \to 0$,
$$\norm{x^3\PP_{\Delta}}_\infty\lesssim \Delta.$$
  \item[(d)] Let $U$ be a neighbourhood of the origin and $V \subset \R$. For some $s>0$ and some finite constants $c_t>0$, $t\in V$, we have
\begin{align*}
 \|g_t(-\cdot) \ast
(x^2\nu)\|_{C^s(U)} \le c_t \qquad \text{ with }
g_{t}(x):=
(1\wedge x^{-2})\1_{(-\infty,t]}(x).
\end{align*}
 \end{enumerate}
\end{assumption}

\medskip

Assumptions~(a) and (b) are a moment condition and a mild regularity condition
on the L\'evy measure, respectively. Assumption~(c) is
the key condition and will be discussed in detail in Section~\ref{sec:Boundx3P}. Here we just remark that for
instance under the assumption $x^3\nu\in \ell^\infty(\R)$, this condition will be shown
 to be satisfied whenever the diffusion coefficient is positive ($\sigma>0$).
Assumption~(d) is used to control approximation theoretic properties of the distribution function of $x^2\nu$. For global results ($V=\R$) we notice that it is easily seen that (d)
is satisfied with a uniform constant $c>0$ if $x^2\nu\in C^{s-1}(\R), s \ge 1$.

Recall that a function $l$ defined on $(0,\infty)$ is \emph{slowly varying} at
the origin if
\[\frac{l(tx)}{l(t)}\to1,\qquad \text{as } t\to0,~~\forall x>0.\]
A function $f$ is \emph{regularly varying} at the origin with exponent
$p\in\R$ if $f$ is of the form
\begin{align*}
 f(x)=x^p l(x)
\end{align*}
with $l$ slowly varying at the origin.
We denote the
symmetrised L\'evy density by $\tilde\nu(x):=\nu^+(x)+\nu^-(-x)$,
where $\nu^+=\nu\1_{\R^+}$ and
$\nu^-=\nu\1_{\R^-}$.

Throughout the paper we write $\to^\mathcal L$
to denote convergence in distribution of random elements in a
metric space as in Chapter 1 in \cite{vanderVaartWellner1996}.

\subsection{The direct estimation approach}

In the
high-frequency regime $\Delta \to 0$ inference on $\nu$ can be based on the
following simple observation.
\begin{lemma} \label{weakcon}
If the L\'evy measure $\nu$ satisfies \eqref{eqSecMom}, then we have weak
convergence
\begin{equation} \label{eqWeakConvNu}
x^2  \frac{\PP_\Delta}{\Delta}  \to \sigma^2 \delta_0 + x^2 \nu
\end{equation}
as $\Delta \to 0$ in the sense that
\begin{equation} \label{prettyweak}
\int_{\R} f(x) x^2 \frac{\PP_\Delta(\d x)}{\Delta} \to \sigma^2 f(0) + \int_{\R}
f(x) x^2 \nu(\d x)
\end{equation}
for every bounded continuous function $f: \R \to \R $.
\end{lemma}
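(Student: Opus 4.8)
The plan is to transfer the statement to Fourier transforms and close via the continuity theorem for finite measures. Set $\mu_\Delta := \Delta^{-1} x^2 \PP_\Delta$ and $\mu := \sigma^2\delta_0 + x^2\nu$; by \eqref{eqSecMom} these are finite Borel measures on $\R$, and \eqref{prettyweak} is precisely the assertion that $\mu_\Delta \to \mu$ in the weak topology induced by $C_b(\R)$. It therefore suffices to prove (i) the masses $\mu_\Delta(\R)$ converge to $\mu(\R)$, and (ii) the Fourier transforms $\widehat{\mu_\Delta}$ converge pointwise to $\widehat\mu$, which is continuous; by L\'evy's continuity theorem (applied to the normalised probability measures $\mu_\Delta/\mu_\Delta(\R)$ when $\mu\neq0$, the limiting mass $\mu(\R)$ being a strictly positive constant, the case $\mu=0$ being trivial) these two facts give the claim.

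For the computation, write $\varphi_\Delta(u):=\E[e^{iuX_\Delta}]=e^{\Delta\psi(u)}$ with L\'evy exponent $\psi(u)=ibu-\tfrac12\sigma^2u^2+\int_\R(e^{iux}-1-iux)\,\nu(\d x)$ and $b=\E[L_1]$, this centred form being admissible since $\int_\R(|x|\wedge x^2)\,\nu(\d x)<\infty$ under \eqref{eqSecMom} (\cite{sato1999}). As $\int_\R x^2\,\nu(\d x)<\infty$, dominated convergence lets us differentiate twice under the integral, yielding for every $u$ a finite $\psi'(u)$ and $\psi''(u)=-\sigma^2-\int_\R x^2e^{iux}\,\nu(\d x)$; hence $\varphi_\Delta\in C^2$ with $\varphi_\Delta''(u)=\bigl(\Delta\psi''(u)+\Delta^2\psi'(u)^2\bigr)e^{\Delta\psi(u)}$. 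Since $\widehat{x^2\PP_\Delta}(u)=-\varphi_\Delta''(u)$, this gives
\[
  \widehat{\mu_\Delta}(u)=-\bigl(\psi''(u)+\Delta\,\psi'(u)^2\bigr)e^{\Delta\psi(u)}.
\]
Setting $u=0$ (with $\psi'(0)=ib$) recovers $\mu_\Delta(\R)=\widehat{\mu_\Delta}(0)=\sigma^2+\int_\R x^2\,\nu(\d x)+\Delta b^2\to\mu(\R)$, which is (i).

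For (ii), fix $u$ and let $\Delta\to0$: then $e^{\Delta\psi(u)}\to1$ and $\Delta\psi'(u)^2\to0$, so $\widehat{\mu_\Delta}(u)\to-\psi''(u)=\sigma^2+\int_\R x^2e^{iux}\,\nu(\d x)=\widehat\mu(u)$, and $\widehat\mu$ is continuous, being the Fourier transform of a finite measure. The only genuinely delicate point is the passage from this pointwise convergence of $\widehat{\mu_\Delta}$ to weak convergence of $\mu_\Delta$ — i.e.\ the tightness built into the continuity theorem — which is available precisely because the common limit is continuous at the origin and the masses converge; the remaining ingredients — the identity $\varphi_\Delta=e^{\Delta\psi}$, the two differentiations under the integral, and the elementary $\Delta\to0$ limits — are routine bookkeeping enabled by the standing assumption \eqref{eqSecMom}.
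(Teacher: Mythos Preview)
Your proof is correct and follows essentially the same route as the paper's: both compute $\widehat{\mu_\Delta}(u)=-\Delta^{-1}\phi_\Delta''(u)=-(\psi''(u)+\Delta\psi'(u)^2)e^{\Delta\psi(u)}$ via the L\'evy--Khintchine representation, read off convergence of the total mass at $u=0$ and pointwise convergence of the Fourier transforms as $\Delta\to0$, and then close with L\'evy's continuity theorem applied to the normalised probability measures. The only cosmetic difference is that the paper normalises by $c_\Delta=\mu_\Delta(\R)$ from the outset, whereas you verify mass convergence and characteristic-function convergence as two separate ingredients before invoking the continuity theorem.
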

Starting with L\'evy processes without
diffusion component, that is, with $\sigma=0$, the asymptotic identification
\eqref{eqWeakConvNu} motivates a linear estimator of
$N(t)$ given by
\begin{equation}\label{eq:naiveEst}
 \tilde N_n(t):=\int_{-\infty}^t(1\wedge x^2)\frac{\PP_{\Delta,n}(\d
x)}{\Delta}
=\frac1{n\Delta}\sum_{k=1}^{n}
(1\wedge X_k^2)\1_{(-\infty,t]}(X_k),\quad t \in \R,
\end{equation}
where $\PP_{\Delta,n}=\frac{1}{n}\sum_{k=1}^{n}\delta_{X_k}$ is the empirical
measure of the increments from (\ref{data}).

Similarly, and including the case $\sigma \neq 0$, one can estimate the function $\mathcal N$ by
\begin{align*}
    \tilde{\mathcal N}_n(t):=\int_ {\R} f_t(x)\frac{\PP_{\Delta,n}(\d
x)}{\Delta}\quad\text{with}\quad f_t(x):=\begin{cases}
                                           \1_{(-\infty,t]},\quad t<0\\
                                           \1_{[t,\infty)},\quad t>0.
                                         \end{cases}
\end{align*}

\smallskip

We start with a theorem for the basic estimator $\tilde N_n$.

\begin{theorem}
\label{thm:UCLTnaive}
Let $\sigma=0$ and grant Assumption~\ref{assProc} for $V=\R$ for some
$s\in (0,2]$ and with uniform constant $\sup_{t\in\R}c_t=c<\infty$.

Assume either that

a) the density of $x \nu$ exists and is of bounded variation, and
the drift $\gamma_0 := \gamma-\int x \nu (\d x)=0$;~ or that

b) $\tilde\nu(x)=\nu^+(x)+\nu^-(-x)$ is regularly varying at
zero with exponent $-(\beta+1)$, $\beta\in(0,2), s \in (0,2-\beta)$.

If $n\to\infty$ and
$\Delta_{n}\to0$ such that
\begin{align*}
 n\Delta_{n}\to\infty,\quad \Delta_{n}=o(n^{-1/(s+1)})\quad\text{ and }\quad
\log^4(1/\Delta_n)=o(n\Delta_n),
\end{align*}
then
\[
\sqrt{n\Delta_n}\big(\tilde{N}_{n}-N\big)\to^{\mathcal{L}}\mathbb{G}
\quad\text{in}\quad\ell^{\infty}(\R),
\]
where $\mathbb{G}$ is a tight Gaussian random variable arising from the
centred Gaussian process
$\{\mathbb{G}(t):t\in\R\}$
with covariance
\[
\E[\mathbb{G}(t)\mathbb{G}(t')]=\int_{-\infty}^{t \wedge t'}(1\wedge
x^4)\nu(\d x),\quad t,t'\in\R.
\]
\end{theorem}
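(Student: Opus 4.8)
The plan is to center and split
\[
\sqrt{n\Delta_n}\bigl(\tilde N_n(t)-N(t)\bigr)=\mathbb G_n(t)+B_n(t),\qquad
\mathbb G_n(t):=\frac{1}{\sqrt{n\Delta_n}}\sum_{k=1}^{n}\bigl(\varphi_t(X_k)-\E[\varphi_t(X_k)]\bigr),
\]
where $\varphi_t(x):=(1\wedge x^2)\1_{(-\infty,t]}(x)=x^2 g_t(x)$ with $g_t$ the kernel of Assumption~\ref{assProc}(d), and $B_n(t):=\sqrt{n\Delta_n}\bigl(\E[\tilde N_n(t)]-N(t)\bigr)$ is deterministic. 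I would then prove separately (i) $\mathbb G_n\to^{\mathcal L}\mathbb G$ in $\ell^\infty(\R)$ and (ii) $\sup_{t\in\R}\abs{B_n(t)}\to0$; the assertion follows by Slutsky's lemma, the limit being tight because $t\mapsto N_4(t):=\int_{-\infty}^{t}(1\wedge x^4)\nu(\d x)$ is bounded, nondecreasing and continuous (finiteness by Assumption~\ref{assProc}(a)).

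For (i) the point is that $\mathbb G_n$ is a smoothed empirical process of exactly the type handled by the general Theorem~\ref{main1}, so one verifies its hypotheses for the index class $\mathcal G=\{\varphi_t:t\in\R\}$. Finite-dimensional convergence follows from the Lindeberg--Feller theorem: since $\abs{\varphi_t}\le1$ and $n\Delta_n\to\infty$, the Lindeberg condition for the summands $(n\Delta_n)^{-1/2}\varphi_t(X_k)$ is trivial for large $n$, while, using $(1\wedge x^2)^2=1\wedge x^4$ and Lemma~\ref{weakcon} (after approximating the indicator by continuous functions and invoking continuity of $N_4$), one gets $\Delta_n^{-1}\Cov\bigl(\varphi_s(X_1),\varphi_t(X_1)\bigr)\to N_4(s\wedge t)$, the product-of-means contribution being $O(\Delta_n)$. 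Asymptotic equicontinuity: $\mathcal G$ is pointwise nondecreasing in $t$, so for $s<t$ the bracket $[\varphi_s,\varphi_t]$ has $L^2(\PP_{\Delta_n})$-size $\bigl(\int_s^t(1\wedge x^4)\,\d\PP_{\Delta_n}\bigr)^{1/2}\sim\Delta_n^{1/2}(N_4(t)-N_4(s))^{1/2}$; hence, after rescaling by $\Delta_n^{-1/2}$, the class has bracketing numbers $N_{[\,]}(\eps,\cdot,L^2(\PP_{\Delta_n}))\lesssim\eps^{-2}$ uniformly in $n$ and a convergent entropy integral, and a triangular-array bracketing maximal inequality gives equicontinuity, the margin $\log^4(1/\Delta_n)=o(n\Delta_n)$ being what absorbs the chaining remainder and the increments clustering near the origin.

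For (ii), since $x^2 g_t(x)=(1\wedge x^2)\1_{(-\infty,t]}(x)$ we have $B_n(t)=\sqrt{n\Delta_n}\int g_t\,\d\mu_{\Delta_n}$ with the finite signed measure $\mu_\Delta:=\Delta^{-1}x^2\PP_\Delta-x^2\nu$, and it suffices to show $\sup_t\abs{\int g_t\,\d\mu_{\Delta_n}}=o\bigl((n\Delta_n)^{-1/2}\bigr)$. I would analyse $\PP_\Delta$ via the L\'evy--Khintchine representation (Fourier side, or a first-jump decomposition): the contribution of two-or-more large jumps has mass $O(\Delta^2)$ and is negligible, while the remaining part makes $\Delta^{-1}x^2\PP_\Delta$ equal to $x^2\nu$ convolved with an approximate identity $k_\Delta$ of a $\Delta$-dependent scale, up to errors from the L\'evy exponent that Assumption~\ref{assProc}(c) controls at the origin and hypothesis (a) (resp.~(b)) controls at high frequencies (ruling out a spurious $O(\Delta)$ drift/tail term). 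Moving the convolution onto the test function, $\int g_t\,\d(x^2\nu\ast k_\Delta-x^2\nu)=\int k_\Delta(z)\bigl[(g_t(-\cdot)\ast(x^2\nu))(z)-(g_t(-\cdot)\ast(x^2\nu))(0)\bigr]\,\d z$, and the $C^s(U)$ bound of Assumption~\ref{assProc}(d), with uniform constant $c$, bounds the bracket by $c(\abs{z}^s\wedge1)$; integrating against $k_\Delta$ then yields $\sup_t\abs{\int g_t\,\d\mu_{\Delta_n}}\lesssim\Delta_n^{s/2}$ up to logarithmic factors, which is $o((n\Delta_n)^{-1/2})$ precisely because $\Delta_n=o(n^{-1/(s+1)})$.

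The main obstacle is step~(ii): controlling $\Delta^{-1}x^2\PP_\Delta-x^2\nu$ uniformly over $t$ when tested against the \emph{non-smooth}, slowly decaying functions $g_t$, at the sharp rate dictated by the effective resolution of $\PP_\Delta$ near the origin --- this is exactly what Assumptions~(c),~(d) and the dichotomy (a)/(b) are tailored to deliver, and it is also where the hypothesis $\Delta_n=o(n^{-1/(s+1)})$ becomes binding. The triangular-array equicontinuity in step~(i) is a genuine but lesser technical point, since the sampling law $\PP_{\Delta_n}$, and with it the intrinsic semimetric of the process, changes with $n$.
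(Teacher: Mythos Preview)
Your decomposition $\sqrt{n\Delta_n}(\tilde N_n-N)=\mathbb G_n+B_n$ and the plan (Theorem~\ref{main1} with $m=1$ for the stochastic part, then a separate bias bound $\sup_t|B(t)|=O(\Delta^{s/2})$) is exactly what the paper does. Two points of comparison are worth making.

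For the bias, the paper's route is cleaner and more explicit than your ``first-jump decomposition'' heuristic. The paper writes
\[
B(t)=B_1(t)+B_2(t),\qquad B_2(t):=\int g_t\,\d\bigl((x^2\nu)\ast\PP_\Delta-x^2\nu\bigr),
\]
so the ``approximate identity'' $k_\Delta$ is simply $\PP_\Delta$ itself; your convolution argument then matches the paper's treatment of $B_2$ verbatim (Assumption~\ref{assProc}(d) on $h_t=g_t(-\cdot)\ast(x^2\nu)$ and $\E|X_1|^s\lesssim\Delta^{s/2}$). The remaining term $B_1(t)=\int g_t\,\d(\Delta^{-1}x^2\PP_\Delta-(x^2\nu)\ast\PP_\Delta)$ is handled not by jump-counting but by the exact Fourier identity $-\Delta^{-1}\phi_\Delta''-\F[x^2\nu]\phi_\Delta=\Delta(\psi')^2\phi_\Delta$, which gives
\[
B_1(t)=-\frac{\Delta}{2\pi}\int \F g_t(-u)\,\psi'(u)^2\,\phi_\Delta(u)\,\d u.
\]
This is where hypotheses (a) and (b) actually enter: in (a), $\gamma_0=0$ and $x\nu\in BV$ force $|\psi'(u)|=|\F[x\nu](u)|\lesssim(1+|u|)^{-1}$, giving $B_1=O(\Delta)$; in (b), regular variation of $\tilde\nu$ yields $|\phi_\Delta(u)|\lesssim e^{-c\Delta|u|^{\beta^-}}$ (Orey) and $|\psi'(u)|\lesssim 1+|u|^{(\beta^+-1)\vee 0}$, which after substitution gives $B_1=O(\Delta^p)$ for any $p<(2-\beta)/\beta\vee 1$. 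Note also that Assumption~\ref{assProc}(c) plays no role in the bias analysis --- it is used only inside Theorem~\ref{main1}.

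For the stochastic term, you invoke Theorem~\ref{main1} but then sketch a direct monotone-bracketing argument for equicontinuity. That shortcut is plausible for $m=1$ since $\varphi_t$ is bounded and increasing in $t$, but it is not how Theorem~\ref{main1} is proved (the paper decomposes $g_t$ into three pieces and uses a mix of Besov-space bracketing and a symmetrisation/Gin\'e--Zinn-type argument, designed to handle general multipliers $m$). Your bracketing sketch would still need a uniform-in-$n$ comparison of $\Delta_n^{-1}\int_s^t(1\wedge x^4)\,\d\PP_{\Delta_n}$ with $N_4(t)-N_4(s)$, which weak convergence alone does not deliver; if you pursue it, the identity $x^2\PP_\Delta=\Delta(\sigma^2\delta_0+x^2\nu)\ast\PP_\Delta+4(x\PP_{\Delta/2})^{\ast 2}$ is the natural tool.
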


Since estimation at the origin $t=0$ is included in the last theorem, the assumption $\sigma=0$ is natural  -- the simple linear estimator $\tilde N_n$ cannot distinguish between arbitrarily small jumps and a Brownian diffusion component. Moreover, setting the drift $\gamma_0 =0$ in a) rules out situations where the measure $\PP_\Delta$ has a discrete component $\delta_{\Delta \gamma_0}$, which causes complications in the analysis.  Simultaneous estimation of all parameters of the L\'evy triplet without restrictions on $\gamma$ and $\sigma$ will be considered by non-linear methods in the next subsection.

 The conditions a) and b) are required to show that the deterministic `bias' term arising from the basic
linear estimator is negligible in the limit distribution (Proposition
\ref{prop:bias}).  The case a) covers many examples of finite activity L\'evy processes as well as some limiting cases where the singularity of $\nu$ at the origin behaves like $|x|^{-1}$ (see Subsection \ref{secExamples} for examples). In contrast case b) covers infinite activity processes with a singularity of the form $|x|^{-1-\beta}, \beta \in (0,2)$. The assumption of regular variation of $\tilde \nu$ at zero is natural in all
key examples considered in Subsection \ref{secExamples} below -- typically the variation exponent will be closely related to the regularity $s$ of $N$, and we discuss in Section \ref{sec:discussion} how our
parameter constraints on $\beta$ and $s$ are compatible.

When the origin is \textit{excluded} from consideration, an argument of \cite{Figueroa-Lopez2011} can be used to obtain the following result for the linear estimator $\tilde {\mathcal N}$, allowing also for $\sigma \neq 0$:
\begin{theorem}\label{thm:UCLTnaive2}
Grant Assumptions~\ref{assProc}(a)-(c).
Let $\zeta>0$ and suppose that the L\'evy density $\nu$ is Lipschitz
continuous in an open set $V_0$ containing
$V=(-\infty,-\zeta]\cup[\zeta,\infty)$.
If $n\to\infty$ and $\Delta_{n}\to0$ such that
\begin{align*}
 n\Delta_{n}\to\infty,\quad \Delta_{n}=o(n^{-1/3})\quad\text{ and }\quad
\log^4(1/\Delta_n)=o(n\Delta_n).
\end{align*}
Then
\[
\sqrt{n\Delta_n}\big(\tilde{\mathcal
N}_{n}-\mathcal N\big)\to^{\mathcal{L}}\mathbb{W}
\quad\text{in}\quad\ell^{\infty}(V),
\]
where $\mathbb{W}$ is a tight Gaussian random variable arising from the
centred Gaussian process
$\{\mathbb{W}(t):t\in\R\}$
with covariance, for $f_t=\1_{(-\infty,t]}$ for $t<0$ and $f_t=\1_{[t,\infty)}$ for $t>0$,
\[
\E[\mathbb{W}(t)\mathbb{W}(t')]=\int_{\R}f_t(x)f_{t'}(x)\nu(\d
x),\quad t,t'\in V.
\]
\end{theorem}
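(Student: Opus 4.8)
The plan is to combine the general high-frequency uniform central limit theorem for the smoothed empirical process (Theorem~\ref{main1}) with a short-time expansion for the deterministic bias in the spirit of \cite{Figueroa-Lopez2011}; this mirrors the proof of Theorem~\ref{thm:UCLTnaive}, simplified by the fact that the kernels $f_t$, $t\in V$, are bounded and vanish on a fixed neighbourhood of the origin, which is exactly what permits $\sigma\neq0$ here. Writing $X_\Delta$ for a generic increment with law $\PP_\Delta$ and $\Delta=\Delta_n\to0$, the starting point is the decomposition, for every $t\in V$,
\[
\sqrt{n\Delta_n}\bigl(\tilde{\mathcal N}_n(t)-\mathcal N(t)\bigr)
=\frac{1}{\sqrt{n\Delta_n}}\sum_{k=1}^n\bigl(f_t(X_k)-\E f_t(X_\Delta)\bigr)
+\sqrt{n\Delta_n}\Bigl(\tfrac1{\Delta_n}\E f_t(X_\Delta)-\mathcal N(t)\Bigr)
=:S_n(t)+b_n(t),
\]
so that it suffices to prove $\sup_{t\in V}|b_n(t)|\to0$ and $S_n\to^{\mathcal L}\mathbb W$ in $\ell^\infty(V)$, whence Slutsky's lemma gives the claim. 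All estimates below are uniform over $t\in V$, the constants depending only on $\zeta$, the Lipschitz norm of $\nu$ on $V_0$, and low-order moments of $\nu$.

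For the bias, fix a truncation level $\eta\in(0,\zeta/2)$ and use the L\'evy--It\^o decomposition $X_\Delta=Y_\Delta+Z_\Delta$, with $Z$ a compound Poisson process of intensity $\lambda=\nu(\{|x|>\eta\})$ and jump law $\mu=\lambda^{-1}\nu\1_{\{|x|>\eta\}}$, and $Y$ an independent L\'evy process whose jumps are bounded by $\eta$, so that $\E Y_\Delta=O(\Delta)$ and $\E Y_\Delta^2=O(\Delta)$. Conditioning on the number of jumps of $Z$ on $[0,\Delta]$ gives
\[
\E f_t(X_\Delta)=e^{-\lambda\Delta}\,\E f_t(Y_\Delta)+\lambda\Delta\,e^{-\lambda\Delta}\,\E f_t(Y_\Delta+J)+O(\Delta^2),\qquad J\sim\mu .
\]
Since $Y$ has bounded jumps it possesses exponential moments of every order, so a Chernoff bound gives $\PP(|Y_\Delta|\ge\zeta)=O(\Delta^2)$ and the first summand is negligible. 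In the second summand, since $|t|\ge\zeta>\eta$ one has $\int_\R f_t(x)\,\mu(\d x)=\lambda^{-1}\mathcal N(t)$ and $\E f_t(Y_\Delta+J)=\E[F_J(t-Y_\Delta)]$, where $F_J$ is the distribution function of $J$ and $F_J'=\lambda^{-1}\nu$ is Lipschitz on $V_0$; a first-order Taylor expansion with Lipschitz remainder then yields $\E f_t(Y_\Delta+J)=\lambda^{-1}\mathcal N(t)-F_J'(t)\,\E Y_\Delta+O(\E Y_\Delta^2)=\lambda^{-1}\mathcal N(t)+O(\Delta)$, the rare event $\{|Y_\Delta|\ge\dist(V,V_0^c)\}$ being absorbed into the error. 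Hence $\E f_t(X_\Delta)=\Delta\,\mathcal N(t)+O(\Delta^2)$ uniformly in $t\in V$, so that $\sup_{t\in V}|b_n(t)|=O\bigl(\sqrt{n\Delta_n^{3}}\bigr)\to0$ by the hypothesis $\Delta_n=o(n^{-1/3})$.

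For the stochastic term, $S_n$ is an instance of the unified smoothed empirical process of Section~\ref{secProc} with kernel family $\mathcal F=\{f_t:t\in V\}$, and I would apply the corresponding uniform central limit theorem (Theorem~\ref{main1}). Assumptions~\ref{assProc}(a)--(c) are granted; the entropy/regularity requirement on the kernel family reduces, for the half-line indicators $f_t$, to the Lipschitz continuity of $\nu$ near $V$, which here plays the role of Assumption~\ref{assProc}(d) (cf.\ the remark after Assumption~\ref{assProc}); and since $\PP_\Delta$ has, away from the origin, Lebesgue density at most $|x|^{-3}\|x^3\PP_\Delta\|_\infty$, Assumption~\ref{assProc}(c) yields the correct scaling $\Delta^{-1}\|f_t\|_{L^2(\PP_\Delta)}^2=\Delta^{-1}\PP(|X_\Delta|\ge\zeta)\le\zeta^{-2}\,\Delta^{-1}\|x^3\PP_\Delta\|_\infty=O(1)$. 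The limiting covariance is identified by applying the short-time expansion above to the products $f_tf_{t'}$ (again bounded, vanishing near $0$, with discontinuities only at $t$ and $t'$, each $\mu$-null), which gives $\Delta^{-1}\E[(f_tf_{t'})(X_\Delta)]\to\int_\R f_t(x)f_{t'}(x)\nu(\d x)$, together with $\Delta^{-1}\E f_t(X_\Delta)\,\E f_{t'}(X_\Delta)=\Delta\,\mathcal N(t)\mathcal N(t')(1+o(1))\to0$; hence $\E[\mathbb W(t)\mathbb W(t')]=\int_\R f_t(x)f_{t'}(x)\nu(\d x)$, and $\mathbb W$ is tight since $\nu\1_{\{|x|\ge\zeta\}}$ is a finite measure, making $V$ totally bounded for the intrinsic semimetric $\rho(t,t')=\bigl(\int_\R(f_t(x)-f_{t'}(x))^2\nu(\d x)\bigr)^{1/2}$. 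Given these inputs, Theorem~\ref{main1} delivers $S_n\to^{\mathcal L}\mathbb W$ in $\ell^\infty(V)$; the finite-dimensional convergence inside that theorem is immediate from the Lindeberg--Feller theorem because $|f_t|\le1$ and $n\Delta_n\to\infty$.

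The step I expect to be the main obstacle lies inside Theorem~\ref{main1}: the uniform asymptotic equicontinuity of $S_n$. One must control the modulus of continuity of an empirical process that has been rescaled by $\Delta_n^{-1}$, and this is precisely where Assumption~\ref{assProc}(c) and the logarithmic rate condition cooperate -- (c) keeps the $L^2(\PP_\Delta)$-diameter of $\Delta^{-1/2}\mathcal F$ bounded, while $\log^4(1/\Delta_n)=o(n\Delta_n)$ ensures that, after truncating the increments at an appropriate fixed power of $\Delta_n$, the discarded mass contributes nothing to the limiting modulus of continuity. Everything else is either standard empirical-process machinery or routine bookkeeping with the L\'evy--It\^o decomposition.
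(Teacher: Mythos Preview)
Your overall strategy matches the paper's: decompose $\tilde{\mathcal N}_n-\mathcal N$ into a deterministic bias and a centred stochastic term, control the bias by a Figueroa-L\'opez-type small-time expansion, and handle the stochastic term via Theorem~\ref{main1} with $m=1$. Your bias argument is in fact a sketch of precisely the result the paper cites (Proposition~2.1 in \cite{Figueroa-Lopez2011}), and your identification of the limiting covariance and its tightness are fine.

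There is, however, one concrete step you skip that the paper carries out explicitly, and without it Theorem~\ref{main1} cannot be invoked. That theorem is not formulated for an arbitrary ``kernel family $\mathcal F=\{f_t\}$''; it applies to the specific process $\mathbb G_n(t)$ in \eqref{genproc}, indexed by $g_t=\rho\,\1_{(-\infty,t]}$ with a clipping function $\rho$ satisfying Assumption~\ref{clip}. To cast your $S_n$ in that form you must \emph{choose} such a $\rho$ with $\rho(x)=x^{-2}$ on $(-\infty,-\zeta]$ (and similarly on $[\zeta,\infty)$, treating the two halves separately by symmetry), so that $x^2 g_t(x)=f_t(x)$ for every $t\in V$. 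The paper does exactly this in its one-paragraph proof. Once $\rho$ is fixed, Assumptions~\ref{assProc}(a)--(c) and Assumption~\ref{clip} suffice for Theorem~\ref{main1}; your remark that ``the Lipschitz continuity of $\nu$ near $V$ plays the role of Assumption~\ref{assProc}(d)'' is a misattribution, since Assumption~\ref{assProc}(d) is not used anywhere in Theorem~\ref{main1} --- the Lipschitz hypothesis is needed only in the bias step, exactly where you already use it.
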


\smallskip

The estimators $\tilde N_n,\tilde{\mathcal{N}}_n$ are `linear' in the
observations $\PP_{\Delta, n}$, and their consistency relies on the assumption
that $\Delta_n$ tends
to zero fast enough, in Theorems~\ref{thm:UCLTnaive} and \ref{thm:UCLTnaive2} at least of order $\Delta_{n}=o(n^{-1/(s+1)})$ for $s\in(0,2]$. In both theorems a weaker assumption than
$\Delta_{n}=o(n^{-1/3})$ cannot be expected in general: In typical situations the function $\PP(X_{\Delta_n}\le t)$, $t<0$, can be
expressed in terms of $\Delta_n$ as a series expansion
\[\PP(X_{\Delta_n}\le t)=\nu((-\infty,t])\Delta_n+b_t\Delta_n^2+O(\Delta_n^3),~~b_t \in (0,\infty).\]
For a compound Poisson process this follows
by conditioning on the number of jumps but it also holds in more general
infinite activity cases \cite[see][]{FLH09}. From the expansion
we see that the approximation error
$\Delta_n^{-1}\PP(X_{\Delta_n}\le t)-\nu((-\infty,t])$ will not decay
faster than $\Delta_n$, and the assumption $\Delta_n=o(1/\sqrt{n\Delta_n})$ is expressed
equivalently as $\Delta_{n}=o(n^{-1/3})$.

\subsection{The spectral estimation approach}

Instead of relying on $\Delta\to0$ one can identify the L\'evy measure by the
L\'evy--Khintchine
formula
\begin{align}\label{eqLevyKhintchine}
  \phi_\Delta(u):=\E[e^{iuX_k}]=e^{\Delta\psi(u)}, \quad
  \psi(u)=-\frac{\sigma^2u^2}{2}+i\gamma u+\int_{\R}\big(e^{iux}-1-iux\big)\nu(\d x),~u \in \R,
\end{align}
which we give here in Kolmogorov's version (valid under \eqref{eqSecMom}, see (8.8) in \cite{sato1999}).
Differentiating the characteristic exponent $\psi(u)=\Delta^{-1}\log\phi_\Delta(u)$, one sees
\begin{align}\label{eqIdent}
  \psi''(u)&=\frac{\phi_\Delta''(u)\phi_\Delta(u)-(\phi_\Delta')^2(u)}{\Delta\phi_\Delta^2(u)}
  =-\sigma^2-\F[x^2\nu](u),
\end{align}
where $\F f(u):=\int e^{iux}f(x)\d x$ and $\F \mu(u) := \int e^{iux}\mu(\d x)$ for any $f\in L^1(\R)\cup L^2(\R)$ and any finite measure $\mu$, respectively, denotes the Fourier transform. If $\F^{-1}$ is the inverse Fourier transform we hence have
\begin{equation}\label{eqident2}
-\mathcal F^{-1}[\psi''] =  \sigma^2 \delta_0 + x^2 \nu.
\end{equation}
In contrast to (\ref{eqWeakConvNu}) this identification of $\nu$ is nonlinear in
$\varphi_\Delta = \F \PP_{\Delta}$, but has the remarkable advantage of being
nonasymptotic and valid for all $\Delta>0$, without relying on a high-frequency
approximation $\Delta \to 0$.  This was exploited in \cite{NicklReiss2012} to
show that a plug-in of the empirical characteristic function $\mathcal F
\PP_{\Delta, n}$ into (\ref{eqIdent}) can result, for a (naturally)
restricted class of L\'evy processes, in efficient recovery of $\mathcal N(t), t
\neq 0,$ \textit{without} the requirement $\Delta \to 0$.
In the low-frequency case only
L\'evy processes without diffusion
component can be covered. Our high-frequency setting allows us to drop
this (otherwise necessary) restriction and to treat L\'evy processes with diffusion
component and with L\'evy measures from a much wider class.

Replacing $\phi_\Delta(u)$ in (\ref{eqIdent}) by the
empirical characteristic function of the observed increments,
\[\phi_{\Delta,n}(u):=\F \PP_{\Delta, n} (u) = \frac{1}{n}\sum_{k=1}^ne^{iuX_k},\]
(and its derivatives $\phi_{\Delta, n}^{(i)}, i=1,2$, respectively), we obtain an empirical plug-in estimate
$\hat\psi_n''$ of $\psi''$. Recalling the definitions of $g_t, f_t$
in Assumption~\ref{assProc}(d) and in Theorem~\ref{thm:UCLTnaive2}, respectively, the
resulting estimators of $N,\mathcal
N$ are given by
\begin{align}
  \hat N_n(t)&:=\int_{\R}g_t(x)\F^{-1}\left[(-\hat\psi''_n-\hat\sigma^2)\mathcal
F K_h \right](x)\d x,\label{eq:kernelEst}\\
\hat{\mathcal{N}}_n(t)&:=\int_{\R}x^{-2}f_t(x)\F^{-1}\left[
(-\hat\psi''_n-\hat\sigma^2)\mathcal
F K_h \right](x)\d x.\notag
\end{align}
Here $K_h$ is a kernel such that $\F K_h$ has compact support, specified in detail below, ensuring in particular that $\hat N_n,\hat{\mathcal{N}}_n$ are well-defined (on sets of probability
approaching one). Moreover,  $\hat\sigma^2$ is any pilot estimate of $\sigma^2$.
 We can estimate $\sigma^2$ for instance as in
\cite{JacodReiss2013} by
\begin{align}\label{eq:sigmahat}
 \hat\sigma^2:=\frac{2}{\Delta u_n^2}\log(|\phi_{\Delta,n}(u_n)|)\qquad \text{
with }u_n:=\sqrt{\frac{2c_0\log(n)}{\Delta \sigma_{\max}^2}},
\end{align}
where $c_0>0$ is a suitable numerical constant, and if we assume a lower bound
on the characteristic function determined by $\sigma_{\max}>0$.
Under suitable conditions Proposition \ref{sigmaest} below entails that the
estimator~$\hat\sigma^2$ satisfies
\begin{align}\label{eq:sigmarate}
 \hat\sigma^2-\sigma^2=o_P((n\Delta)^{-1/2}),
\end{align}
and hence is negligible in the limit process $\mathbb G$ in the next theorem.
While the construction of an optimal estimator of $\sigma$ in the setting
considered here is a topic of independent interest, Theorem~\ref{thmUniform}
below will hold for \textit{any} plug-in estimator that satisfies
(\ref{eq:sigmarate}).

We regularise with a band-limited kernel $K_h:=h^{-1}K(h^{-1}\bull)$ of
bandwidth $h>0$. The following properties of $K$ are supposed:
\begin{gather}\label{propKernel}
  \begin{split}
    \int_{\R} K(x)\d x=1,\qquad \int x^lK(x)\d x=0 \quad\text{ for
}l=1,\dots,p,\\
    \supp \mathcal F K\subset[-1,1],\quad x^{p+1}K(x)\in L^1(\R), ~~p \in \mathbb N.
  \end{split}
\end{gather}
The main result for the spectral estimators is the following theorem,
where $\mathbb{G}$ and $\mathbb{W}$ are tight Gaussian random variables arising
from the same Gaussian processes as in
Theorems~\ref{thm:UCLTnaive} and \ref{thm:UCLTnaive2}, respectively. For
Part~(ii) we recall the definition $f_t=\1_{(-\infty,t]}$ for $t<0$ and
$f_t=\1_{[t,\infty)}$ for $t>0$.
\begin{theorem}\label{thmUniform}
Grant Assumptions~\ref{assProc}(a)-(c) and let $s>0$.
Let the kernel satisfy \eqref{propKernel}
with $p\ge s\vee 2$ and choose
$h_n\sim\Delta_n^{1/2}$.
Let either $\sigma^2$ be known (in which case $\hat \sigma^2 := \sigma^2$), or
let $\hat\sigma^2$ be any estimator satisfying \eqref{eq:sigmarate}.
Suppose $n\to\infty$ and $\Delta_{n}\to0$ such that
\begin{align*}
 n\Delta_{n}\to\infty,\quad \Delta_{n}=o(n^{-1/(s+1)})\quad\text{ and }\quad
\log^4(1/\Delta_n)=o(n\Delta_n).
\end{align*}
\begin{enumerate}
 \item\label{global}
Grant Assumption~\ref{assProc}(d) for $s>0$, for $V=\R$ and
for constants $c_t$ with $\sup_{t\in\R}c_t=c<\infty$.
Then
\[
\sqrt{n\Delta_n}\big(\hat{N}_{n}-N\big)\to^{\mathcal{L}}\mathbb{G}
\quad\text{in}\quad\ell^{\infty}(\R).
\]
 \item\label{away}
Grant Assumption~\ref{assProc}(d) for $s>0$, for $g_t(x)=x^{-2}f_t(x)$,
for $V=(-\infty,-\zeta]\cup[\zeta,\infty)$, $\zeta>0$, and for constants $c_t$
with $\sup_{t\in
V}c_t=c'<\infty$.
Then
\[
\sqrt{n\Delta_n}\big(\hat{\mathcal{N}}_{n}-\mathcal{N}\big)\to^{\mathcal{L}}
\mathbb{W}
\quad\text{in}\quad\ell^{\infty}(V).
\]
\end{enumerate}
\end{theorem}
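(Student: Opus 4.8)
The plan is to route both parts through the general high-frequency uniform central limit theorem for smoothed empirical processes (Theorem~\ref{main1}) together with the reduction of Section~\ref{secProc}; the work specific to this theorem is then (1) the linearisation of the ratio defining $\hat\psi_n''$, (2) showing the kernel-regularisation bias is $o((n\Delta_n)^{-1/2})$ in sup-norm, and (3) absorbing the pilot estimator $\hat\sigma^2$. Everything takes place on the event --- of probability tending to one, since $\log^4(1/\Delta_n)=o(n\Delta_n)$ forces $\sup_{|u|\le 1/h_n}|\phi_{\Delta,n}(u)-\phi_\Delta(u)|\to^P 0$ while $\inf_{|u|\le 1/h_n}|\phi_\Delta(u)|$ is bounded below --- on which $\hat\psi_n''$, and hence $\hat N_n,\hat{\mathcal N}_n$, are well defined. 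Using \eqref{eqIdent} in the form $-\psi''-\sigma^2=\F[x^2\nu]$, and writing $g_t$ for the weight in \eqref{eq:kernelEst} in Part~\ref{global} and $g_t(x)=x^{-2}f_t(x)$ in Part~\ref{away} (so that in both cases $g_t(x)\,x^2$ equals the integrand of $N$, resp.\ $\mathcal N$), one has the exact decomposition
\[
\hat N_n(t)-N(t)=\underbrace{\int g_t(x)\,\F^{-1}\!\big[\F[x^2\nu]\,(\F K_{h_n}-1)\big](x)\d x}_{=:b_n(t)}+\underbrace{\int g_t(x)\,\F^{-1}\!\big[\big(-(\hat\psi_n''-\psi'')-(\hat\sigma^2-\sigma^2)\big)\F K_{h_n}\big](x)\d x}_{=:S_n(t)},
\]
since $\int g_t(x)\,((x^2\nu)\ast K_{h_n})(x)\d x=N(t)+b_n(t)$, and likewise for $\hat{\mathcal N}_n-\mathcal N$.

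For the stochastic term $S_n$, linearise \eqref{eqIdent} around $\phi_\Delta$: one gets $\hat\psi_n''-\psi''=L_n+R_n$, where $L_n(u)$ is a linear combination --- with coefficients that are bounded rational expressions in $\phi_\Delta(u),\phi_\Delta'(u),\phi_\Delta''(u)$ and $(\Delta_n\phi_\Delta(u)^2)^{-1}$ --- of the centred fluctuations $\eta^{(j)}(u)=\frac1n\sum_{k=1}^n\big((iX_k)^j e^{iuX_k}-\phi_\Delta^{(j)}(u)\big)$ for $j=0,1,2$, and $R_n$ is at least quadratic in the $\eta^{(j)}$. Substituting the $L_n$-part into $g_t\mapsto\int g_t\,\F^{-1}[\,\cdot\,\F K_{h_n}]$ and carrying out the inverse Fourier transform turns $S_n$ into a smoothed empirical process of the form handled in Section~\ref{secProc}, namely $S_n(t)=\frac1{n\Delta_n}\sum_{k=1}^n\big(w_{t,n}(X_k)-\E w_{t,n}(X_k)\big)+(\text{remainder})$ with weight functions $w_{t,n}$ converging, in the sense Theorem~\ref{main1} requires, to $(1\wedge x^2)\1_{(-\infty,t]}(x)$ in Part~\ref{global} and to $f_t(x)$ in Part~\ref{away}; this is exactly why the limiting covariances match those of the linear estimators $\tilde N_n,\tilde{\mathcal N}_n$. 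Assumptions~\ref{assProc}(a)--(d) supply the uniform Hölder- and envelope-bounds on the class $\{w_{t,n}:t\in V\}$ needed to invoke Theorem~\ref{main1}, which then yields $\sqrt{n\Delta_n}\,S_n\to^{\mathcal L}\mathbb G$ in $\ell^\infty(\R)$, resp.\ $\sqrt{n\Delta_n}\,S_n\to^{\mathcal L}\mathbb W$ in $\ell^\infty(V)$. The term $\hat\sigma^2-\sigma^2$ enters $S_n(t)$ only through the bounded factor $\int g_t(x)K_{h_n}(x)\d x$, so by \eqref{eq:sigmarate} its contribution is $o_P((n\Delta_n)^{-1/2})$ uniformly in $t$.

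For the bias we must show $\sqrt{n\Delta_n}\,\|b_n\|_\infty\to0$. Writing $H_t:=g_t(-\cdot)\ast(x^2\nu)$, Fubini gives $b_n(t)=\int \tilde K_{h_n}(y)\big(H_t(y)-H_t(0)\big)\d y$ with $\tilde K_{h_n}(y):=K_{h_n}(-y)$, using $\int K=1$. Since $K$ has vanishing moments of orders $1,\dots,p$ with $p\ge s$, $x^{p+1}K\in L^1(\R)$, and $\|H_t\|_{C^s(U)}\le c_t\le c$ on the neighbourhood $U$ of the origin from Assumption~\ref{assProc}(d) (near which $\tilde K_{h_n}$ concentrates), a standard kernel-approximation estimate gives $\|b_n\|_\infty\lesssim h_n^{s}$. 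With $h_n\sim\Delta_n^{1/2}$ this is $\lesssim\Delta_n^{s/2}$, hence $\sqrt{n\Delta_n}\,\|b_n\|_\infty\lesssim\sqrt n\,\Delta_n^{(s+1)/2}\to0$ precisely because $\Delta_n=o(n^{-1/(s+1)})$. Note that the diffusion enters the estimator only through the constant $-\hat\sigma^2$, which is removed by the subtraction and does not appear in $b_n$; this is what makes $\sigma\neq0$ admissible here, in contrast to Theorem~\ref{thm:UCLTnaive}.

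\textbf{The main difficulty} is to control the remainder $R_n$ --- and, as a prerequisite, to keep $\inf_{|u|\le 1/h_n}|\phi_{\Delta,n}(u)|$ bounded away from zero --- \emph{uniformly over the expanding frequency band} $|u|\le h_n^{-1}\sim\Delta_n^{-1/2}$, and then to show that this error is still negligible in $\ell^\infty$ after integration against $g_t$. The choice $h_n\sim\Delta_n^{1/2}$ is precisely what makes this feasible: it keeps $\Delta_n u^2\lesssim 1$ on the band, so that, by \eqref{eqSecMom}, $|\phi_\Delta(u)|=e^{\Delta_n\Re\psi(u)}\ge e^{-\tfrac12(\sigma^2+\int_{\R}x^2\nu(\d x))}>0$ there uniformly in $u$ and in $\Delta_n$; combined with a concentration bound for the empirical characteristic function and its first two derivatives on this growing interval (valid under Assumptions~\ref{assProc}(a)--(c), with the $(4+\eps)$-moment condition entering through a truncation argument), $R_n$ contributes $o_P((n\Delta_n)^{-1/2})$ to $\sqrt{n\Delta_n}\,\hat N_n(t)$ uniformly in $t$ --- one of the places the condition $\log^4(1/\Delta_n)=o(n\Delta_n)$ is used, the other being the maximal inequality underlying Theorem~\ref{main1}, where the rescaled envelope grows like $\Delta_n^{-1/2}$. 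Assembling the three contributions, $\sqrt{n\Delta_n}(\hat N_n-N)=\sqrt{n\Delta_n}\,S_n+o_P(1)\to^{\mathcal L}\mathbb G$ in $\ell^\infty(\R)$; the same argument with $g_t=x^{-2}f_t$, $V=(-\infty,-\zeta]\cup[\zeta,\infty)$, and Assumption~\ref{assProc}(d) for these $g_t$ gives Part~\ref{away}.
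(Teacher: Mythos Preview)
Your proposal is correct and follows essentially the same route as the paper: the three-term decomposition \eqref{eqErrorDecomp}, the bias bound via Proposition~\ref{propBias}, the linearisation (Proposition~\ref{propRem}) reducing the stochastic term to the smoothed empirical process $M_{\Delta,n}$ of \eqref{Mproc}, and finally Theorem~\ref{main1}. One refinement worth noting: the paper does not keep all three linear fluctuations $\eta^{(j)}$, $j=0,1,2$, in the main stochastic term; rather, the proof of Proposition~\ref{propRem} shows that the $j=0,1$ contributions are themselves $o_P((n\Delta_n)^{-1/2})$ (using $|(\phi_\Delta^{-1})'|\lesssim\Delta^{1/2}$ and $|(\phi_\Delta^{-1})''|\lesssim\Delta$ on the band $|u|\le h_n^{-1}$), so that only the $j=2$ term survives with the clean multiplier $m=\F K_{h_n}/\phi_{\Delta_n}$. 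Assumption~\ref{multass}(b) is then verified for this specific $m$ in Proposition~\ref{multver}, and it is this verification --- rather than H\"older or envelope bounds on a weight class $\{w_{t,n}\}$ --- that licenses the application of Theorem~\ref{main1}.
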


\subsection{Limit process and statistical applications}

The continuous mapping theorem with the usual sup-norm
$\|\cdot\|_\infty$ combined with Theorems~\ref{thm:UCLTnaive} and \ref{thmUniform} yields
in particular the limit theorems, as $n \to \infty$,
\begin{equation}
\sqrt{n \Delta} \|\tilde N_n -N\|_\infty \to^{\mathcal L} \|\mathbb
G\|_\infty
\quad
\text{ and }
\quad
\sqrt{n \Delta} \|\hat N_n -N\|_\infty \to^{\mathcal L} \|\mathbb
G\|_\infty.
\end{equation}
This can be used to construct Kolmogorov--Smirnov tests for L\'evy measures and
global confidence bands for the function $N$, as we explain now.

For absolutely continuous L\'evy measures $\nu$ the Gaussian random function $(\mathbb G(t):t\in\R)$ can be realised as a version of
\begin{align}
 \mathbb
G(t)&=\mathbb B \left(\int_{-\infty}^t(1\wedge x^4)\nu(\d
x)\right), ~t \in \R,
\end{align}
where $\mathbb B$ is a standard Brownian motion.
An alternative representation is given by $\mathbb
G(t)=\int_{-\infty}^{t}(1\wedge x^2) \nu(x)^{1/2}\d \mathbb B(x)$, where $\mathbb B$ is a
two-sided Brownian motion.
We have
\begin{align*}
 \PP\left(\sup_{s\le t}|\mathbb G(s)|\ge
a\right)
=\PP\left(\left(\int_{-\infty}^t(1\wedge x^4)\nu(\d
x)\right)^{1/2}\max_{s\in[0,1]}|\mathbb B(s)|\ge a\right),~~~ t \in \R \cup
\{\infty\},
\end{align*}
so that quantiles of the distribution of $\|\mathbb G\|_\infty$ can
be calculated. For example a global asymptotic confidence band
for $N$ can be constructed in the setting of Theorems~\ref{thm:UCLTnaive}
and~\ref{thmUniform} by defining
\begin{align*}
\tilde C_n(t)&:=\left[\tilde N_n(t)-\frac{\tilde d q_{\alpha}}{\sqrt {n \Delta}}, \tilde N_n(t)+\frac{\tilde d
q_{\alpha}}{\sqrt {n \Delta}}\right],~~
\hat C_n(t):=\left[\hat N_n(t)-\frac{\hat d q_{\alpha}}{\sqrt {n \Delta}}, \hat N_n(t)+\frac{\hat d q_{\alpha}}{\sqrt {n\Delta}}\right], ~~ t \in \R,
\end{align*}
with consistent estimators
\begin{align*}
\tilde d&:=\left(\frac{1}{n\Delta}\sum_{k=1}^{n}(1\wedge
X_k^4)\right)^{1/2},\\
\hat d &:=
\left(\int_{\R}(x^{-2}\wedge
x^2)\F^{-1}\left[(-\hat\psi''_n-\hat\sigma^2)\mathcal
F K_h \right](x)\d x\right)^{1/2}
\end{align*}
of the standard deviation $(\int_{\R}(1\wedge x^4)\nu(\d x))^{1/2}$,
and with $q_\alpha$ the upper $\alpha$--quantile, $0<\alpha<1$,
of the distribution of $\max_{s\in[0,1]}|\mathbb B(s)|$ (see Example~X.5(c) in
\cite{feller1971} for its well-known formula).
For the confidence band $C_n$ equal to either
$$\tilde C_n := \left\{f: f(t) \in \tilde C_n(t) ~ \forall t \in \R\right\},
~\text{ or }~\hat C_n := \left\{f: f(t) \in \hat C_n(t) ~ \forall t \in
\R\right\},$$
Theorems~\ref{thm:UCLTnaive} and~\ref{thmUniform} imply, under the respective assumptions, that the asymptotic coverage probability of $C_n$ equals
\begin{align*}
 \lim_{n\to\infty}\PP\left(N(t)\in C_n(t) \; \forall
t\in\R\right)=1-\alpha.
\end{align*}
Theorems~\ref{thm:UCLTnaive} and~\ref{thmUniform} allow likewise the
construction of tests: If $H_0$ is a set of L\'evy measures, let $\mathcal D$ be the set of the corresponding cumulative distribution functions
of the form~\eqref{eq:cdf}. We define $T_n=\1\{\mathcal D\cap
C_n=\varnothing\}$ to reject $H_0$ and accept $H_0$ when $T_n=0$. This test has
 asymptotic level $\alpha$:  if $\PP_\theta$ is the law of a L\'evy process from $\theta \in H_0$ then we have
\begin{align*}
 \lim_{n\to\infty}\PP_\theta(T_n\neq0)\le\alpha,
\end{align*}
assuming $H_0$ satisfies the assumptions of Theorem~\ref{thm:UCLTnaive} or
\ref{thmUniform}.

\subsection{Numerical example}

Let us briefly illustrate the finite sample performance of the two estimation approaches and their corresponding  confidence bands. We apply the procedures to two standard examples of pure jump L\'evy processes: a Gamma process and a normal inverse Gaussian (NIG) process. The empirical coverage of the confidence bands reveals the finite sample level of the associated Kolmogorov--Smirnov test and the size of the bands indicates the power of the test.

The Gamma process has infinite, but relatively small jump activity (its Blumenthal-Getoor index equals zero). Its L\'evy measure  is given by the Lebesgue density
$ \nu(x)=\frac{c}{x}e^{-\lambda x}, x>0,$
and we choose $c=30$ and $\lambda=1$ here. The NIG process can be constructed by subordinating a diffusion with volatility $s>0$ and drift $\theta\in\R$ by an inverse Gaussian process with variance $\kappa>0$. The resulting infinite variation process has  Blumenthal-Getoor index equal to one. The NIG process admits an explicit formula for the jump measure and for its law we apply the simulation algorithm from \cite{contTankov2004}, choosing $s=1.5, \theta=0.1$ and $\kappa=0.5$. Both processes satisfy the assumptions of Theorems~\ref{thm:UCLTnaive} and \ref{thmUniform}, cf. Section~\ref{secExamples}.

\begin{figure}[t]\centering
 \includegraphics[width=7.5cm,height=4cm]{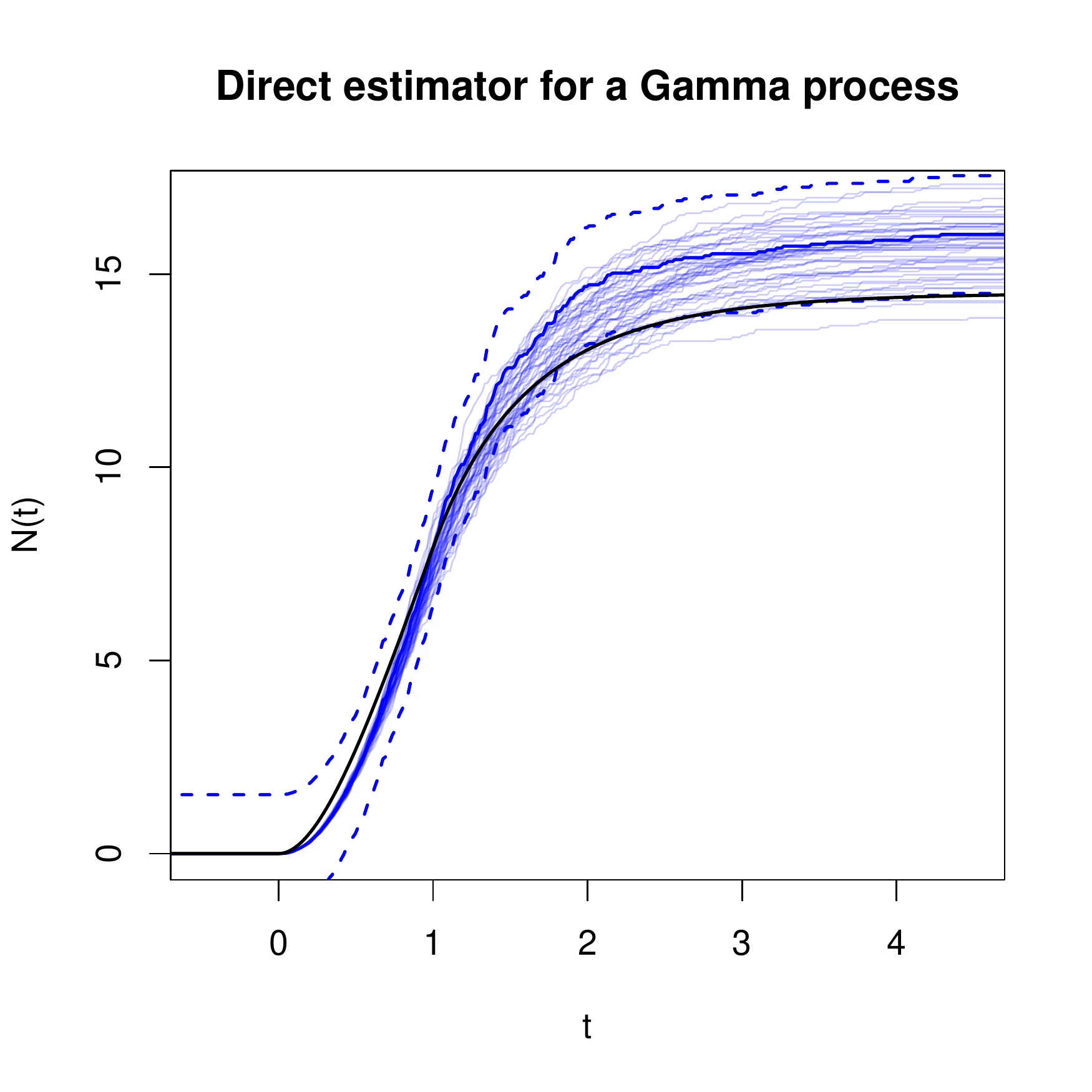}
 \includegraphics[width=7.5cm,height=4cm]{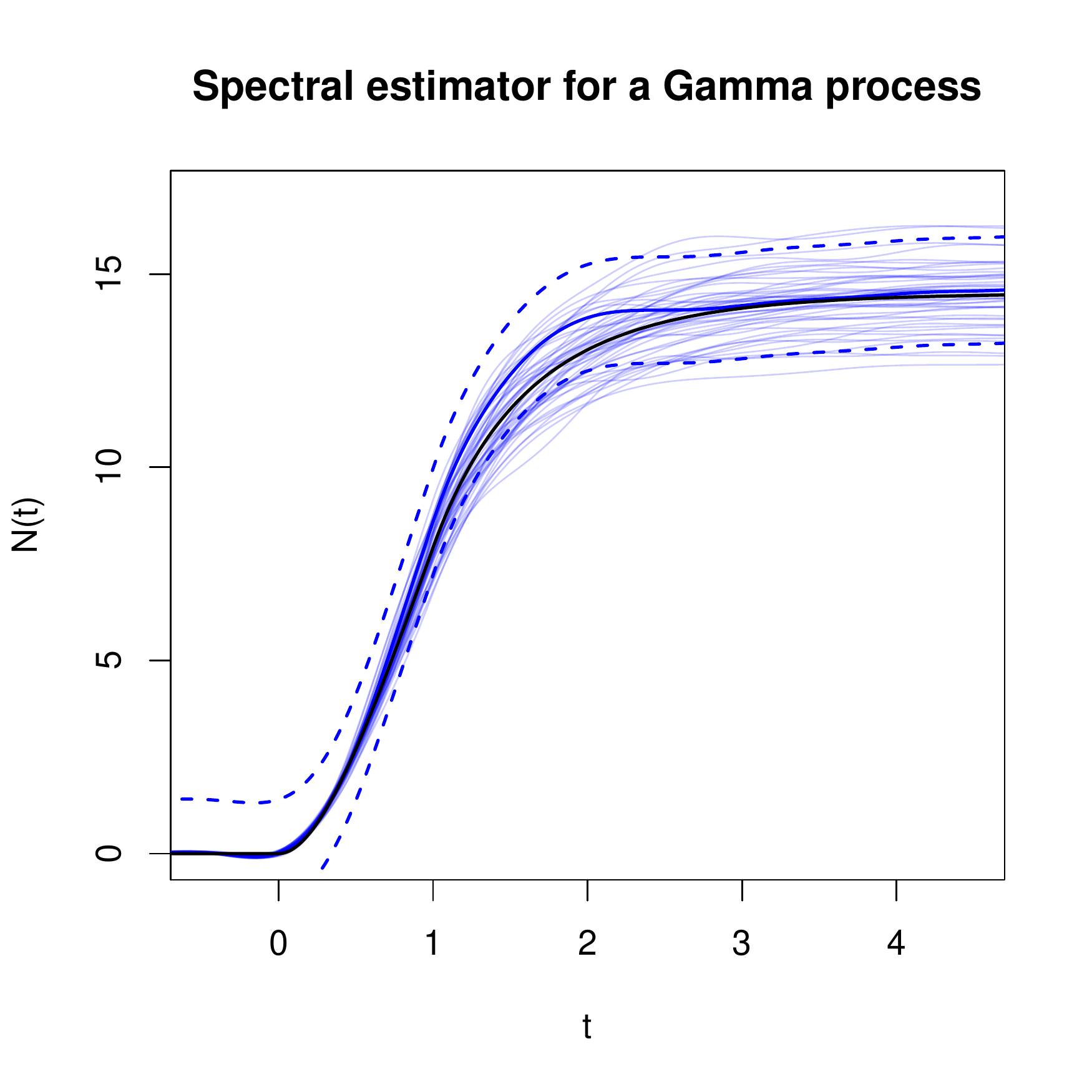}
 \hfill
 \includegraphics[width=7.5cm,height=4cm]{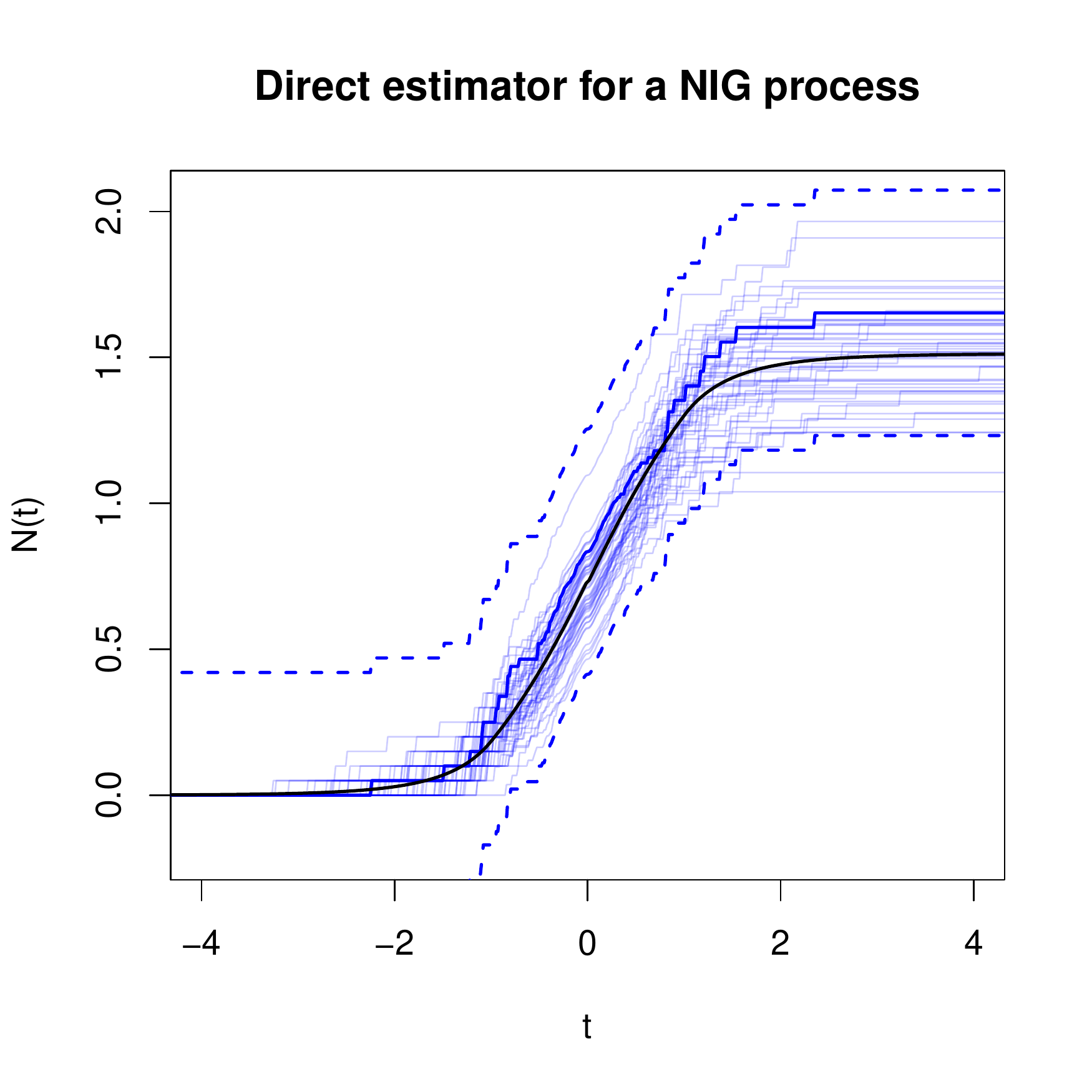}
 \includegraphics[width=7.5cm,height=4cm]{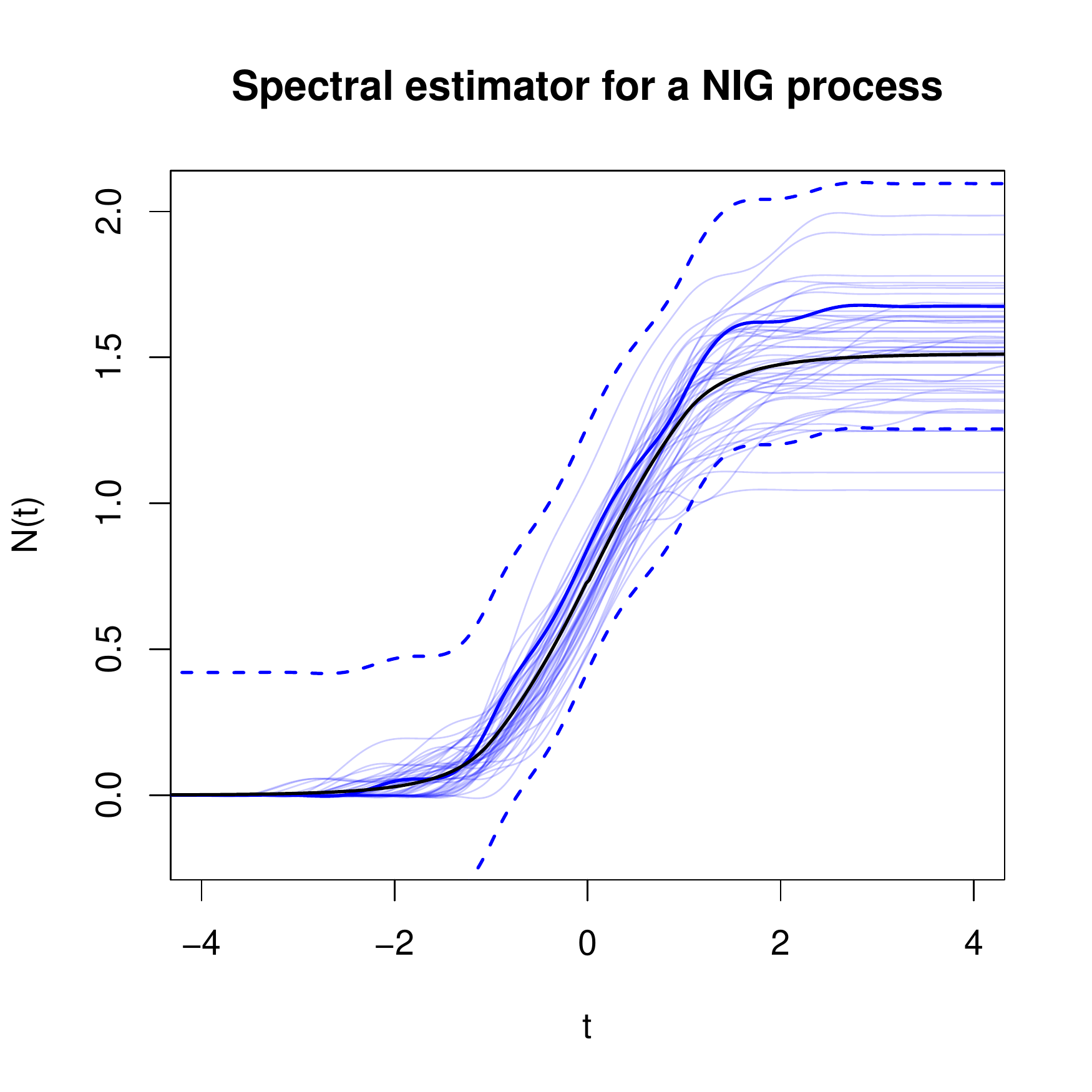}
  \caption{Direct estimator $\tilde N_n$ (\emph{left}) and spectral estimator $\hat N_n$ (\emph{right}) for the Gamma  (\emph{top}) and  NIG process (\emph{bottom}). Each time 50 estimators  (\emph{light blue}) and the true distribution function (\emph{black}) are shown. One estimator (\emph{blue, solid}) with its asymptotic 0.9-confidence band (\emph{blue, dashed}) is highlighted.}\label{fig:sim}
\end{figure}

We simulate $n=2000$ increments with observation distance $\Delta=0.01$. For the spectral estimator we apply a flat top kernel and the universal bandwidth choice $h=\sqrt \Delta$ which turned out to perform well in a variety of settings. Figure~\ref{fig:sim} shows the true distribution-type function $N$, the direct estimator $\tilde N_n$ from \eqref{eq:naiveEst} and the spectral estimator $\hat N_n$ from \eqref{eq:kernelEst} for 50 simulations. In each setting the confidence band for level $\alpha=0.9$, as constructed in the previous section, is plotted for the first simulation result. We clearly see the higher activity of small jumps of the NIG process from the linear growth of  $N$ at zero. On the other hand, the choice of our process parameters yields more pronounced tails of the jump measure for the gamma process.

By construction, the direct estimator is not smooth. For the Gamma process it possesses a significant bias. The intensity of the small jumps is systematically underestimated which results in an overestimation of the larger jumps and thus too large values of $\tilde N_n(t)$ for $t$ large. For the choice $\Delta=0.001$ this bias of the direct estimator is already negligible. In the simulations of the NIG process, $\tilde N_n$ achieves good results that coincide with the asymptotic theory.  In the simulations for the Gamma process the empirical coverage of the $\alpha=0.9$ confidence bands in 500 Monte Carlo iterations is 0.86 for the Gamma process. The direct estimator has an empirical coverage of 0.59, reflecting the bias problem mentioned above. For the NIG process both estimators yield bands covering the true $N$ uniformly in $92\%$ of cases.

\section{Unifying empirical process} \label{secProc}

The key probabilistic challenge in the proofs of Theorems \ref{thm:UCLTnaive} - \ref{thmUniform} is a uniform central limit theorem for certain smoothed empirical processes arising from the sampled increments (\ref{data}). We show in this section how these processes arise naturally for both estimation approaches considered here.

\smallskip

We will consider slightly more general objects than the distribution function $N(t) = \int_{-\infty}^t  (1 \wedge x^2)  \nu(\d x)$ -- the truncation at one in $(1 \wedge x^2)$ is somewhat arbitrary and, in
particular, not smooth. Other truncations such as $x^2/(1+x^2)$, or
variations thereof can be of interest. To accommodate such examples we thus consider recovery of the functionals
\begin{equation}
N_\rho(t) = \int_{-\infty}^t \rho(x) x^2\nu(\d x), ~~ t \in \R,
\end{equation}
where the `clipping function' $\rho$ satisfies the following condition:
\begin{assumption} \label{clip}
The function $\rho$ satisfies $0< \rho(x) \le C(1 \wedge x^{-2})$ for all $x
\in \R$ and some constant $0<C<\infty$. Moreover, $\rho, x\rho$ are Lipschitz
continuous functions of bounded variation (i.e., their weak derivative is equal
to a finite signed measure).
\end{assumption}
This covers the above examples (with either $\rho(x) = 1 \wedge x^{-2}$ or
$\rho(x) = 1/(1+x^2)$). In the definition of the basic estimator
\eqref{eq:naiveEst} and the kernel estimator~\eqref{eq:kernelEst}, we only need to replace
$(1\wedge x^2)\1_{(-\infty,t]}(x)$ by $x^2g_t(x)$ where now
\begin{equation} \label{grho}
g_t(x):=\rho(x)\1_{(-\infty,t]}(x),
\end{equation}
replacing also $g_t$ in Assumption~\ref{assProc}. The covariance of the limit process in Theorems~\ref{thm:UCLTnaive}
and~\ref{thmUniform} then changes to
\[\E[\mathbb{G}(s)\mathbb{G}(t)]=\int_{\R}x^{4}g_s(x)g_t(x)\nu(\d x)\]
and the according representation of $\mathbb G$ in terms of a reparametrised
Brownian motion is
\begin{align}\label{eqBtransformed}
 \mathbb
G(t)=\mathbb B\left(\int_{\R}x^4g_t^2(x)\nu(\d
x)\right)=\mathbb B\left(\int_{\R}x^4\rho^2(x)\1_{(-\infty,t]}(x)\nu(\d
x)\right).
\end{align}

Let us turn to the main purpose of this section: We start with the direct estimator $\tilde N_n$, which is easier to analyse.
The estimation error of $\tilde{N}_{n}$ can be decomposed as follows
\begin{align}
\tilde{N}_{n}(t)-N_{\rho}(t)&=
\int_{\R}x^{2}g_t(x)\big(\Delta^{-1}\PP_{
\Delta,n}
(\mathrm{d}x)-\nu(\mathrm{d}x)\big)\nonumber \\
&=  \int_{\R}x^2g_t(x)
\big(\Delta^{-1}\PP_{\Delta}(\mathrm{d}x)-
\nu(\mathrm{d}x)\big)
+\int_{\R}g_t(x)
\frac{x^{2}}{\Delta}(\PP_{\Delta,n}-\PP_{\Delta})(\d
x)\nonumber \\
&=:  B(t)+S(t),\label{eq:decomp}
\end{align}
for any $t\in\R$. The first term $B$ is a deterministic approximation error
and the rough idea for controlling it is to view $\PP_\Delta$ as an approximate
identity and to use
similar arguments as for the approximation error of a kernel estimator.
The second term $S$ is the main stochastic error term driven by the empirical
process
\begin{equation} \label{naivproc}
 \sqrt{n\Delta}\left(\frac{x^{2}}{\Delta}\PP_{\Delta,n}-\frac{x^{2}}{\Delta}\PP_
{
\Delta }
\right)=\sqrt{\frac{n}{\Delta}}x^{2}\big(\PP_{\Delta,n}-\PP_{\Delta}\big),
\end{equation}
where the scaling follows from the intuitive observation that the $X_k$'s are
drawn i.i.d.~from law $\PP_{\Delta}$ and hence satisfy, using that $\PP_\Delta$
is an infinitely divisible distribution, $$\Var\left(\sum_{k=1}^n X_k\right) =
\Var (L_{n \Delta}) = n \Delta \Var (L_1).$$
Turning our attention to the second estimator we decompose $\hat N_n-N_\rho$
into three error terms, using \eqref{eqIdent}:
\begin{align}
  \hat N_n(t)-N_\rho(t)
  &=
\int_{\R}(g_t(x)\F^{-1}\left[(-\hat\psi''_n-\hat\sigma^2)\mathcal
F K_h \right](x)\d x-x^2 g_t(x)\nu(\d x))\notag\\
  &=\int_{\R} g_t(x)\big(K_h\ast\big(y^2\nu(\d y)\big)-x^2\nu\big)(\d
x)\label{eqErrorDecomp}\\
  &\quad+\int_{\R} g_t(x)\F^{-1}\Big[\mathcal F
K_h(u)\big(\psi''(u)-\hat\psi_n''(u)\big)\Big](x)\d x\notag\\
  &\quad+(\sigma^2-\hat\sigma^2)\int_{\R}g_t(x)K_h(x)\d x.\notag
\end{align}
The first term is a deterministic approximation error, which can be bounded by
Assumption~\ref{assProc}(d) on the smoothness. The last term will be negligible since we assume
that $\hat\sigma^2$ converges to $\sigma^2$ with a faster rate than $1/\sqrt{n\Delta}$.
The key stochastic term is the second one. Compared to the basic estimator
$\tilde N_n$ we face the additional difficulty that $\hat \psi_n''$ depends
nonlinearly on $\PP_{\Delta,n}$. The following result shows that even after
linearisation the resulting term is still different from the basic process
$\tilde N_n -\E \tilde N_n$ in that it performs a division by $\phi_\Delta$ in
the spectral domain.

\begin{proposition}\label{propRem}
Grant Assumptions~\ref{assProc}(a) and assume $\sup_{u\in[-1/h_n,1/h_n]}|\phi_{\Delta_n}(u)|^{-1}\lesssim1$ for some $h_n \to 0, \Delta_n \to 0$. Let the function
$$m(u):=\frac{\mathcal F K(h_nu)}{\phi_{\Delta_n}(u)}$$ satisfy uniformly for
$h_n,\Delta_n\to0$, $\|m\|_\infty\lesssim 1$  (valid for $K$ as in
(\ref{propKernel}) and $h_n \sim \sqrt {\Delta_n}$). If $n\Delta_n\to\infty$ and
$h_n\to 0$ with $h_n\gtrsim \Delta_n^{1/2}$, then we
have
\[\int_{\R} g_t(x)\F^{-1}\Big[\mathcal F
K_{h_n}(u)(\psi-\hat\psi_n)''(u)\Big](x)\d
x=M_{\Delta,n}+o_P(1/\sqrt{n\Delta_n}),
\]
where
\begin{align}
   M_{\Delta,n}&:=-\Delta_n^{-1}\int_{\R} g_t(x){\cal
F}^{-1}[\phi_{\Delta_n}^{-1}(\phi_{\Delta_n,n}''-\phi_{\Delta_n}''){\cal
F}K_{h_n}](x)\d x\notag\\
&\phantom{:}= \int_{\R} g_t(x)
\left(\frac{x^2}{\Delta}(\PP_{\Delta,n}-\PP_{\Delta})\right)*\F^{-1}[m](\d
x).\label{Mproc}
\end{align}
\end{proposition}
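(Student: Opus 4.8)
The plan is to expand $\hat\psi_n''$ around $\psi''$ using the explicit formula $\psi'' = (\phi_\Delta''\phi_\Delta - (\phi_\Delta')^2)/(\Delta\phi_\Delta^2)$ from \eqref{eqIdent}, and to show that only the leading linear term survives after convolving against the band-limited kernel $\mathcal F K_{h_n}$ and applying $\int g_t(x)\mathcal F^{-1}[\cdot]\,\d x$. Writing $\hat\psi_n''-\psi'' = A_1 + R$ where $A_1$ collects the terms linear in the centred quantities $\phi_{\Delta,n}^{(i)}-\phi_\Delta^{(i)}$ ($i=0,1,2$) and $R$ is the remainder, I expect $A_1$ to produce, after multiplication by $\mathcal F K_{h_n}$ and inversion, essentially $-\Delta^{-1}\phi_\Delta^{-1}(\phi_{\Delta,n}''-\phi_\Delta'')\mathcal F K_{h_n}$ up to further terms of lower order. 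Concretely, from $\psi'' = \Delta^{-1}\big(\phi_\Delta''/\phi_\Delta - (\phi_\Delta'/\phi_\Delta)^2\big)$ one gets a linearisation in which the dominant contribution is $\Delta^{-1}\phi_\Delta^{-1}(\phi_{\Delta,n}''-\phi_\Delta'')$; the contributions from $\phi_{\Delta,n}'-\phi_\Delta'$ and $\phi_{\Delta,n}-\phi_\Delta$ carry extra factors of $\phi_\Delta'/\phi_\Delta$ and $\phi_\Delta''/\phi_\Delta$ which, after the change $\psi' = \Delta^{-1}\phi_\Delta'/\phi_\Delta$, are $O(1)$ smooth multipliers, and the resulting process will be shown to be $o_P(1/\sqrt{n\Delta_n})$ by the same variance bounds used for the leading term together with an extra $\Delta_n^{1/2}$ gain. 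The identity in \eqref{Mproc} rewriting $M_{\Delta,n}$ as a convolution of the basic empirical process with $\mathcal F^{-1}[m]$ is then just Plancherel/Fubini: $\mathcal F[x^2(\PP_{\Delta,n}-\PP_\Delta)] = -(\phi_{\Delta,n}''-\phi_\Delta'')$, so $\mathcal F^{-1}[\phi_\Delta^{-1}(\phi_{\Delta,n}''-\phi_\Delta'')\mathcal F K_{h_n}] = -\big(x^2(\PP_{\Delta,n}-\PP_\Delta)\big)\ast\mathcal F^{-1}[m]\cdot(-1)$ with the $\Delta^{-1}$ pulled through.

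The estimates I would use throughout are: (i) on the spectral window $|u|\le 1/h_n$, $|\phi_{\Delta_n}(u)|^{-1}\lesssim 1$ and $\|m\|_\infty\lesssim 1$ by hypothesis; (ii) moment bounds $\E|\phi_{\Delta,n}^{(i)}(u)-\phi_\Delta^{(i)}(u)|^2 \lesssim n^{-1}\E|X_k|^{2i}$, which are uniform in $u$ and finite by Assumption~\ref{assProc}(a) (this is where the $4+\eps$ moment enters, covering $i=2$); (iii) the deterministic smoothness of $\psi' = \Delta_n^{-1}\phi_\Delta'/\phi_\Delta$ and its derivatives, which are $O(1)$ as $\Delta_n\to 0$ because $\phi_\Delta'/\phi_\Delta = \Delta\psi' \to 0$ at the rate $\Delta_n$ while its normalisation by $\Delta_n^{-1}$ stays bounded — here finiteness of moments of $\nu$ gives boundedness of $\psi', \psi'', \psi'''$ on any fixed interval, hence on $[-1/h_n,1/h_n]$ once one controls the polynomial growth against the compact support of $\mathcal F K$. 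I would bound the remainder $R$ by its quadratic structure: a typical term is a product $(\phi_{\Delta,n}^{(i)}-\phi_\Delta^{(i)})(\phi_{\Delta,n}^{(j)}-\phi_\Delta^{(j)})$ times a bounded multiplier, whose $L^1$-in-$u$ norm over $[-1/h_n,1/h_n]$ is $O_P(h_n^{-1}\cdot n^{-1}) = O_P((n h_n)^{-1})$; since $h_n\sim\Delta_n^{1/2}$ and $n\Delta_n\to\infty$, this is $o_P(\Delta_n^{-1/2}\cdot(n\Delta_n)^{-1/2}) = o_P(1/\sqrt{n\Delta_n})$, using $\|g_t\|_{L^1}\lesssim 1$ uniformly in $t$ (from Assumption~\ref{clip}, $\rho\le C(1\wedge x^{-2})$) to pass from the $\mathcal F^{-1}$ back to the spatial integral against $g_t$.

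The main obstacle I anticipate is twofold. First, making the uniformity in $t$ rigorous: the bounds above control each fixed $t$, but the statement is an identity of processes (the $o_P$ is presumably meant in a norm strong enough for the subsequent Donsker argument), so I would either phrase the remainder bound in $\ell^\infty(\R)$ directly — exploiting that $g_t(x) = \rho(x)\1_{(-\infty,t]}(x)$ varies in $t$ only through a monotone indicator, so $\sup_t|\int g_t \mathcal F^{-1}[\cdot]|\le \int (1\wedge x^{-2})|\mathcal F^{-1}[\cdot]|$ which removes $t$ entirely — or defer full uniformity to where Proposition~\ref{propRem} is invoked. Second, and more delicate, is controlling $\psi''-\hat\psi_n''$ uniformly on the growing window $[-1/h_n,1/h_n]$ rather than pointwise: one needs $\sup_{|u|\le 1/h_n}|\phi_{\Delta,n}(u)-\phi_\Delta(u)|$ small enough that the Taylor expansion of $1/\phi_{\Delta,n}$ around $1/\phi_\Delta$ is valid with the remainder still $o_P(1/\sqrt{n\Delta_n})$ after integration; this requires a uniform (in $u$) concentration inequality for the empirical characteristic function and its derivatives on an interval of length $\sim\Delta_n^{-1/2}$, which is exactly where the hypothesis $\log^4(1/\Delta_n) = o(n\Delta_n)$ (appearing in the main theorems) would be used via a chaining or bracketing bound, giving $\sup_{|u|\le 1/h_n}|\phi_{\Delta,n}^{(i)} - \phi_\Delta^{(i)}| = O_P(\sqrt{\log(1/h_n)/n})$.
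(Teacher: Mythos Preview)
Your overall strategy---linearise $\hat\psi_n''-\psi''$, isolate the leading term $\Delta^{-1}\phi_\Delta^{-1}(\phi_{\Delta,n}-\phi_\Delta)''$, and bound the remainder on the spectral window---is exactly the paper's approach. The paper carries this out by writing $\hat\psi_n''-\psi''=-\Delta^{-1}(\log(1+\eta))''$ with $\eta=(\phi_{\Delta,n}-\phi_\Delta)/\phi_\Delta$, expanding $(\log(1+\eta))''=\eta''+O(\|\eta\|_\infty\|\eta''\|_\infty+\|\eta'\|_\infty^2)$ on the event $\{\|\eta\|_\infty\le 1/2\}$, and then splitting $\eta''$ via the product rule to peel off the two secondary linear terms $(\phi_\Delta^{-1})'(\phi_{\Delta,n}-\phi_\Delta)'$ and $(\phi_\Delta^{-1})''(\phi_{\Delta,n}-\phi_\Delta)$. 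The uniform concentration you anticipate in your final paragraph is exactly what is used, taken from Kappus--Rei\ss{} (2010): $\|(\phi_{\Delta,n}-\phi_\Delta)^{(k)}\|_{\ell^\infty[-h^{-1},h^{-1}]}=O_P(n^{-1/2}\Delta^{(k\wedge1)/2}(\log h^{-1})^{(1+\delta)/2})$.

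There are, however, two concrete gaps in your bookkeeping that would prevent the argument from closing. First, the remainder in $\hat\psi_n''-\psi''$ is \emph{not} a product of centred quantities times a multiplier that is bounded uniformly in $\Delta$: it carries the factor $\Delta^{-1}$ from $\psi=\Delta^{-1}\log\phi_\Delta$, and the secondary linear terms involve $(\phi_\Delta^{-1})'=-\Delta\psi'\phi_\Delta^{-1}$ where $|\psi'(u)|\lesssim 1+|u|$ grows up to $h^{-1}\sim\Delta^{-1/2}$ on the window. Your claim that $\psi'$ is ``$O(1)$'' on $[-h^{-1},h^{-1}]$ is therefore incorrect, and the displayed chain $O_P((nh_n)^{-1})=o_P(\Delta_n^{-1/2}(n\Delta_n)^{-1/2})=o_P(1/\sqrt{n\Delta_n})$ contains both the missing $\Delta^{-1}$ and an algebra slip in the last equality.

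Second, and more structurally, your final step---bounding $\big|\int g_t\,\mathcal F^{-1}[\,\mathcal F K_h\cdot(\text{remainder})\,]\big|$ by $\|g_t\|_{L^1}\|\mathcal F K_h\cdot(\text{remainder})\|_{L^1}$---is too crude to close the estimate under only $n\Delta\to\infty$. The paper instead uses Plancherel and Cauchy--Schwarz,
\[
\Big|\int g_t\,\mathcal F^{-1}[\,\mathcal F K_h\cdot R\,]\Big|
\le \|\mathcal F g_t\|_{L^2}\,\|\mathcal F K_h\|_{L^2}\,\sup_{|u|\le h^{-1}}|R(u)|,
\]
and the point is that $\|\mathcal F K_h\|_{L^2}\sim h^{-1/2}$ whereas $\|\mathcal F K_h\|_{L^1}\sim h^{-1}$. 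Combined with the paper's sup-norm bound $\sup_{|u|\le h^{-1}}|R(u)|=o_P(n^{-1/2}\Delta^{-1/2}h^{1/2})$, the $L^2$ route gives $o_P((n\Delta)^{-1/2})$ as required, while your $L^1$ route would only give $o_P(n^{-1/2}\Delta^{-1/2}h^{-1/2})=o_P(n^{-1/2}\Delta^{-3/4})$, which is larger than $(n\Delta)^{-1/2}$ and does not suffice. This factor $h^{1/2}$ is precisely what makes the argument work, so the Plancherel step is not merely cosmetic.
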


We refer to $M_{\Delta,n}$ as the \emph{main
stochastic term}. To accommodate both (\ref{naivproc}) and (\ref{Mproc}) we now study empirical processes
\begin{equation} \label{genproc0}
\sqrt{\frac{n}{\Delta}}( x^2 (\PP_{\Delta, n}-\PP_{\Delta})) \ast \mathcal
F^{-1} m.
\end{equation}
for general  $(n,\Delta)$-dependent Fourier multipliers $m: \R \to \C$ satisfying the following condition.
\begin{assumption} \label{multass}
For every $n, \Delta$ the twice differentiable functions $m = m_{n, \Delta}: \R
\to \C$ are
either such that

(a) $\F^{-1}[m_{n, \Delta}]$, $\F^{-1}[m_{n, \Delta}']$ are
finite signed measures with uniformly bounded total variations, \newline
or such that

(b) $\F^{-1}[m_{n, \Delta}]$ is real-valued
and $m_{n, \Delta}$ is supported in $[-C\Delta^{-1/2}, C \Delta^{-1/2}]$ for
some fixed constant $C>0$.

\medskip

Moreover, letting $\Delta=\Delta_n \to 0$ as $n
\to
\infty$ we assume that $m_{n,\Delta} \to 1$ pointwise on $\R$, that
\[\|(1+|u|)^{k}m_{n, \Delta}^{(k)}\|_\infty\le c,
\quad k\in\{0,1,2\},\]
for some $0<c<\infty$ independent of $n, \Delta$ and that
\[\|m'_{n, \Delta}\|_{L^2}\to 0,\quad \Delta^{-1/2}\|m''_{n,
\Delta}\|_{L^2}\to0.\]
\end{assumption}
The above assumption is an adaptation of the usual Mikhlin-type Fourier
multiplier conditions to the situation
relevant here \citep[see][Cor. 4.11]{girardiWeis2003}.
It ensures that $m$, $m'$ act as norm-continuous Fourier multipliers on
suitable function spaces, which will be a key tool in our
proofs. Obviously Assumption \ref{multass} covers the case $m=1$ relevant in
(\ref{naivproc}) above. Moreover, we show in Proposition~\ref{multver} below
that it
also covers
$m=\mathcal F K_{(\Delta)}/\phi_\Delta$ under our conditions on $\phi_\Delta$
and $K_{(\Delta)}$,
where $K_{(\Delta)}$ denotes a kernel as in (\ref{propKernel}) with bandwidth depending on $\Delta$.
It includes other situations not studied further here, too, such as
smoothed empirical processes
based on $\PP_{\Delta, n}$ convolved with an approximate identity $K_h= h^{-1}
K(\cdot/h), h:=h_n \to 0, \int K=1,$ upon setting $m=\mathcal F K_h$.

With the definition of general $m=m_{n,\Delta}$ at hand we can now
unify the second term $S(t)$ in~\eqref{eq:decomp} and the main stochastic error
\eqref{Mproc}, and study the
smoothed
empirical process
\begin{align}
 \mathbb{G}_n(t)&:= \sqrt{n\Delta}\int_{\R} g_t(x)
\left(\frac{x^2}{\Delta}(\PP_{\Delta,n}-\PP_{\Delta})\right)*\F^{-1}[m](\d
x),\label{genproc}
\\
& = \sqrt{n \Delta}\int_{\R}\F^{-1}[m(-u)\F [g_t](u)](x)
\frac{x^2}{\Delta}(\PP_{\Delta,n}-\PP_\Delta)(\d x), ~t \in \R,\notag
\end{align}
the identity following from Fubini's theorem and standard properties of
Fourier transforms.

When $t$ is a fixed point in $\R$ and $m=1$ one shows without difficulty that, as $n \to \infty$,
$$\Var(\mathbb G_n(t)) \to \int_{\R}  x^4 g_t^2(x) \nu(\d x)$$ whenever
$\nu(\{t\})=0$. More generally one can show
convergence
of the finite-dimensional distributions of the process $(\mathbb G_n(t): t \in
\R)$ to the process $(\mathbb G(t): t \in
\R)$ from Theorem~\ref{thm:UCLTnaive}.
\begin{proposition} \label{fidi0}
Let $\Delta= \Delta_n \to 0$ in such a way that $n \Delta_n \to \infty$.
Suppose the L\'evy process satisfies Assumption~\ref{assProc}, that $\rho$ satisfies Assumption \ref{clip}, and that $m$ satisfies Assumption \ref{multass}.
Then as $n \to \infty$ we have, for any $t_1, \dots, t_k \in \R$, that
   \[
      \left[\mathbb G_n(t_1), \dots, \mathbb G_n(t_k) \right]
\rightarrow^{\mathcal L} \left[\mathbb G(t_1), \dots \mathbb G(t_k)\right].
   \]
 \end{proposition}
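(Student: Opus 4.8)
The plan is to establish convergence of the finite-dimensional distributions via the Cramér–Wold device, reducing the vector statement to a one-dimensional central limit theorem for a single triangular array of i.i.d.\ sums, and then to identify the limiting variance with the claimed covariance. Fix $t_1,\dots,t_k\in\R$ and $\lambda_1,\dots,\lambda_k\in\R$. Writing $h_t:=\F^{-1}[m(-\cdot)\F[g_t]]$, the second representation of $\mathbb G_n$ in \eqref{genproc} shows that
\[
\sum_{j=1}^k\lambda_j\mathbb G_n(t_j)=\sqrt{\frac{n}{\Delta}}\int_\R H(x)\,x^2(\PP_{\Delta,n}-\PP_\Delta)(\d x)=\frac{1}{\sqrt{n\Delta}}\sum_{k'=1}^n\Big(Z_{n,k'}-\E Z_{n,k'}\Big),
\]
where $H:=\sum_j\lambda_j h_{t_j}$ and $Z_{n,k'}:=H(X_{k'})X_{k'}^2$ with $X_{k'}\sim\PP_\Delta$. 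This is a normalised sum of centred i.i.d.\ random variables, and the natural tool is the Lindeberg–Feller CLT for triangular arrays: I would check (i) $\Var\big(\sqrt{n/\Delta}\,\Delta^{-1}\!\int H x^2(\PP_{\Delta,n}-\PP_\Delta)\big)=\Delta^{-1}\Var_{\PP_\Delta}(H(X)X^2)\to\int_\R H(x)^2x^4\nu(\d x)$, and (ii) the Lindeberg condition $\Delta^{-1}\E\big[H(X)^2X^4\1\{|H(X)X^2|>\eta\sqrt{n\Delta}\,\}\big]\to0$ for every $\eta>0$.

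For the variance computation (i), the key input is Lemma~\ref{weakcon}: since $\sigma=0$ is \emph{not} assumed here, one has $\Delta^{-1}x^2\PP_\Delta\to\sigma^2\delta_0+x^2\nu$ weakly, so $\Delta^{-1}\E[H(X)^2X^4]=\int H(x)^2 x^2\cdot x^2\Delta^{-1}\PP_\Delta(\d x)\to\sigma^2\,0^2H(0)^2+\int H(x)^2x^4\nu(\d x)=\int H^2x^4\nu(\d x)$, the $\delta_0$ mass being annihilated by the factor $x^2$ in front; meanwhile $\big(\Delta^{-1}\E[H(X)X^2]\big)^2=\Delta\big(\Delta^{-1}\E[H(X)X^2]\big)^2\to0$ because $\Delta^{-1}\E[H(X)X^2]=\int H(x)\,x^2\Delta^{-1}\PP_\Delta(\d x)$ stays bounded by the same weak-convergence argument. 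This requires $H$ to be bounded and continuous except possibly at the $t_j$, and that $\nu(\{t_j\})=0$; for a general $\nu$ one disposes of the atoms by an approximation argument, or (cleaner) one notes that Assumption~\ref{assProc}(b) gives $\nu$ a Lebesgue density so $\nu(\{t\})=0$ automatically. Boundedness and regularity of $h_t=\F^{-1}[m(-\cdot)\F g_t]$ follow from the multiplier Assumption~\ref{multass}: when $m=1$ one simply has $h_t=g_t$, and in general $m,m'$ act as bounded operators (with $\F^{-1}m$, $\F^{-1}m'$ finite signed measures in case (a), or via the compact-support/$L^2$ bounds in case (b)), so that $h_t$ is a bounded function of bounded variation, controlled uniformly in $n,\Delta$. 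Finally one expands $\sum_{i,j}\lambda_i\lambda_j\int h_{t_i}h_{t_j}x^4\nu$ and checks the cross terms match $\int x^4 g_{t_i}g_{t_j}\nu(\d x)$ in the limit — here one uses $m_{n,\Delta}\to1$ pointwise plus dominated convergence, so that $h_{t}\to g_t$ in a suitable sense and the limiting covariance is exactly $\E[\mathbb G(t_i)\mathbb G(t_j)]=\int_\R x^4 g_{t_i}(x)g_{t_j}(x)\nu(\d x)$ as in \eqref{eqBtransformed}.

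For the Lindeberg condition (ii) one uses a moment bound: by Assumption~\ref{assProc}(a), $\int|x|^{4+\eps}\nu(\d x)<\infty$, and the analogue $\Delta^{-1}\E|X|^{4+\eps}\to\int|x|^{4+\eps}\nu(\d x)<\infty$ holds (finite $(4+\eps)$-moments of $\PP_\Delta$ are equivalent to the same for $\nu$, by \cite{sato1999}, and the rescaled limit is again read off from the weak convergence); combined with $\|H\|_\infty<\infty$ this gives
\[
\frac1\Delta\E\big[H(X)^2X^4\1\{|H(X)X^2|>\eta\sqrt{n\Delta}\}\big]\lesssim \frac{1}{(n\Delta)^{\eps/2}}\cdot\frac1\Delta\E|X|^{4+\eps}\to0,
\]
since $n\Delta\to\infty$. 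Applying Lindeberg–Feller then yields $\sum_j\lambda_j\mathbb G_n(t_j)\to^{\mathcal L}\mathcal N\big(0,\sum_{i,j}\lambda_i\lambda_j\int x^4g_{t_i}g_{t_j}\nu\big)$, which by Cramér–Wold is exactly the asserted convergence $[\mathbb G_n(t_1),\dots,\mathbb G_n(t_k)]\to^{\mathcal L}[\mathbb G(t_1),\dots,\mathbb G(t_k)]$.

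The main obstacle I anticipate is not the CLT machinery itself but the careful bookkeeping to show that replacing $g_t$ by $h_t=\F^{-1}[m(-\cdot)\F g_t]$ does not change the limit: one must argue that the multiplier $m_{n,\Delta}$, which tends to $1$ only pointwise and whose inverse Fourier transform is a measure (not a function) in case (a), still lets $\int H(x)^2 x^4\Delta^{-1}\PP_\Delta(\d x)$ converge to $\int H^2 x^4\nu$ with $H$ built from the \emph{limiting} clipping functions $g_{t_j}$. Handling the convolution structure — i.e.\ that $\Delta^{-1}x^2(\PP_{\Delta,n}-\PP_\Delta)$ is convolved with $\F^{-1}m$ before integrating against $g_t$ — together with the weak (not strong) convergence in Lemma~\ref{weakcon} and the possible $\delta_0$-component from $\sigma^2$, is where the delicate uniform estimates from Assumption~\ref{multass} genuinely enter; everything else is a routine triangular-array argument.
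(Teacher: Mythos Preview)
Your overall plan is the same as the paper's: reduce to a one-dimensional CLT via Cram\'er--Wold, then verify a Lyapunov/Lindeberg-type condition for the triangular array $Z_{n,k}=H(X_k)X_k^2$ with $H=\sum_j\lambda_j h_{t_j}$. The paper likewise proves a scalar proposition for a general ``admissible'' $g$ (which covers finite linear combinations of the $g_{t_j}$) and applies Lyapunov.

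There is, however, a genuine gap in your verification of the Lindeberg condition. You write that ``boundedness \dots of $h_t$ follows from Assumption~\ref{multass}'' and then use $\|H\|_\infty<\infty$ as if it were uniform in $n$. In case~(a) of Assumption~\ref{multass} this is fine, but in case~(b) the multiplier $m$ is supported in $[-C\Delta^{-1/2},C\Delta^{-1/2}]$ and one only gets
\[
\|h_t\|_\infty\le\|m(-\cdot)\F g_t\|_{L^1}\lesssim\int_{|u|\le C\Delta^{-1/2}}(1+|u|)^{-1}\d u\sim\log(1/\Delta),
\]
which diverges. Your Lindeberg estimate then carries a factor $\|H\|_\infty^{2+\eps}/(n\Delta)^{\eps/2}$, and forcing this to zero would require something like $\log^{(4+2\eps)/\eps}(1/\Delta)=o(n\Delta)$, which is \emph{not} assumed in Proposition~\ref{fidi0} (only $n\Delta\to\infty$). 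The same issue affects your claim that $\Delta^{-1}\E[H(X)X^2]$ ``stays bounded by the same weak-convergence argument'': $H=H_n$ changes with $n$, so Lemma~\ref{weakcon} does not apply directly; one has to compute this expectation by hand (the paper does, and gets $O(\Delta^{1/2}\log(1/\Delta))$).

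The paper circumvents both problems. For the moment condition it uses Lyapunov and the inequality $|x|^{4+2\eps}\lesssim|1+ix|^{2+\eps}|x|^3$ together with Assumption~\ref{assProc}(c), i.e.\ $\|x^3\PP_\Delta\|_\infty\lesssim\Delta$, to pass to an $L^{2+\eps}$ norm of $\F^{-1}[m(-\cdot)\F g](x)(1+ix)$, which is then controlled by the Hausdorff--Young inequality and the decay of $\F g$ and $\F[xg]$; this yields $\E|Y_{n,k}|^{2+\eps}\lesssim\Delta^{-\eps/2}$ \emph{without} any power of $\|H\|_\infty$, so the Lyapunov ratio is $(n\Delta)^{-\eps/2}\to0$ under the sole hypothesis $n\Delta\to\infty$. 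For the variance (your obstacle in the last paragraph) the paper isolates the issue into a separate lemma showing
\[
\sup_{g}\int\Big(x^2\F^{-1}[m(-\cdot)\F g](x)-x^2g(x)\Big)^2\frac{\PP_\Delta}{\Delta}(\d x)\to0,
\]
proved by writing out $x^4\PP_\Delta$ via $\|x^3\PP_\Delta\|_\infty\lesssim\Delta$, Cauchy--Schwarz/Plancherel, and dominated convergence using $m\to1$ pointwise with $\|m\|_\infty,\|m'\|_\infty$ uniformly bounded. With this in hand the variance reduces to $\Delta^{-1}\int(x^2g)^2\PP_\Delta(\d x)\to\int x^4g^2\nu(\d x)$, which follows from Lemma~\ref{weakcon} and the Portmanteau lemma since $(x^2\vee1)g^2$ is of bounded variation. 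So your diagnosis of where the difficulty lies is correct, but the resolution hinges on exploiting $\|x^3\PP_\Delta\|_\infty\lesssim\Delta$ (Assumption~\ref{assProc}(c)) in a way your sketch does not yet do.
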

We remark that in this proposition we can omit $(1\wedge x^4)\nu\in
\ell^\infty(\R)$ from Assumption~\ref{assProc} as it is only needed
later in the proof of the tightness of the process $\mathbb{G}_n$.

By sample-continuity of Brownian motion, and
since the integral in \eqref{eqBtransformed} takes values in a fixed compact
set, we deduce that there exists a version of $(\mathbb G(t):t\in\R)$ with
uniformly
continuous sample paths for the intrinsic covariance metric
\[d^2(s,t)=\int_{\R}  x^4(g_t(x)-g_s(x))^2\nu(\d
x)=\int_{\R}  x^4\rho^2(x)\1_{(s\wedge t,s\vee t]}\nu(\d
x),\]
and that, moreover, $\R$ is totally bounded with respect to $d$. As a consequence we
obtain:
\begin{lemma} \label{levybrown}
Grant Assumption~\ref{clip}. For $g_t$ as in (\ref{grho}) and any L\'evy measure $\nu$, the law of the
centred Gaussian process
$\{\mathbb{G}(t):t\in\R\}$
with covariance
\[
\E[\mathbb{G}(t)\mathbb{G}(t')]=\int_{\R}x^4 g_t(x) g_{t'}(x) \nu(\d
x),\quad t,t'\in\R
\]
defines a tight Gaussian Borel random variable in $\ell^\infty(\R)$. In particular, there exists a
version of the process $(\mathbb G(t): t \in \R)$ such that $$\sup_{t \in \R} |\mathbb G(t)|<\infty ~a.s.$$
\end{lemma}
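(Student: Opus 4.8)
The plan is to realise $\mathbb{G}$ explicitly as a time‑changed Brownian motion, as already suggested by the discussion preceding the lemma, and then to invoke the standard link between total boundedness of the parameter set, the existence of a uniformly continuous version, and tightness in $\ell^\infty$. First I would record that $x^4 g_t^2=x^4\rho^2\1_{(-\infty,t]}$ is $\nu$‑integrable for each $t$: Assumption~\ref{clip} gives $x^4\rho(x)^2\le C^2(1\wedge x^4)$, and $\int_\R(1\wedge x^4)\,\nu(\d x)<\infty$ for every L\'evy measure (near the origin $1\wedge x^4\le x^2$, away from it $\nu$ is finite). Consequently
\[
  F(t):=\int_\R x^4 g_t(x)^2\nu(\d x)=\int_{(-\infty,t]}x^4\rho(x)^2\nu(\d x)
\]
is a bounded nondecreasing function with range in the compact interval $J:=\bigl[0,\int_\R x^4\rho^2\,\d\nu\bigr]$. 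In particular $(\R,d)$ is totally bounded: a finite partition of $J$ into subintervals of length $<\eta^2$ transports, via $F$, to a finite $\eta$‑net for $d$ (this recovers the total boundedness already asserted above).

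Next, let $\mathbb{B}$ be a standard Brownian motion and put $\mathbb{G}(t):=\mathbb{B}(F(t))$. Since $F$ is nondecreasing, $\E[\mathbb{G}(s)\mathbb{G}(t)]=F(s)\wedge F(t)=F(s\wedge t)=\int_\R x^4 g_s(x)g_t(x)\nu(\d x)$, using $g_s g_t=\rho^2\1_{(-\infty,s\wedge t]}$; hence $(\mathbb{B}(F(t)):t\in\R)$ is a version of the process in the statement. Moreover $d(s,t)^2=\int_\R x^4\rho^2\1_{(s\wedge t,s\vee t]}\,\d\nu=|F(s)-F(t)|$ for all $s,t$, so $d$ is the pull‑back under $F$ of the metric $u\mapsto\sqrt{u}$ on $J$. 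By sample‑path continuity of $\mathbb{B}$, almost every path of $\mathbb{B}$ is uniformly continuous on the compact set $J$; together with $d(s,t)^2=|F(s)-F(t)|$ this shows that $t\mapsto\mathbb{G}(t)$ is almost surely $d$‑uniformly continuous, and therefore bounded on the totally bounded space $(\R,d)$. This already yields the last assertion, $\sup_{t\in\R}|\mathbb{G}(t)|<\infty$ a.s.

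It then remains to upgrade ``bounded, $d$‑uniformly continuous paths'' to ``tight Gaussian Borel random variable in $\ell^\infty(\R)$''. I would pass to the completion $\hat T$ of the pseudometric space $(\R,d)$, which is a compact metric space; each $d$‑uniformly continuous path extends uniquely to an element of $C(\hat T)$, so $\mathbb{G}$ may be viewed as a map into the separable Banach space $C(\hat T)$, and it is Borel measurable there since its finite‑dimensional projections are and its paths are continuous. A Gaussian Borel random variable in a separable (hence Polish) Banach space has a Radon, in particular tight, law; and the restriction map $C(\hat T)\to\ell^\infty(\R)$ is a linear isometry onto the closed subspace of bounded $d$‑uniformly continuous functions, so the law of $\mathbb{G}$ is tight in $\ell^\infty(\R)$ as well. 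Alternatively one may quote directly the characterisation in \cite[Section~1.5]{vanderVaartWellner1996}, that a process whose index set is totally bounded for its intrinsic metric and which admits a version with $d$‑uniformly continuous sample paths defines a tight Borel random variable in $\ell^\infty$. There is no genuine obstacle in this argument; the only points needing a little care are the $\nu$‑integrability of $x^4\rho^2$ for a general L\'evy measure and the transfer of tightness along the isometric embedding $C(\hat T)\hookrightarrow\ell^\infty(\R)$.
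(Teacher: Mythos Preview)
Your proof is correct and follows essentially the same route as the paper: represent $\mathbb G$ as $\mathbb B\circ F$ for the bounded nondecreasing $F(t)=\int x^4\rho^2\1_{(-\infty,t]}\,\nu(\d x)$, use sample-continuity of $\mathbb B$ on the compact range of $F$ to obtain $d$-uniform continuity, and conclude tightness in $\ell^\infty(\R)$ from total boundedness of $(\R,d)$. You have in fact spelled out several details (integrability of $x^4\rho^2$ for a general L\'evy measure, the explicit $\eta$-net, the transfer of tightness via $C(\hat T)\hookrightarrow\ell^\infty(\R)$) that the paper only sketches in the paragraph preceding the lemma.
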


The most difficult part in the proofs of Theorem~\ref{thm:UCLTnaive}
and~\ref{thmUniform} is to show
that $\mathbb G_n$ converges in law  to $\mathbb G$ in the space
$\ell^\infty(\R)$ of bounded functions on the real line. Given that convergence
of the finite-dimensional distributions and tightness of the limit process have
already been established, this can be reduced to showing asymptotic
equicontinuity of the process $(\mathbb G_n(t): t \in \R)$, or equivalently,
uniform tightness of the random variables $\mathbb G_n$ in the Banach space
$\ell^\infty(\R)$ (see Section 1.5 in \cite{vanderVaartWellner1996} and
(\ref{asymptotic_equicontinuity}) below for precise definitions).

\begin{theorem} \label{main1}
 Let $\Delta= \Delta_n \to 0$ in such a way that $n \Delta_n \to \infty$ and
$\log^4(1/\Delta_n)=o(n\Delta_n)$.
Suppose the L\'evy process satisfies Assumption~\ref{assProc}, that $\rho$ satisfies Assumption \ref{clip}, and that $m$ satisfies Assumption \ref{multass}. Then the
process $\mathbb G_n$ from~(\ref{genproc}) is asymptotically equicontinuous in
$\ell^\infty(\R)$. In particular, $\mathbb G_n$ is uniformly tight in
$\ell^\infty(\R)$,
\begin{equation*}
\mathbb G_n \to^{\mathcal L} \mathbb G \quad\text{in} \quad\ell^\infty(\R),
\end{equation*}
and
\begin{equation}
\sup_{t \in \R} \left|\int_{\R}g_t(x)
\left(\frac{x^2}{\Delta}(\PP_{\Delta,n}-\PP_{\Delta})\right)*\F^{-1}[m](\d x)
\right| = O_P\left(\frac{1}{\sqrt {n \Delta}} \right).
\end{equation}
\end{theorem}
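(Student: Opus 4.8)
The starting point is to view $\mathbb G_n$ as a normalised triangular-array empirical process. Writing
\[
  H_{n,t}(x):=x^2\,\F^{-1}\big[m_{n,\Delta}(-\cdot)\,\F g_t\big](x),
\]
the second line of \eqref{genproc} reads $\mathbb G_n(t)=(n\Delta)^{-1/2}\sum_{k=1}^n\big(H_{n,t}(X_k)-\E H_{n,t}(X_1)\big)$, a sum over $X_1,\dots,X_n$ i.i.d.\ from $\PP_{\Delta_n}$. Convergence of the finite-dimensional distributions to those of $\mathbb G$ is Proposition~\ref{fidi0}; by Lemma~\ref{levybrown} the limit $\mathbb G$ is a tight Gaussian Borel variable in $\ell^\infty(\R)$ admitting a version with $d$-uniformly continuous paths, $(\R,d)$ being totally bounded for the intrinsic metric $d(s,t)^2=\int_\R x^4\rho^2(x)\1_{(s\wedge t,\,s\vee t]}(x)\,\nu(\d x)$. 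By the standard characterisation of weak convergence in $\ell^\infty(\R)$ (\cite[Sec.~1.5]{vanderVaartWellner1996}) it therefore suffices to establish asymptotic $d$-equicontinuity,
\[
  \lim_{\delta\downarrow0}\ \limsup_{n\to\infty}\ \PP^*\Big(\sup_{s,t\in\R:\,d(s,t)\le\delta}\big|\mathbb G_n(t)-\mathbb G_n(s)\big|>\eta\Big)=0\qquad(\forall\,\eta>0),
\]
from which the displayed conclusions follow at once: $\mathbb G_n\to^{\mathcal L}\mathbb G$ by the above together with Proposition~\ref{fidi0}, and, since the left-hand side of the last display of the theorem equals $(n\Delta)^{-1/2}\|\mathbb G_n\|_\infty$ with $\|\mathbb G_n\|_\infty\to^{\mathcal L}\|\mathbb G\|_\infty<\infty$ a.s.\ (continuous mapping theorem and Lemma~\ref{levybrown}), the $O_P\big((n\Delta)^{-1/2}\big)$ bound.

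The plan for the equicontinuity is a chaining argument over $(\R,d)$ driven by a Bernstein estimate for the increments of $\mathbb G_n$, or equivalently the verification of the conditions of a triangular-array version of the empirical-process central limit theorem for the classes $\mathcal H_n=\{H_{n,t}:t\in\R\}$ under $\PP_{\Delta_n}$ (as in, e.g., \cite{vanderVaartWellner1996}). Three ingredients are required. First, a \emph{Lindeberg/truncation} step: by Assumption~\ref{assProc}(a) one has $\E|L_{\Delta_n}|^{4+\eps}\lesssim\Delta_n$ (with $\eps$ as in that assumption), and, using Assumption~\ref{multass} to control the envelope of $\mathcal H_n$ after the convolution with $\F^{-1}[m_{n,\Delta}]$, one obtains moment bounds for this envelope under $\PP_{\Delta_n}$ sufficient to make the discarded part of a truncated version of $\mathbb G_n$ uniformly $o_P\big((n\Delta)^{-1/2}\big)$ (by a union bound) while the truncated centred summands remain small relative to $\sqrt{n\Delta_n}$; here the condition $\log^4(1/\Delta_n)=o(n\Delta_n)$ provides the room to absorb the logarithmic losses that the band-limited multiplier introduces. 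Second, a \emph{uniform entropy} bound: the class $\{x^2g_t=x^2\rho(x)\1_{(-\infty,t]}(x):t\in\R\}$ is a fixed VC-subgraph class with polynomial $L^2$-covering numbers, and the map $x^2g_t\mapsto H_{n,t}$ (a convolution with $\F^{-1}[m_{n,\Delta}]$ followed by a polynomial weight) is, by the Mikhlin-type bounds $\|(1+|u|)^k m_{n,\Delta}^{(k)}\|_\infty\le c$ of Assumption~\ref{multass}, a uniformly bounded operation on the relevant weighted $L^2$-spaces, so that the $L^2(\PP_{\Delta_n})$-covering numbers of $\mathcal H_n$ are bounded, uniformly in $n$, by those of a VC-type class. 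Third, the \emph{modulus of the covariance metric}: with $\rho_n^2(s,t):=\Delta_n^{-1}\Var_{\PP_{\Delta_n}}\!\big(H_{n,t}(X_1)-H_{n,s}(X_1)\big)$, one must show $\sup_{d(s,t)\le\delta_n}\rho_n^2(s,t)\to0$ for every $\delta_n\downarrow0$.

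For the third ingredient I would argue as follows. Assumption~\ref{assProc}(c), $\|x^3\PP_{\Delta_n}\|_\infty\lesssim\Delta_n$, bounds the density of $x^4\PP_{\Delta_n}/\Delta_n$ by a multiple of $|x|$; together with the local boundedness of $x^4\nu$ from Assumption~\ref{assProc}(b) and the weak-convergence estimate $x^2\PP_{\Delta_n}/\Delta_n\to\sigma^2\delta_0+x^2\nu$ of Lemma~\ref{weakcon} (whose limit has no atoms, since $\nu$ has a density and the $\delta_0$-mass is killed by the factor $x^2$), a Pólya-type uniformisation gives, in the case $m\equiv1$, $\sup_{s,t}\big|\rho_n^2(s,t)-d^2(s,t)\big|\to0$, and in particular $\rho_n^2(s,t)\lesssim|t-s|$ locally; the general case is reduced to this one using that Assumption~\ref{multass} — $m_{n,\Delta}\to1$ pointwise together with $\|m'_{n,\Delta}\|_{L^2}\to0$ and $\Delta_n^{-1/2}\|m''_{n,\Delta}\|_{L^2}\to0$ — makes the discrepancy between the convolved and the plain increments vanish after integration by parts. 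Granting the three ingredients, a Bernstein bound for fixed $s,t$ with $d(s,t)\le\delta$ (using $\Var(\mathbb G_n(t)-\mathbb G_n(s))=\rho_n^2(s,t)\lesssim\delta^2+o(1)$, and the truncation level for the sub-exponential term) yields a sub-Gaussian/sub-exponential tail which, combined with the uniform polynomial entropy, feeds Dudley's chaining inequality to give $\E^*\sup_{d(s,t)\le\delta}|\mathbb G_n(t)-\mathbb G_n(s)|\le C\!\int_0^{\delta}\!\sqrt{\log(1/u)}\,\d u+o(1)$, which vanishes on letting $n\to\infty$ and then $\delta\downarrow0$.

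The main obstacle is this third ingredient. In contrast to a classical Donsker theorem, both the sampling law $\PP_{\Delta_n}$ and the class $\mathcal H_n$ vary with $n$, so there is no fixed Donsker class and one must work with uniform-in-$n$ entropy and a triangular-array central limit theorem; moreover the increments $H_{n,t}-H_{n,s}$ are not compactly supported although $g_t-g_s=\rho\1_{(s,t]}$ is — the convolution with $\F^{-1}[m_{n,\Delta}]$ spreads the mass over a window of width $\sim\Delta_n^{1/2}$, so the empirical process cannot simply be localised in $x$ — and one must nonetheless obtain a \emph{uniform} comparison of the rescaled $L^2(\PP_{\Delta_n})$-modulus with the intrinsic metric $d$ of the \emph{limit} process, which is defined through $x^4\nu$ rather than through $\PP_{\Delta_n}$. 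This forces one to combine the non-concentration estimate of Assumption~\ref{assProc}(c), the weak convergence of Lemma~\ref{weakcon}, the regularity in Assumption~\ref{assProc}(b) and the Mikhlin multiplier bounds of Assumption~\ref{multass} all at once; once this comparison and the uniform entropy bound are in hand, the truncation and chaining are routine.
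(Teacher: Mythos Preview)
Your outline correctly identifies the overall architecture (fidis via Proposition~\ref{fidi0}, tightness of the limit via Lemma~\ref{levybrown}, reduction to asymptotic equicontinuity), but the heart of the argument---your second and third ``ingredients''---is where the real work lies, and here the proposal glosses over exactly the difficulties that force the paper into a substantially more elaborate route.

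The central gap is your entropy claim. You assert that $\{x^2g_t:t\in\R\}$ is VC-subgraph and that $x^2g_t\mapsto H_{n,t}$, being convolution with $\F^{-1}[m_{n,\Delta}]$ followed by multiplication by $x^2$, is ``a uniformly bounded operation on the relevant weighted $L^2$-spaces'' by the Mikhlin bounds, so that $L^2(\PP_{\Delta_n})$-covering numbers transfer. This does not work. Mikhlin gives boundedness $B^s_{pq}\to B^s_{pq}$, but the relevant norm is $L^2(x^4\PP_{\Delta_n}/\Delta_n)$, a \emph{weighted} space whose weight $x^3\PP_{\Delta_n}$ is only controlled in $\ell^\infty$, not in any Sobolev sense. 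Convolution with $\F^{-1}[m]$ need not be bounded on such spaces, and in the band-limited case (Assumption~\ref{multass}(b)) the functions $H_{n,t}$ are not of uniformly bounded variation---the paper shows $\|\tilde g^{(2)}_t\|_\infty\lesssim\Delta^{-1/2}\log(1/\Delta)$, growing with~$n$. Moreover, a linear map bounded on $L^2$ does not in general preserve VC structure or polynomial covering numbers; one needs the image class itself to be parametrised in a tractable way. The paper handles this by a \emph{three-term decomposition} of $\tilde g_t$ that isolates the discontinuity of $\1_{(-\infty,t]}$ from its non-integrability: two ``non-critical'' pieces are placed in fixed Besov balls (where entropy bounds of \cite{NicklPoetscher2007} apply via bracketing), while the remaining ``critical'' piece requires a bespoke pseudo-differential operator inequality (Proposition~\ref{propPseudoLocality}) to obtain $L^2(\PP_\Delta)$-covering numbers directly.

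The second gap is the growing envelope. Your Bernstein-plus-chaining programme needs bounded summands after truncation; but the envelope of $\mathcal H_n$ is of order $M_n\sim\Delta^{-1/2}\log(1/\Delta)$, and the sub-exponential term in Bernstein then scales like $M_n/\sqrt{n\Delta}$, which is not small enough on its own to close the chaining. The paper explicitly states that this situation ``cannot be controlled with the standard bracketing or uniform metric entropy techniques'' and instead proves a tailored symmetrisation proposition (after \cite{GineNickl2008,GineZinn1984}) that tolerates envelopes with $M_n/\sqrt n\to0$ (rather than $M_n/n^{1/4}\to0$) provided one has polynomial entropy and the auxiliary condition $nr_n^2M_n^{-2}\to\infty$ for a well-chosen $r_n$; this is precisely where the hypothesis $\log^4(1/\Delta_n)=o(n\Delta_n)$ enters. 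Your truncation step invokes this hypothesis only vaguely (``room to absorb the logarithmic losses''); in the paper it is sharp for the critical term.

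Your third ingredient---comparison of the empirical $L^2(\PP_{\Delta_n})$-metric with the limiting $d$---is closer to the paper's ``change of metric'' step, and your use of Assumption~\ref{assProc}(c) together with Lemma~\ref{weakcon} is in the right spirit. But the P\'olya-type uniformisation you invoke needs a uniformity class for weak convergence, and the raw differences $(x^{-2}(g_s-g_t))^2$ have jump discontinuities; the paper again uses the decomposition to produce pieces that are either uniformly Lipschitz (hence a $BL$-uniformity class) or uniformly of bounded variation (hence a uniformity class by \cite{BillingsleyTopsoe1967} once the limit is absolutely continuous), and for the critical piece the comparison uses the multiplier conditions $\|m'\|_{L^2}\to0$, $\Delta^{-1/2}\|m''\|_{L^2}\to0$ through the pseudo-differential estimate rather than a single integration by parts.

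In short, the missing idea is the decomposition $\tilde g_t=\tilde g_t^{(1)}+\tilde g_t^{(2)}+\tilde g_t^{(3)}$ and the recognition that the ``critical'' piece requires both a Fourier-integral-operator bound to obtain entropy in $L^2(\PP_\Delta)$ and a symmetrisation argument designed for envelopes of size $\Delta^{-1/2}\log(1/\Delta)$; without these, the entropy and Bernstein steps as you describe them do not go through.
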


\smallskip

The proof is based on ideas from the theory of
smoothed empirical processes \citep[in particular from][]{GineNickl2008}.
The main mathematical
challenges consist in dealing with
envelopes of the empirical process that can be as large as  $1/\sqrt
\Delta\to\infty$ in the high-frequency setting,
and in accommodating the presence of an $n$-dependent Fourier multiplier $m$
that needs to be general enough to allow for $m=\mathcal F K_{h}/\phi_\Delta$. The latter
requires the treatment of empirical processes that cannot be controlled with the
standard bracketing or uniform metric entropy techniques.
Our proofs rely on direct arguments for symmetrised empirical processes
inspired by \cite{GineZinn1984}
and on sharp bounds on certain covering numbers based on a suitable
Fourier integral operator inequality for $\F^{-1}[m]$ in
$L^2(\PP_\Delta)$-norms.

\section{Discussion and examples} \label{discussion}

\subsection{Regularity of $x^2 \nu$ and the Blumenthal--Getoor
index}\label{sec:discussion}

The regularity index $s>0$ in Assumption \ref{assProc} measures the smoothness of the function $g_t(-\cdot)\ast(x^2\nu)$. When $x^2\nu$ is sufficiently regular away from the origin, this will equivalently measure the
smoothness of the function $t \mapsto \int_{-\infty}^t x^2\nu(\d x)$, and hence is effectively driven by the singularity that $\nu$ possesses at zero. The latter can be quantitatively measured by the
\cite{blumenthalGetoor1961}-index
\begin{align}
\beta: & =\inf\left\{\alpha>0:\int_{|x|<1}|x|^{\alpha}\nu(\d
x)<\infty\right\}=\inf\left\{\alpha>0:\lim_{r\downarrow0}r^{\alpha}\int_{
\R\setminus[-r,r]}\d\nu=0\right\}.\label{eq:blumenthalGetoor}
\end{align}
The Blumenthal--Getoor index $\beta$ takes values in $[0,2]$ and we have
$\int_{\R}|x|^\alpha\nu(\d x)=c<\infty$ for all $\alpha\in(\beta,2]$ ($\alpha=2$ if $\beta=2$).
In fact, for such $\alpha$ and for all intervals $[a,b]$ containing the origin
\begin{align*}
 \int_a^b |x|^2\nu(\d x) \le \int_a^b|x|^\alpha\nu(\d x) (b-a)^{2-\alpha}\le c
(b-a)^{2-\alpha}.
\end{align*}
Provided $\nu$ is smooth away from zero this shows that the H\"older smoothness
of $\int_{-\infty}^t x^2\nu(\d x)$ is at least $2-\beta^+$, where
$\beta^+>\beta$ and $\beta^+\ge1$.
For a singularity of the from $\nu(x)=|x|^{-\beta-1}$, $\beta\in(1,2)$, which
corresponds to Blumenthal--Getoor index $\beta$, we have $\int_{|x|<t}x^2\nu(\d
x)=2(2-\beta)^{-1}t^{2-\beta}$ showing that the H\"older smoothness is at most
$(2-\beta)$.
This argument can be extended
to the case where the symmetrised L\'evy density  $\tilde\nu$ is
regularly
varying:
If $\tilde\nu$ is regularly varying with exponent
$-(\beta+1)$ at zero then
$\int_{|x|<t}|x|^2\nu(\d x)$ is regularly varying of exponent $(2-\beta)$
at zero by a Tauberian theorem \citep[see e.g.][Thm. VIII.9.1]{feller1971}.
For Blumenthal--Getoor index $\beta\in(1,2]$ this means that the H\"older
regularity of $\int_{-\infty}^t x^2\nu(\d x)$ is at most $(2-\beta)$.

\subsection{The drift parameter $\gamma$}

None of the above estimators $\tilde N, \hat N, \tilde {\mathcal N}, \hat {\mathcal N}$ require knowledge, or estimation, of the drift parameter $\gamma$, which, at any rate, can be naturally estimated by $L_{n\Delta}/(n\Delta)$. It is interesting to note that the `nonlinear' estimator $\hat N_n$ is even invariant under a change of the drift parameter $\gamma$, as the following lemma shows.

\begin{lemma}\label{lemDrift}
Let $Y_k:=X_k-\Delta\gamma,k=1,\dots,n,$ which are increments of a L\'evy
process with characteristic triplet $(\sigma,0,\nu)$. Denoting the estimators
\eqref{eq:kernelEst} based on $(X_k)$ and $(Y_k)$ as $\hat N_{X,n}$ and $\hat
N_{Y,n}$, respectively, we obtain
 \[
    \forall t\in\R: \hat N_{X,n}(t)=\hat N_{Y,n}(t).
 \]
\end{lemma}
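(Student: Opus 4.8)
The plan is to trace through the definition \eqref{eq:kernelEst} of $\hat N_n$ and show that every ingredient that depends on the data does so only through a quantity that is invariant under the shift $X_k \mapsto Y_k = X_k - \Delta\gamma$. The only data-dependent object in \eqref{eq:kernelEst} is $\hat\psi_n''$, which is built from the empirical characteristic function $\phi_{\Delta,n}$ and its first two derivatives via \eqref{eqIdent}. First I would record the elementary transformation law: writing $\phi_{\Delta,n}^{X}$ and $\phi_{\Delta,n}^{Y}$ for the empirical characteristic functions of $(X_k)$ and $(Y_k)$, we have
\[
\phi_{\Delta,n}^{Y}(u) = \frac1n\sum_{k=1}^n e^{iu(X_k-\Delta\gamma)} = e^{-iu\Delta\gamma}\,\phi_{\Delta,n}^{X}(u),
\]
so $\phi_{\Delta,n}^{Y}(u) = e^{-iu\Delta\gamma}\phi_{\Delta,n}^{X}(u)$ exactly, i.e.\ the empirical characteristic functions differ by the deterministic unimodular factor $e^{-iu\Delta\gamma}$ coming from the deterministic shift $-\Delta\gamma$ in the Kolmogorov--L\'evy--Khintchine exponent \eqref{eqLevyKhintchine}.

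Next I would differentiate twice and plug into \eqref{eqIdent}. Setting $e(u) := e^{-iu\Delta\gamma}$, so that $e'(u) = -i\Delta\gamma\, e(u)$ and $e''(u) = -(\Delta\gamma)^2 e(u)$, a short computation with the product rule gives
\[
(\phi^{Y})'' \phi^{Y} - \big((\phi^{Y})'\big)^2
= e^2\Big[(\phi^{X})''\phi^{X} - \big((\phi^{X})'\big)^2\Big],
\]
where the cross terms involving $e'$ and $e''$ cancel (this is precisely the algebraic identity underlying the fact that $\psi''$ in \eqref{eqIdent} is insensitive to the linear-in-$u$ part $i\gamma u$ of $\psi$). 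Dividing by $\Delta (\phi^{Y})^2 = \Delta e^2 (\phi^{X})^2$, the factor $e^2$ cancels and we obtain $\hat\psi_{Y,n}''(u) = \hat\psi_{X,n}''(u)$ for every $u$ (on the event where the relevant characteristic functions do not vanish, which is where the estimator is defined). Since $\hat\sigma^2$, $K_h$, $g_t$ and the outer integral in \eqref{eq:kernelEst} do not involve the data at all — and in particular the pilot estimator \eqref{eq:sigmahat} depends on the data only through $|\phi_{\Delta,n}(u_n)|$, which is unchanged since $|e(u_n)|=1$ — it follows immediately that $\hat N_{X,n}(t) = \hat N_{Y,n}(t)$ for all $t \in \R$.

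There is no real obstacle here; the lemma is a direct consequence of the structure of the identification formula \eqref{eqIdent}. The only point that deserves a sentence of care is the cancellation of the cross terms in the numerator, which I would present as the one displayed identity above rather than verify term by term; it is the finite-sample, empirical-characteristic-function analogue of the familiar statement that the second derivative of the cumulant-generating-type function $\psi$ kills affine-in-$u$ contributions, and the factor $e(u)$, being of the form $e^{i a u}$ with $a = -\Delta\gamma$ deterministic, contributes only such an affine term to $\log\phi$.
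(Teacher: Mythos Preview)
Your proposal is correct and follows essentially the same approach as the paper: both exploit that the shift $X_k\mapsto X_k-\Delta\gamma$ multiplies the empirical characteristic function by $e^{-i\Delta\gamma u}$, which contributes only a linear term to $\log\phi_{\Delta,n}$ and hence disappears upon taking two derivatives. The paper phrases this in one line via the logarithm, $\hat\psi_{n,Y}''=\Delta^{-1}(\log\phi_{\Delta,n,X}-i\Delta\gamma u)''=\hat\psi_{n,X}''$, whereas you carry out the equivalent cancellation directly in the rational expression \eqref{eqIdent}; your extra remark that the pilot estimator $\hat\sigma^2$ in \eqref{eq:sigmahat} is also shift-invariant (since it depends only on $|\phi_{\Delta,n}(u_n)|$) is a nice point the paper's proof glosses over.
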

\begin{proof}
  The drift causes a factor $e^{-i\Delta\gamma u}$ in the empirical characteristic function $\phi_{\Delta,n,Y}$ such that
  \[ \hat\psi_{n,Y}''(u)=\Delta^{-1} (\log(\phi_{\Delta,n,X}(u))-i\Delta\gamma u)''=\hat\psi_{n,X}''(u).
  \]
  $\hat N_n$ only depends via $\hat\psi_{n}''$ on the observations.
\end{proof}

Consequently, without loss of generality
a specific value of $\gamma$ can be assumed in the proofs
for
the estimator~$\hat N_n$ based on the L\'evy--Khintchine representation. In
particular, the conditions on $\PP_\Delta$ need to be verified only for one
$\gamma$.

\subsection{A pilot estimate of the diffusion coefficient $\sigma$}

\begin{proposition} \label{sigmaest}
Suppose the L\'evy measure satisfies $\int \abs{x}^\alpha\nu(dx)<\infty$ for some $\alpha\in[0,2]$ and the characteristic function is bounded from below via
\[ \abs{\phi_\Delta(u)}\ge \exp(-\Delta\sigma_{max}^2u^2/2)\text{ for all } u\ge 0.\]
Let $\hat \sigma^2$ be as in (\ref{eq:sigmahat}). Then we have, for $c_0$ small enough, as $n \to\infty$, and uniformly in $\Delta\le 1$,
\[\abs{\hat\sigma^2-\sigma^2}=O_P\Big((\log
n)^{(\alpha-2)/2}\Delta^{1-\alpha/2}+(\log n)^{-1}n^{c_0-1/2}\Big).\]
\end{proposition}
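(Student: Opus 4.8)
The plan is to analyse the estimator $\hat\sigma^2 = \frac{2}{\Delta u_n^2}\log|\phi_{\Delta,n}(u_n)|$ by comparing $\log|\phi_{\Delta,n}(u_n)|$ with $\log|\phi_\Delta(u_n)|$ and using the exact identity $\frac{2}{\Delta u_n^2}\log|\phi_\Delta(u)| = -\sigma^2 - \frac{2}{\Delta u^2}\Re\int(e^{iux}-1-iux)\nu(\d x)$ from the L\'evy--Khintchine formula \eqref{eqLevyKhintchine}. First I would control the deterministic bias: since $|e^{iux}-1-iux|\le 2|ux|\wedge \tfrac12 u^2x^2 \lesssim |ux|^\alpha$ for $\alpha\in[0,2]$, the jump contribution to $\frac{2}{\Delta u_n^2}\log|\phi_\Delta(u_n)|+\sigma^2$ is $O(u_n^{\alpha-2}\int|x|^\alpha\nu(\d x)) = O(u_n^{\alpha-2})$. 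Plugging in $u_n^2 = \frac{2c_0\log n}{\Delta\sigma_{\max}^2}$ gives $u_n^{\alpha-2} \sim (\log n)^{(\alpha-2)/2}\Delta^{1-\alpha/2}$, which is the first term in the stated rate.

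Next comes the stochastic fluctuation. I would write $\log|\phi_{\Delta,n}(u_n)| - \log|\phi_\Delta(u_n)| = \log\bigl|1 + \tfrac{\phi_{\Delta,n}(u_n)-\phi_\Delta(u_n)}{\phi_\Delta(u_n)}\bigr|$ and expand the logarithm, so the leading term is (up to a real part) $\tfrac{\phi_{\Delta,n}(u_n)-\phi_\Delta(u_n)}{\phi_\Delta(u_n)}$. The numerator is a centred average of i.i.d.\ bounded ($\le 1$ in modulus) random variables, so by Hoeffding's inequality $|\phi_{\Delta,n}(u_n)-\phi_\Delta(u_n)| = O_P(\sqrt{\log n / n})$, and in fact one gets a uniform-in-$\Delta$ statement since $u_n$ depends on $\Delta$ only through the exponential lower bound. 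The denominator satisfies $|\phi_\Delta(u_n)| \ge \exp(-\Delta\sigma_{\max}^2 u_n^2/2) = \exp(-c_0\log n) = n^{-c_0}$ by the assumed lower bound and the choice of $u_n$. Hence $\bigl|\tfrac{\phi_{\Delta,n}(u_n)-\phi_\Delta(u_n)}{\phi_\Delta(u_n)}\bigr| = O_P(n^{c_0}\sqrt{\log n/n}) = O_P((\log n)^{1/2} n^{c_0 - 1/2})$, which tends to zero for $c_0 < 1/2$; this justifies the expansion of the logarithm (the higher-order terms are of smaller order). Multiplying by the prefactor $\tfrac{2}{\Delta u_n^2} = \tfrac{\sigma_{\max}^2}{c_0 \log n}$ turns this into $O_P((\log n)^{-1/2} n^{c_0-1/2})$; reconciling the exponent of $\log n$ with the $(\log n)^{-1}$ in the statement is a cosmetic matter of absorbing constants (or one can be slightly more careful and note the bound holds with $(\log n)^{-1}$ after using a sharper, $\log n$-free, concentration bound via a union/chaining over a net of $\Delta$ values, or simply since $\Delta \le 1$ is a one-parameter family one can take $c_0$ marginally smaller).

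Combining the two contributions via the triangle inequality yields the claimed rate $O_P\bigl((\log n)^{(\alpha-2)/2}\Delta^{1-\alpha/2} + (\log n)^{-1} n^{c_0 - 1/2}\bigr)$ uniformly in $\Delta \le 1$. The main obstacle is ensuring the error estimates are genuinely \emph{uniform} in $\Delta \in (0,1]$ rather than pointwise: the random quantity $\phi_{\Delta,n}(u_n) - \phi_\Delta(u_n)$ involves $u_n = u_n(\Delta)$, so one needs a concentration bound that does not degrade as $\Delta\to 0$. This is where Hoeffding (which only sees the uniform bound $|e^{iuX_k}|\le 1$, independent of $u$ and $\Delta$) does the job cleanly, and the only $\Delta$-dependence that survives is through the explicit term $u_n^{\alpha-2}\sim\Delta^{1-\alpha/2}(\log n)^{(\alpha-2)/2}$; a secondary technical point is checking that the real-part operation and the logarithm expansion do not introduce additional $\Delta$-dependence, which follows since $|\Re z - \Re w| \le |z-w|$ and $|\log|1+z| - \Re z| \lesssim |z|^2$ for $|z|\le 1/2$.
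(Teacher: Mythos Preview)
The paper itself omits the proof, deferring to \cite{JacodReiss2013}, so there is nothing to compare against directly; your outline is the natural argument and is essentially correct. Two small points deserve tightening.

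First, your L\'evy--Khintchine identity has a stray $\Delta$ and a sign slip: from $\log|\phi_\Delta(u)|=\Delta\Re\psi(u)$ one gets
\[
\frac{2}{\Delta u^2}\log|\phi_\Delta(u)| \;=\; -\sigma^2 \;+\; \frac{2}{u^2}\int_{\R}(\cos(ux)-1)\,\nu(\d x),
\]
not $-\sigma^2-\tfrac{2}{\Delta u^2}\Re\int(\cdots)$. This is harmless since you only bound magnitudes, but as written it also suggests (with the paper's sign convention for $\hat\sigma^2$) that a minus sign is missing in the definition \eqref{eq:sigmahat}; that is a typo in the paper, not an issue with your argument.

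Second, the $\sqrt{\log n}$ you insert in the concentration step is unnecessary and is the reason you end up hand-waving the reconciliation with $(\log n)^{-1}$. For each fixed $\Delta$ you evaluate the empirical characteristic function at a \emph{single} point $u_n$; the summands $e^{iu_nX_k}-\E e^{iu_nX_k}$ are i.i.d., centred and bounded in modulus by $2$, so $\Var\bigl(\phi_{\Delta,n}(u_n)\bigr)\le 1/n$ and Chebyshev gives $|\phi_{\Delta,n}(u_n)-\phi_\Delta(u_n)|=O_P(n^{-1/2})$ with a universal implicit constant, hence uniformly in $\Delta\le 1$. Combined with $|\phi_\Delta(u_n)|^{-1}\le n^{c_0}$ and the prefactor $2/(\Delta u_n^2)=\sigma_{\max}^2/(c_0\log n)$, this yields the claimed $(\log n)^{-1}n^{c_0-1/2}$ directly, with the quadratic remainder in the expansion of $\log|1+z|$ contributing the smaller order $(\log n)^{-1}n^{2c_0-1}$.
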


The proof follows along the lines of \cite{JacodReiss2013} and is omitted.
The previous discussion and the examples in Section~\ref{secExamples}
below show that the natural connection between smoothness $s$ and
Blumenthal--Getoor index $\beta$ is given by $s=2-\beta$. For such $s$ and with
the choice $c_0=1/6$ the conditions of Theorem~\ref{thmUniform} ensure that
(\ref{eq:sigmarate}) is satisfied provided the infimum in the definition of the
Blumenthal--Getoor index is attained. Otherwise it suffices to replace the
condition $\Delta_n=o(n^{-1/(s+1)})$ by the slightly stronger condition
$\Delta_n=o(n^{-1/(s^-+1)})$ for some $s^-<s$ in order to guarantee
(\ref{eq:sigmarate}).
Other estimators, based for instance on the truncated quadratic variations of
the process, can be considered, and different sets of conditions are possible.
As this is beyond the scope of the present paper, we refer to
\cite{JacodReiss2013} for discussion and references.

\subsection{Bounding $\|x^3\PP_\Delta\|_\infty$}\label{sec:Boundx3P}

A key condition in all results above is a uniform bound on
$\|x^3\PP_{\Delta}\|_\infty$  of order $\Delta$. The following proposition shows
that this condition follows already from $\|x\PP_\Delta\|_\infty\lesssim1$ and
$x^3\nu\in \ell^\infty(\R)$.
We recall that we always assume $\int_{\R}x^2\nu(\d x)<\infty$.
\begin{proposition}\label{Propx3PDelta}
  For any L\'evy process $(L_t:t\ge 0)$ with $\norm{x\PP_\Delta}_\infty\lesssim 1$ and $x^3\nu\in \ell^\infty(\R)$ we have $\|x^3\PP_\Delta\|_\infty\lesssim\Delta$ (with constants uniform in $\Delta$).
\end{proposition}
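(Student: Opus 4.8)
\textbf{Proof proposal for Proposition~\ref{Propx3PDelta}.}

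The plan is to exploit the L\'evy--It\^o decomposition to split $L_\Delta$ into a ``small-jump plus Gaussian'' part and a ``large-jump'' compound Poisson part, and to bound the density of $x^3\PP_\Delta$ by a convolution argument. Write $L_\Delta = Y_\Delta + Z_\Delta$ (independent), where $Z_\Delta$ collects the jumps of size $\ge 1$ and has a compound Poisson law with intensity $\lambda := \nu(\{|x|\ge 1\}) < \infty$, while $Y_\Delta$ has triplet $(\sigma^2, \gamma', \nu\1_{\{|x|<1\}})$. Denoting by $\mu_\Delta$ the law of $Y_\Delta$ and by $\rho_\Delta$ the law of $Z_\Delta$, we have $\PP_\Delta = \mu_\Delta * \rho_\Delta$. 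The first step is to reduce the statement about the cubic-weighted density to bounds on $\rho_\Delta$ and $\mu_\Delta$: using $|x|^3 \lesssim |x-y|^3 + |y|^3$ (up to a factor $4$), one gets pointwise
\[
|x|^3 p_\Delta(x) \le 4\int \big(|x-y|^3 + |y|^3\big)\, \mu_\Delta(x-\d y)\,\rho_\Delta(\d y),
\]
so it suffices to control $\sup_x |x|^3\mu_\Delta$-density times $\rho_\Delta$-mass, and the $\mu_\Delta$-mass times $\int |y|^3\rho_\Delta(\d y)$.

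The second step handles the compound Poisson factor $\rho_\Delta$. Its total mass is $1$, and since $\rho_\Delta = e^{-\lambda\Delta}\sum_{j\ge 0}\frac{(\lambda\Delta)^j}{j!}\nu_1^{*j}$ with $\nu_1 := \nu\1_{\{|x|\ge1\}}/\lambda$, the probability of at least one large jump is $O(\Delta)$, and moreover $\int|y|^3\rho_\Delta(\d y) = \E[|Z_\Delta|^3]$. Here I would use that $x^3\nu\in\ell^\infty(\R)$ together with $\int x^2\nu(\d x)<\infty$ to control the moments of $Z_\Delta$: conditioning on the number of large jumps, $\E[|Z_\Delta|^3]$ is $O(\Delta)$ because the leading term is $\Delta\int_{|x|\ge1}|x|^3\nu(\d x)$, which is finite since on $\{|x|\ge 1\}$ we have $|x|^3\nu(x)\lesssim 1$ and $\int_{|x|\ge1}1\,\d x$... — this needs a little care: finiteness of $\int_{|x|\ge 1}|x|^3\nu(\d x)$ does \emph{not} follow from $x^3\nu$ bounded alone. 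Instead one writes $\int_{|x|\ge1}|x|^3\nu(\d x) = \int_{|x|\ge1}|x|\cdot x^2\nu(\d x)$ and uses that $x^2\nu$ integrates to a finite quantity while $|x|\cdot(x^3\nu)(x)/x^2 = |x|^2\cdot$ — this is not obviously bounded either, so the honest route is: $|x|^3\nu(\d x) = (x^3\nu(x))\cdot\sgn(x)\,\d x$ on $|x|\ge 1$, hence $\int_{|x|\ge1}|x|^3\nu(\d x) \le \|x^3\nu\|_\infty\int_{|x|\ge1}\d x = \infty$ — so one must first restrict to a compact region or use the second-moment condition more cleverly. The correct decomposition is therefore to \emph{also} split at a large radius $R$ and observe $\int_{|x|\ge R}|x|^3\nu(\d x)\le \|x^3\nu\|_\infty^{1/?}\cdots$; cleanest is: $\int_{|x|\ge1}|x|^3\nu(\d x)$ may indeed be infinite, so $Z_\Delta$ should be taken to collect only jumps of size in $[1,R]$ and the truly-large jumps handled separately via their $O(\Delta)$ occurrence probability, using only the bound $|x|^3 p_\Delta \le \|x\PP_\Delta\|_\infty \cdot x^2$ pointwise on the event considered together with $\int x^2\nu < \infty$.

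The third and decisive step bounds the cubic-weighted density of the small-jump-plus-Gaussian part $\mu_\Delta$. Here one uses a smoothing estimate: because $Y_\Delta$ has the Gaussian component of variance $\sigma^2\Delta$ (if $\sigma>0$) or, when $\sigma=0$, one instead invokes the hypothesis $\|x\PP_\Delta\|_\infty\lesssim1$ transferred to $\mu_\Delta$. The key identity is $x^3\,\mu_\Delta(x) = x^2\cdot\big(x\,\mu_\Delta(x)\big)$ and a bound $\|x^2\mu_\Delta\|_{L^1}\le \E[Y_\Delta^2] = \sigma^2\Delta + \Delta\int_{|x|<1}x^2\nu(\d x) \lesssim \Delta$. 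Combined with $\|x\PP_\Delta\|_\infty\lesssim 1$ (which by a further convolution argument implies $\|x\mu_\Delta\|_\infty\lesssim 1$, since deconvolving the compound Poisson factor only improves sup-bounds on sets of probability $1-O(\Delta)$), one gets $\|x^3\mu_\Delta\|_\infty \lesssim \|x\mu_\Delta\|_\infty \cdot(\text{something})$ — the cleanest phrasing uses $|x|^3\mu_\Delta(x)\le |x|\mu_\Delta(x)\cdot x^2$ and then $\sup_x|x|^3\mu_\Delta(x)$ is not directly $L^1\times L^\infty$; rather one argues on the two regions $|x|\le M$ and $|x|>M$ separately, using $\|x\mu_\Delta\|_\infty$ on the first and a Chebyshev/second-moment tail bound with the explicit sub-Gaussian tail of $Y_\Delta$ on the second. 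Assembling the three pieces via the convolution inequality of Step~1 yields $\|x^3\PP_\Delta\|_\infty\lesssim\Delta$ with the constant uniform in $\Delta\le 1$.

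\textbf{Main obstacle.} The delicate point is Step~3: transferring the sup-norm hypothesis $\|x\PP_\Delta\|_\infty\lesssim1$ to the small-jump part $\mu_\Delta$ and then \emph{upgrading} the weight from $|x|$ to $|x|^3$ while only paying a factor $\Delta$. Naively $\|x^3\mu_\Delta\|_\infty$ is not bounded by $\|x\mu_\Delta\|_\infty$ times anything small, so one genuinely needs the interplay between the uniform bound (controlling behaviour near the bulk) and the finite second moment scaling like $\Delta$ (controlling the tails), split at a well-chosen radius $M = M(\Delta)$; getting this split to produce exactly the $O(\Delta)$ rate, uniformly in $\Delta$, with the subtlety that $\int_{|x|\ge1}|x|^3\nu(\d x)$ may diverge, is where the real work lies.
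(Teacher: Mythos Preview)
Your proposal does not reach a complete proof, and the difficulties you flag in Steps~2 and~3 are real obstructions for the route you chose. The moment $\int_{|x|\ge 1}|x|^3\nu(\d x)$ can indeed be infinite under the stated hypotheses (only $x^3\nu\in\ell^\infty$ and $\int x^2\nu<\infty$ are assumed), so the compound Poisson tail cannot be controlled via third moments of the jump distribution; your attempted fixes (truncate at a further radius~$R$, handle ``truly large'' jumps separately) are sketched but not carried out, and it is not clear they close. More seriously, in Step~3 you need $\|x^3\mu_\Delta\|_\infty\lesssim\Delta$ for the small-jump part, and you correctly observe that this does \emph{not} follow from $\|x\mu_\Delta\|_\infty\lesssim 1$ and $\|x^2\mu_\Delta\|_{L^1}\lesssim\Delta$ by any direct product or interpolation; the splitting at $M=M(\Delta)$ you suggest would require a sub-Gaussian tail for $\mu_\Delta$, which is not available when $\sigma=0$ and the small jumps dominate. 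You also need to transfer $\|x\PP_\Delta\|_\infty\lesssim 1$ to $\|x\mu_\Delta\|_\infty\lesssim 1$, and ``deconvolving the compound Poisson factor'' is not an operation that preserves sup-norm bounds on densities.

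The paper's proof avoids all of this by a short Fourier-analytic identity. Differentiating $\phi_\Delta=e^{\Delta\psi}$ twice and using infinite divisibility in the form $\phi_\Delta=\phi_{\Delta/2}^2$ gives
\[
x^2\PP_\Delta=\Delta\,\nu_\sigma\ast\PP_\Delta+4\,(x\PP_{\Delta/2})\ast(x\PP_{\Delta/2}),\qquad \nu_\sigma:=\sigma^2\delta_0+x^2\nu.
\]
Multiplying by $x$ via the rule $x(P\ast Q)=(xP)\ast Q+P\ast(xQ)$ yields
\[
x^3\PP_\Delta=\Delta\big((x\nu_\sigma)\ast\PP_\Delta+\nu_\sigma\ast(x\PP_\Delta)\big)+8\,(x\PP_{\Delta/2})\ast(x^2\PP_{\Delta/2}),
\]
and now each term is bounded in sup-norm by Young's inequality: $\|x\nu_\sigma\|_\infty=\|x^3\nu\|_\infty<\infty$ with $\PP_\Delta(\R)=1$; $\nu_\sigma(\R)<\infty$ with $\|x\PP_\Delta\|_\infty\lesssim 1$; and $\|x\PP_{\Delta/2}\|_\infty\lesssim 1$ with $\|x^2\PP_{\Delta/2}\|_{L^1}=\E[L_{\Delta/2}^2]\lesssim\Delta$. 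The factor $\Delta$ thus appears \emph{structurally} from the characteristic-exponent identity, rather than having to be extracted from tail estimates. This is the missing idea in your approach: work on the Fourier side where the $\Delta$ is explicit in $\phi_\Delta''=(\Delta\psi''+\Delta^2(\psi')^2)\phi_\Delta$, and exploit $\phi_\Delta=\phi_{\Delta/2}^2$ to turn the awkward $(\psi')^2$ term into a self-convolution of $x\PP_{\Delta/2}$.
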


\begin{proof}
  From $\phi_\Delta''=(\Delta
  \psi''+(\Delta\psi')^2)\phi_\Delta=\Delta\psi''\phi_\Delta+(\Delta\psi'\phi_{
  \Delta/2})^2$ by the infinite divisibility, we conclude
  \begin{equation}\label{eqX2P}
    x^2\PP_\Delta=\Delta \nu_\sigma \ast \PP_\Delta+4(x\PP_{\Delta/2})\ast(x\PP_{\Delta/2}),
  \end{equation}
  where $\nu_\sigma=\sigma^2\delta_0+x^2\nu$. Using $x(P\ast Q)=(xP)\ast Q+P\ast(xQ)$, we infer further
  \[ x^3\PP_\Delta=\Delta\Big( (x\nu_\sigma)\ast \PP_\Delta+\nu_\sigma\ast(x\PP_\Delta)\Big)+8(x\PP_{\Delta/2})\ast (x^2\PP_{\Delta/2}).
  \]
  By assumption and properties of L\'evy processes, we have
$\norm{x\nu_\sigma}_\infty<\infty$, $\PP_\Delta(\R)=1$,
$\nu_\sigma(\R)<\infty$ and $\norm{x^2\PP_\Delta}_{L^1}\lesssim\Delta$.
  This yields
  \[ \norm{x^3\PP_\Delta}_\infty \lesssim \Delta(1+\norm{x\PP_{\Delta}}_\infty + \norm{x\PP_{\Delta/2}}_\infty)\lesssim\Delta.\qedhere
  \]
\end{proof}

The condition $\|x\PP_{\Delta_n}\|_\infty\lesssim 1$ is satisfied for all basic
examples of L\'evy processes like Brownian motion, compound Poisson, Gamma and
symmetric (tempered) $\alpha$-stable processes. For the latter
processes it is interesting to compare the resulting bounds to the small time
estimates by \cite{Picard1997}. The conjecture
that the bound $\norm{x\PP_\Delta}_\infty\lesssim 1$ is universal for arbitrary
jump behaviour near zero, however, is wrong as the case of a completely
asymmetric (tempered) 1-stable process shows where
$\PP_\Delta(-\Delta\log(1/\Delta))\thicksim \Delta^{-1}$ holds, see the
exceptional case in Example~4.5 of \cite{Picard1997}.

\medskip

If $\int_{\R}|x|\nu(\d x)<\infty$ we can define the drift parameter $\gamma_0 :=
\gamma - \int x \nu(\d x)$.
\begin{assumption}\label{AssPDelta}
 Let $(\sigma^2,\gamma,\nu)$ be a L\'evy  triplet and $\nu^+=\nu{\1}_{\R^+}$, $\nu^-=\nu{\1}_{\R^-}$. Consider the following conditions for the two triplets $(\sigma^2,\gamma,\nu^\pm)$:
 \begin{enumerate}
 \item (diffusive case) $\sigma>0$
 \item (small intensity case) $\sigma=0$, $\gamma_0=0$,
$\norm{x\nu^\pm}_\infty<\infty$
 \item (finite variation case) $\sigma=0$, $\gamma_0=0$, $x\nu^\pm$ admits a
Lebesgue density in $\ell^\infty(\R\setminus[-\eps,\eps])$ for all $\eps>0$,
$\eps^{-1}\int_{-\eps}^\eps \abs{x}\nu^\pm(\d
x)\lesssim \eps\nu(\pm\eps)$ for $\eps\downarrow 0$ and
 \[ \liminf_{\eps\downarrow 0}\inf_{t\in(0,1]}
\frac{(t\eps)^{-1}\int_{\abs{x}\le t\eps} x^2\nu^\pm(\d x)} {\eps^2\nu(\pm
\eps)t\log(t^{-1})}>0
 \]
 \item (infinite variation case) $\sigma=0$, $\nu^{\pm}$~admits a Lebesgue
density,
\begin{align*}
 \frac1{\eps}\int_{-\eps}^\eps
x^2\nu^\pm(\d x)\gtrsim \int_{\abs{x}>\eps}\abs{x}\nu^\pm(\d x)+1\quad\text{ for
}
\eps\in(0,1)
\end{align*}
 \end{enumerate}
\end{assumption}

\begin{proposition}\label{Propx1PDelta}
If each of the triplets $(\sigma^2,\gamma,\nu^\pm)$ of the L\'evy process
satisfies one of the Assumptions \ref{AssPDelta}(i)-(iv), then
$\norm{x\PP_\Delta}_\infty\lesssim 1$ holds uniformly in $\Delta$.
\end{proposition}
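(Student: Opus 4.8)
The plan is to prove $\norm{x\PP_\Delta}_\infty\lesssim 1$ by reducing the two-sided bound to the one-sided statements for $\nu^\pm$ and then treating each of the four cases (i)--(iv) separately. First I would observe that since $\PP_\Delta$ is the law of $L_\Delta=L_\Delta^++L_\Delta^-$, where $L^\pm$ are independent L\'evy processes with triplets $(\sigma^2,\gamma/2,\nu^\pm)$ (splitting the drift and Gaussian part arbitrarily between the two pieces, or putting $\sigma$ entirely into one of them when $\sigma>0$), we have $\PP_\Delta = \PP_\Delta^+ * \PP_\Delta^-$ and hence $x\PP_\Delta = (x\PP_\Delta^+)*\PP_\Delta^- + \PP_\Delta^+*(x\PP_\Delta^-)$ by the product rule $x(P*Q)=(xP)*Q+P*(xQ)$ already used in the proof of Proposition~\ref{Propx3PDelta}. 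Since $\PP_\Delta^\pm$ are probability measures, $\norm{x\PP_\Delta}_\infty \le \norm{x\PP_\Delta^+}_\infty + \norm{x\PP_\Delta^-}_\infty$, so it suffices to bound $\norm{x\PP_\Delta^\pm}_\infty\lesssim 1$ for a L\'evy process whose L\'evy measure is supported on a half-line (and, in the non-diffusive cases, we may also assume $\gamma_0=0$, so that $\PP_\Delta^\pm$ has no drift-induced atom).

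\textbf{Case (i), $\sigma>0$.} Here the Gaussian component dominates near the origin. Writing $\PP_\Delta^\pm = \PP_\Delta^{\sigma} * Q_\Delta$ with $\PP_\Delta^\sigma = N(0,\sigma^2\Delta)$ and $Q_\Delta$ the law of the remaining pure-jump-plus-drift part, we get $x\PP_\Delta^\pm = (x\PP_\Delta^\sigma)*Q_\Delta + \PP_\Delta^\sigma*(xQ_\Delta)$. The Gaussian density satisfies $\norm{x\,p_{\sigma^2\Delta}}_\infty \lesssim 1$ uniformly in $\Delta$ (the maximum of $|x|e^{-x^2/(2\sigma^2\Delta)}/\sqrt{2\pi\sigma^2\Delta}$ is of order $\Delta^{-1/2}\cdot\Delta^{1/2}\sim 1$), and $\norm{p_{\sigma^2\Delta}}_\infty\lesssim\Delta^{-1/2}$. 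Combined with $\int |x|\, Q_\Delta(\d x)\lesssim\Delta$ (from $\int|x|\nu^\pm<\infty$ together with the expansion of $L_\Delta$; note $\gamma_0$ may be nonzero here but $\E|L_\Delta|\lesssim\Delta$ still holds under the moment assumption $\int x^2\nu<\infty$), this gives $\norm{x\PP_\Delta^\pm}_\infty\lesssim 1 + \Delta^{-1/2}\cdot\Delta\lesssim 1$.

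\textbf{Cases (ii)--(iv), $\sigma=0$.} For the \emph{small intensity case} (ii) one uses the elementary bound via the characteristic exponent: $\norm{x\nu^\pm}_\infty<\infty$ together with $\int x^2\nu^\pm<\infty$ forces $\nu^\pm$ to be a finite measure times a bounded density, so the process has finite activity-like tail control; the expansion $\PP_\Delta^\pm = e^{-\lambda\Delta}\delta_0 + \Delta\cdot(\text{density of }\nu^\pm\text{ part}) + O(\Delta^2)$-type reasoning, or a direct Fourier estimate using $|\F^{-1}[\text{id}\cdot\phi_\Delta^\pm]|$, yields $\norm{x\PP_\Delta^\pm}_\infty\lesssim 1$. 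For the \emph{finite} and \emph{infinite variation} cases (iii)--(iv) the proof should proceed by a scaling/self-similarity comparison: the quantitative lower bounds on $\eps^{-1}\int_{|x|\le\eps}x^2\nu^\pm(\d x)$ relative to $\eps^2\nu(\pm\eps)$ (resp.\ relative to $\int_{|x|>\eps}|x|\nu^\pm + 1$) are precisely what is needed to show that $\PP_\Delta^\pm$ ``spreads out'' at scale comparable to a natural deterministic function of $\Delta$, so that its density — and hence $x$ times its density — stays bounded. Concretely, I would split $\nu^\pm = \nu^\pm\1_{|x|\le\eps(\Delta)} + \nu^\pm\1_{|x|>\eps(\Delta)}$ for a suitable $\eps(\Delta)\to 0$, estimate the small-jump part by a Gaussian-type local limit bound (using the second-moment lower bound as a nondegeneracy condition, à la Picard's estimates), and control the large-jump part by a direct convolution argument using $\int_{|x|>\eps}|x|\nu^\pm<\infty$.

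\textbf{Main obstacle.} The routine parts are the two-sided reduction and case (i); the hard part is cases (iii) and (iv), where one must turn the integral lower bounds in Assumption~\ref{AssPDelta} into a genuine uniform-in-$\Delta$ local (sup-norm) bound on $x\PP_\Delta^\pm$. This is where one needs either a careful small-time density expansion in the spirit of \cite{Picard1997} or a Fourier-analytic argument bounding $\norm{x\PP_\Delta^\pm}_\infty \le (2\pi)^{-1}\int |(\Delta\psi^\pm)'(u)|\,|\phi_\Delta^\pm(u)|\,\d u$ and showing the integrand is integrable uniformly in $\Delta$ — the lower bounds on the truncated second moment give the requisite decay of $|\phi_\Delta^\pm(u)| = \exp(\Delta\,\Re\psi^\pm(u))$ because $-\Re\psi^\pm(u)\gtrsim u^2\int_{|x|\lesssim 1/u}x^2\nu^\pm(\d x)$, while the bound on $|(\psi^\pm)'(u)|$ follows from the first-moment-type controls. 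Making the two pieces fit together to give an $O(1)$ integral, uniformly down to $\Delta\to 0$, is the crux of the argument.
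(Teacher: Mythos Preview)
Your overall architecture---reduce to one-sided measures via $x(P*Q)=(xP)*Q+P*(xQ)$, then treat each case separately, with a Fourier-analytic bound $\norm{x\PP_\Delta}_\infty\le(2\pi)^{-1}\int|\Delta\psi'(u)||\phi_\Delta(u)|\,\d u$ driving the hard cases---matches the paper's approach closely, and your diagnosis of where the difficulty lies (cases (iii)--(iv)) is accurate. The paper carries out exactly the Fourier bound you describe in your ``main obstacle'' paragraph for case (iv), and for (iii) it splits $\nu^+=\nu\1_{[0,a_\Delta]}+\nu\1_{(a_\Delta,\infty)}$ at a carefully chosen $\Delta$-dependent threshold, as you anticipate.

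There are, however, two concrete errors in your treatment of the ``easy'' cases. In case (i) you claim $\int|x|\,Q_\Delta(\d x)\lesssim\Delta$; under only $\int x^2\nu<\infty$ this is false---you get $O(\sqrt\Delta)$ from $\E|L_\Delta|\le\sqrt{\Var(L_\Delta)}+|\E L_\Delta|$. Fortunately $\Delta^{-1/2}\cdot\sqrt\Delta=1$, so your bound survives, but the stated reason does not. The paper bypasses this entirely via the one-line Fourier estimate $\norm{x\PP_\Delta}_\infty\le\norm{\phi_\Delta'}_{L^1}\lesssim\int\Delta(1+|u|)e^{-\Delta\sigma^2u^2/2}\,\d u\lesssim 1$.

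In case (ii) your reasoning is genuinely wrong: $\norm{x\nu^\pm}_\infty<\infty$ together with $\int x^2\nu^\pm<\infty$ does \emph{not} force $\nu^\pm$ to be finite (take $\nu(x)=x^{-1}\1_{(0,1)}$), so the compound-Poisson-type expansion you invoke is unavailable. The paper instead exploits the identity $x\PP_\Delta=\Delta(x\nu)*\PP_\Delta$, valid when $\sigma=0$ and $\gamma_0=0$ (it follows from $\phi_\Delta'=\Delta\psi'\phi_\Delta$ and $\psi'(u)=i\F[x\nu](u)$), which immediately gives $\norm{x\PP_\Delta}_\infty\le\Delta\norm{x\nu}_\infty$. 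This identity is also the workhorse in case (iii), applied to the piece $\nu_\Delta^c=\nu\1_{(a_\Delta,\infty)}$, so recognising it would sharpen your plan for that case as well.
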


\subsection{Examples}\label{secExamples}

Let us discuss the applicability of Proposition~\ref{Propx1PDelta} together with the smoothness conditions on the jump measure from Theorem \ref{thmUniform} in a few examples.

 \begin{enumerate}
  \item {\it Diffusion plus compound Poisson process.}\\
  Let $\nu$ be a finite measure on $\R$ with a Lebesgue density. Suppose
$\int_{\R}|x|^{4+\eps}\nu(\d x)<\infty$ for some $\eps>0$ and
$\|x^3\nu\|_\infty<\infty$. Proposition~\ref{Propx1PDelta} yields
$\|x\PP_\Delta\|_\infty\lesssim1$ if either $\gamma_0=0$ and
$\nu(x)\lesssim|x|^{-1}$ as $x\to0$, or if $\sigma>0$.

 For $x^2\nu\in \ell^\infty(\R)$ the global H\"older regularity in Assumption
\ref{assProc}(d) is $s=1$, and for smooth compounding measure $x^2\nu\in
C^r(\R)$ it is satisfied with $s=r+1$.

  \item {\it Self-decomposable L\'evy process.}\\
  The jump measures of self-decomposable L\'evy processes are characterised by
$\nu(\d x)=\frac{k(x)}{|x|}\d x$ for a function $k:\R\to\R_+$ which is
monotonically increasing on the negative half line and decreasing on the
positive one. An explicit example is given by the Gamma process where
$k(x)=ce^{-\lambda x}\1_{\R_+}(x)$ for $c,\lambda>0$. Note that nontrivial
self-decomposable processes have an infinite jump activity. If $k$ is a bounded
function, then Assumption~\ref{AssPDelta}(ii) is fulfilled. The smoothness is
determined by the H\"older regularity of $|x|k(x)$, for instance, Gamma
processes induce regularity $s=2$ at $t=0$ and $C^\infty$ away from the origin.

  \item {\it Tempered stable L\'evy process.}\\
  Let $L$ be a tempered stable process, that is a pure jump process with L\'evy measure given by the Lebesgue density
  \[
    \nu(x)=|x|^{-1-\alpha}\left(c_-e^{-\lambda_-|x|}\1_{(-\infty,0)}(x)+c_+e^{-\lambda_+|x|}\1_{(0,\infty)}(x)\right)
  \]
  with parameters $c_\pm\ge 0,\lambda_\pm>0$ and stability index $\alpha\in(0,2)$. By the exponential tails of~$\nu$ the moment assumptions are satisfied. For the finite variation case $\alpha\in(0,1)$ Assumption~\ref{AssPDelta}(iii) can be verified since $\eps^{-1}\int_{-\eps}^\eps|x|\nu^\pm(x)\d x\sim\eps^{-\alpha}\sim \eps\nu(\pm\eps)$ and the second condition simplifies to $t^{-\alpha}/\log(t^{-1})>0$. In the infinite variation case $\alpha\in(1,2)$ Assumption \ref{AssPDelta}(iv) is satisfied owing to $$\eps^{-1}\int_{-\eps}^\eps x^2\nu^\pm(x)\d x\sim\eps^{1-\alpha}\sim\int_{|x|>\eps}|x|\nu^\pm(x)\d x, ~~\eps\in(0,1).$$

  Outside of a neighbourhood of zero the L\'evy measure is arbitrarily smooth. Due to the cusp of $x^2\nu(x)$ at the origin the global H\"older regularity is in general given by $s=2-\alpha$. In the case $\alpha=1$ and $c_+=c_-$, $x^2\nu$ is already Lipschitz continuous at zero and so $s=2$.

  \item {\it Jump densities regularly varying at zero.}\\
  The first condition in Assumption~\ref{AssPDelta}(iii) holds for regularly varying $\nu$ with $\alpha<1$, that is $\nu^\pm(x)=|x|^{-1-\alpha}l(x)$ with slowly varying $l$ at zero,  by a classical Tauberian theorem \citep[see e.g.][Thm. VIII.9.1]{feller1971}. The second condition then reduces to
  \[ l(t\eps)\ge C_\alpha t^\alpha\log(t^{-1})l(\eps),\quad C_\alpha>0,\]
  uniformly over $t\in(0,1]$ for small $\eps>0$, which is always satisfied for $\alpha>0$. Similarly, Assumption \ref{AssPDelta}(iv) is satisfied  if $\nu^\pm(x)=|x|^{-1-\alpha}l(x)$ holds with $\alpha\in(1,2)$ and a slowly varying function $l$ at zero.
\end{enumerate}

\section{Proofs}

We collect the proofs for Sections~\ref{secEstimators}, \ref{secProc}, \ref{discussion}. Theorem~\ref{main1} is proved in the next section.

\subsection{Proof of Lemma \ref{weakcon}}
The result is a standard -- for convenience of the reader we include a short proof. Using
the L\'evy--Khintchine formula (\ref{eqLevyKhintchine}) we see
\begin{equation} \label{var}
  c_\Delta:=\Delta^{-1}\E[X_1^2]= -\Delta^{-1}\phi_{\Delta}''(0) = -\psi''(0)- \Delta (\psi'(0))^2 \to\sigma^2+\int x^2 \nu(\d x)=:c
\end{equation}
as $\Delta\to0$. The characteristic function of the probability measure
$c_\Delta^{-1}x^2\Delta^{-1}\PP_\Delta(\d x)$ converges pointwise to the
characteristic function of $c^{-1}(\sigma^2 \delta_0+ x^2\nu(\d x))$ as
$\Delta\to0$ since
\begin{align*}
 \frac{1}{c_\Delta \Delta}\int e^{iux}x^2 \PP_\Delta(\d x)=-\frac{1}{c_\Delta
\Delta}\phi_\Delta''(u)&=-\frac{1}{c_\Delta \Delta}(\Delta
\psi''(u)+\Delta^2(\psi'(u))^2)e^{\Delta\psi(u)}\\
&\to \frac{1}{c}(\sigma^2+\int
e^{iux}x^2 \nu(\d x)).
\end{align*}
Therefore, we obtain \eqref{eqWeakConvNu} from L\'evy's continuity theorem.

\subsection{Proof of Theorem \ref{thm:UCLTnaive}}

Using decomposition (\ref{eq:decomp}), Theorem \ref{thm:UCLTnaive} follows from Theorem \ref{main1} with $m=1$ (which trivially satisfies Assumption \ref{multass}(a)), if we can show that the `bias' term $B(t)$ is asymptotically negligible uniformly in $t \in \R$. This is achieved in the following proposition.

\begin{prop}
\label{prop:bias} Grant the assumptions of Theorem~\ref{thm:UCLTnaive}. Then it
holds
\[
\sup_{t\in\R}|B(t)|=\sup_{t\in\R}\Big|\int
g_{t}(x)\big(\Delta^{-1}x^{2}\PP_{\Delta}(\mathrm{d}x)-x^{2}
\nu(\mathrm{d}x)\big)\Big|=\mathcal{O}(\Delta^{s/2}).
\]
\end{prop}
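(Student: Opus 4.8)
The plan is to bound the bias term $B(t)=\int g_t(x)\big(\Delta^{-1}x^2\PP_\Delta(\d x)-x^2\nu(\d x)\big)$ by splitting the inner signed measure into the contribution that concentrates near the origin (governed by Lemma~\ref{weakcon} and the structure assumptions a)/b)) and the contribution away from the origin (governed by an approximate-identity argument using Assumption~\ref{assProc}(d)). The starting point is equation \eqref{eqX2P} from the proof of Proposition~\ref{Propx3PDelta}, which gives $x^2\PP_\Delta=\Delta\,\nu_\sigma\ast\PP_\Delta+4\,(x\PP_{\Delta/2})\ast(x\PP_{\Delta/2})$ with $\nu_\sigma=\sigma^2\delta_0+x^2\nu$; since $\sigma=0$ here this reads $\Delta^{-1}x^2\PP_\Delta=(x^2\nu)\ast\PP_\Delta+4\Delta^{-1}(x\PP_{\Delta/2})\ast(x\PP_{\Delta/2})$. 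Hence
\[
B(t)=\int g_t(x)\Big((x^2\nu)\ast\PP_\Delta-x^2\nu\Big)(\d x)\;+\;4\Delta^{-1}\int g_t(x)\,(x\PP_{\Delta/2})\ast(x\PP_{\Delta/2})(\d x),
\]
and the two pieces are treated separately.

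For the first piece, view $\PP_\Delta$ as an approximate identity: writing it out and using Fubini,
\[
\int g_t(x)\Big((x^2\nu)\ast\PP_\Delta-x^2\nu\Big)(\d x)=\int\Big(\big(g_t(\cdot+y)\ast_{\text{refl}}(x^2\nu)\big)-\big(g_t\ast_{\text{refl}}(x^2\nu)\big)\Big)\,\PP_\Delta(\d y),
\]
i.e.\ a difference of the function $G_t:=g_t(-\cdot)\ast(x^2\nu)$ evaluated at shifted arguments, integrated against $\PP_\Delta$. By Assumption~\ref{assProc}(d), $G_t\in C^s(U)$ on a neighbourhood $U$ of the origin with $\|G_t\|_{C^s(U)}\le c$, so locally $|G_t(y)-G_t(0)|\lesssim |y|^{s\wedge 1}$ (and, if $s>1$, one gains an extra order by subtracting the first-order Taylor term, which vanishes against the $\PP_\Delta$-expectation only up to the drift — see below). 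Since $\PP_\Delta$ has vanishing mean up to $\Delta\gamma$ and $\E_{\PP_\Delta}|X|^{s\vee 1}\lesssim \Delta^{(s\vee 1)/2}$ by the moment assumptions (Assumption~\ref{assProc}(a) together with the standard small-time scaling of $\PP_\Delta$), this contributes $O(\Delta^{s/2})$ uniformly in $t$, \emph{provided} the drift is handled: in case a) we have $\gamma_0=0$, killing the discrete atom and letting the first-order term integrate to zero; in case b) the regular-variation restriction $s\in(0,2-\beta)$ keeps $s$ small enough that only the $|y|^{s}$-bound (with $s<1$ when $\beta>1$, or else a direct argument) is needed and the drift term is absorbed. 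The mass of $\PP_\Delta$ outside $U$ is $O(\Delta)$ by Markov's inequality and the moment bound, and there $|G_t|$ is uniformly bounded (since $x^2\nu$ is a finite measure under \eqref{eqSecMom} and $g_t$ is bounded), so that tail is negligible.

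For the second (bilinear) piece $4\Delta^{-1}\int g_t\,(x\PP_{\Delta/2})\ast(x\PP_{\Delta/2})$, note that $x\PP_{\Delta/2}$ is a signed measure with total variation $\|x\PP_{\Delta/2}\|_{L^1}=\E|X_{\Delta/2}|\lesssim\Delta^{1/2}$ and, crucially under case a), of bounded variation as a function (since $x\nu$ and hence $x\PP_\Delta$ is) or, under case b), controlled via the regular-variation structure; in either case one shows $\|(x\PP_{\Delta/2})\ast(x\PP_{\Delta/2})\|$ in the relevant norm is $O(\Delta^{1+s/2})$ after pairing against the Hölder/bounded-variation function $g_t$, using the same $C^s(U)$-type smoothness of the relevant convolution from Assumption~\ref{assProc}(d). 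Multiplying by $\Delta^{-1}$ leaves $O(\Delta^{s/2})$ uniformly in $t$.

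The main obstacle is the uniformity in $t\in\R$ combined with the low regularity regime: the Hölder constant in Assumption~\ref{assProc}(d) is assumed uniform ($\sup_t c_t=c<\infty$), but one must still check that the approximate-identity estimate genuinely holds with a \emph{single} $\Delta$-power $\Delta^{s/2}$ and not $\Delta^{\min(s,1)/2}$ — this is exactly why cases a) and b) are imposed, to control the first-order (drift) term when $s>1$ and to cover the genuinely non-smooth infinite-activity case $s<1$ respectively. I expect the delicate bookkeeping to be in matching the decomposition of $x^2\PP_\Delta$ against the two sub-cases so that the drift contribution $\Delta\gamma_0$ never enters (case a) via $\gamma_0=0$; case b) via smallness of $s$ and a direct regular-variation estimate), and in verifying the small-time moment bound $\E_{\PP_\Delta}|X|^{r}\lesssim\Delta^{r/2}$ for the fractional exponent $r=s\vee 1$ uniformly in $\Delta$, which follows from Assumption~\ref{assProc}(a) and the L\'evy--It\^o structure but should be stated carefully.
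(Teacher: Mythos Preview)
Your decomposition is exactly the paper's: writing $\Delta^{-1}x^2\PP_\Delta=(x^2\nu)\ast\PP_\Delta+4\Delta^{-1}(x\PP_{\Delta/2})^{\ast 2}$ is the time-domain version of the paper's split $B=B_1+B_2$ with $B_2=\int g_t\big((x^2\nu)\ast\PP_\Delta-x^2\nu\big)$ and $B_1=-\frac{\Delta}{2\pi}\int\F g_t(-u)\psi'(u)^2\phi_\Delta(u)\,\d u$ (the latter being the Fourier form of your bilinear piece, since $(\Delta\psi'\phi_{\Delta/2})^2=(\phi_{\Delta/2}')^2=-\F[(x\PP_{\Delta/2})^{\ast 2}]$). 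Your treatment of the approximate-identity piece $B_2$ is essentially the paper's argument; one small correction: the first-order Taylor term contributes $h_t'(0)\int y\,\PP_\Delta(\d y)=h_t'(0)\,\Delta\gamma=O(\Delta)$, which is already $O(\Delta^{s/2})$ for $s\le 2$ regardless of $\gamma_0$, so the condition $\gamma_0=0$ plays no role here---it is needed only for $B_1$.

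The gap is in the bilinear piece under case b). Your appeal to Assumption~\ref{assProc}(d) is misplaced: that assumption controls $g_t(-\cdot)\ast(x^2\nu)$, which is unrelated to the pairing $\int g_t\,(x\PP_{\Delta/2})^{\ast 2}$. In case a) the identity $x\PP_{\Delta/2}=\tfrac{\Delta}{2}(x\nu)\ast\PP_{\Delta/2}$ (valid precisely because $\gamma_0=0$, so $\psi'=i\F[x\nu]$) does give $4\Delta^{-1}(x\PP_{\Delta/2})^{\ast 2}=\Delta\,(x\nu)^{\ast 2}\ast\PP_\Delta$ and hence $|B_1|\le\Delta\|g_t\|_\infty\|x\nu\|_{L^1}^2=O(\Delta)$, matching the paper. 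But in case b) this identity fails ($\gamma_0$ is undefined for $\beta\ge 1$), and the crude bound $\E|Y Y'|\le\E[Y^2]=O(\Delta)$ yields only $O(1)$ after dividing by $\Delta$. The paper instead stays on the Fourier side and exploits regular variation through two quantitative inputs: Orey's bound $|\phi_\Delta(u)|\lesssim\exp(-c\Delta|u|^{\beta^-})$ for $\beta^-<\beta$, and $|\psi'(u)|\lesssim 1+|u|^{(\beta^+-1)\vee 0}$ for $\beta^+>\beta$; combined with $|\F g_t(u)|\lesssim(1+|u|)^{-1}$ and a substitution $u=\Delta^{-1/\beta^-}z$, this gives $|B_1(t)|\lesssim\Delta^p$ for any $p<\tfrac{2-\beta}{\beta}\vee 1$, which dominates $\Delta^{s/2}$ since $s<2-\beta$. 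Your phrase ``controlled via the regular-variation structure'' needs to become precisely this Fourier argument; a purely real-side route would have to reproduce the decay of $\phi_\Delta$ by other means and does not follow from the ingredients you cite.
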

\begin{proof}
We decompose the bias into
\begin{align}
B(t)&=  \int
g_{t}(x)\big(\Delta^{-1}x^{2}\PP_{\Delta}(\mathrm{d}x)-((x^2\nu)\ast\PP_{
\Delta})(\d x)\big)\nonumber \\
 & \qquad+\int g_{t}(x)\big(((x^{2}\nu)\ast\PP_{\Delta})(\d x)-x^{2}\nu(\d
x)\big)\nonumber \\
&=:  B_{1}(t)+B_{2}(t).\label{eq:BiasDecomp}
\end{align}

We start with the first term $B_1(t)$: Using $-(\F f)''=\F[x^{2}f]$ for any
function $f$ satisfying $(1\vee x^{2})f\in
L^{1}(\R)$,
we have
\[
-\F\big[x^{2}\PP_{\Delta}(\mathrm{d}x)]=\phi_{\Delta}
''=(\Delta\psi''+(\Delta\psi')^{2})\phi_{\Delta}.
\]
Plancherel's identity and $\psi''=-\F[x^2\nu]$ then gives
\begin{align*}
B_{1}(t)&=  \frac{1}{2\pi}\int\F
g_{t}(-u)\F\big[\Delta^{-1}x^{2}\PP_{\Delta}(\mathrm{d}x)-((x^2\nu)\ast\PP_{
\Delta})(\d x)\big](u)\,\d u\\
&=  \frac{1}{2\pi}\int\F
g_{t}(-u)\big(-\Delta^{-1}\phi_{\Delta}''(u)-\F[x^2\nu](u)\phi_{\Delta}
(u)\big)\,\d u.\\
&=  -\frac{\Delta}{2\pi}\int\F g_{t}(-u)\psi'(u)^{2}\phi_{\Delta}(u)\,\d u.
\end{align*}
The proofs below will imply that the last integral exists, which in particular justifies the preceding manipulations. We shall repeatedly use that $\sup_{t}\|g_{t}\|_{L^{1}}\le \|\rho\|_{L^{1}}$ and
$\sup_{t}\|g_{t}\|_{BV}\le\|\rho\|_{BV}$ imply
 \begin{equation} \label{boringbound}
 |\F g_{t}(u)|\lesssim(1+|u|)^{-1},u\in\R,
 \end{equation}
uniformly in $t \in \R$. In case a) we can use (\ref{boringbound}),
$\|\phi_\Delta\|_\infty =1$, $|\F[x\nu](u)| \lesssim(1+|u|)^{-1}$, the
hypothesis $\gamma_0=0$ and the resulting identity
$$\psi'(u) = i \F[x\nu](u) - i \left(\int x \nu(\d x) - \gamma \right) = i \F[x \nu](u)$$
to bound
$$|B_1(t)| \le \frac{\Delta}{2\pi}\int |\F g_{t}(-u)||\F[x \nu](u)|^{2}|\phi_{\Delta}(u)|\,\d u  \lesssim \Delta \int \frac{1}{(1+|u|)^3} \d u =O(\Delta).$$
For case b) we will show that
\begin{equation}
\sup_{t\in\R}|B_{1}(t)|\lesssim\Delta^{p}\quad\text{for any}\quad
p\in\big(0,\frac{2-\beta}{\beta}\vee1\big).\label{eq:b1}
\end{equation}
By assumption $\tilde\nu(x)=\nu^+(x)+\nu^-(-x)$ is regularly varying at zero
with
exponent $-(\beta+1)$ and so the function $H(r):=\int_{|x|<r}x^2\nu(x)\d
x=\int_0^r x^2 \tilde\nu(x)\d x$ is regularly varying with exponent $(2-\beta)$
by a Tauberian theorem \citep[Thm. VIII.9.1]{feller1971}. Especially we can
bound $H(r)$ from below, more precisely for any $\beta^-\in(0,\beta)$ there
exists $r_0>0$ such that $H(r)\gtrsim r^{2-\beta^-}$ for all $r\in(0,r_0)$.
By~\citet{orey1968}
there is a constant $c>0$ such that
\begin{equation}\label{eq:phibound}
 |\phi_{\Delta}(u)|\lesssim\exp(-c\Delta|u|^{\beta^{-}})
\end{equation}
for $|u|$ sufficiently large.
On the other hand, it is easily seen that
\begin{equation}\label{eq:psibound}
 |\psi'(u)|\lesssim1+|u|^{(\beta^{+}-1)\vee0}
\end{equation}
for any $\beta^{+}\in(\beta,2)$ and that $|\psi''(u)|$ is bounded.
Especially we have
$\phi_{\Delta},\phi_{\Delta}''\in L^{2}(\R)$.
Collecting the above and using (\ref{boringbound}) implies
\[
\sup_{t\in\R}|B_{1}(t)|\lesssim\Delta\int(1+|u|)^{-1}|\psi'(u)|^{2}|\phi_{\Delta
}(u)|\,\d u.
\]
Let us distinguish the cases $\beta\ge1$
and $\beta<1$, which will yield together (\ref{eq:b1}). We will be using the
bounds for $\phi_\Delta$ and $\psi'$ in \eqref{eq:phibound} and
\eqref{eq:psibound}, respectively.
\begin{enumerate}
\item For $\beta\ge1$ substituting $u=\Delta^{-1/\beta^{-}}z$ yields
\begin{align*}
\sup_{t\in\R}|B_{1}(t)|&\lesssim
\Delta\int(1+|u|)^{(2\beta^{+}-3)}\exp(-c\Delta|u|^{\beta^{-}})\,\d u\\
&\lesssim
\Delta^{(\beta^{-}-2\beta^{+}+2)/\beta^{-}}\int((1+|z|)^{2\beta^{+}-3}\vee|z|^{
2\beta^{+}-3})\exp(-c|z|^{\beta^{-}})\,\d z,
\end{align*}
where the integral in the last display is finite owing to $\beta^{+}>1$.
Noting that $2\beta^{+}-\beta^{-}>\beta$, we conclude that
$|B_{1}|\lesssim\Delta^{p}$
for any $p<(2-\beta)/\beta$.
\item For $0<\beta<1$ boundedness of $|\psi'|$ and the same substitution
yields for any $\delta>0$
\begin{align*}
\sup_{t\in\R}|B_{1}(t)|&\lesssim
\Delta\int(1+|u|)^{-1}\exp(-c\Delta|u|^{\beta^{-}})\,\d u\\
&\le
\Delta^{1-1/\beta^{-}}\int(1+\Delta^{-1/\beta^{-}}|z|)^{-1+\delta}\exp(-c|z|^{
\beta^{-}})\,\d z\\
&\le  \Delta^{1-\delta/\beta^{-}}\int|z|^{-1+\delta}\exp(-c|z|^{\beta^{-}})\,\d
z.
\end{align*}
By choosing $\delta$ sufficiently small, we obtain $|B_{1}|\lesssim\Delta^{p}$
for any $p<1$.
\end{enumerate}
Let us now consider $B_{2}$ in (\ref{eq:BiasDecomp}) which we can
write as
\begin{align*}
B_{2}(t)= & \int\int\big(g_{t}(x+y)-g_{t}(x)\big)x^{2}\nu(\d x)\PP_{\Delta}(\d
y)\\
= &
\int\big((g_{t}(-\bull)\ast(x^{2}\nu))(-y)-(g_{t}(-\bull)\ast(x^{2}
\nu))(0)\big)\PP_{\Delta}(\d y).
\end{align*}
For the sake of brevity we define $h_{t}(y):=(g_{t}(-\bull)\ast(x^{2}\nu))(y)$.
We decompose the integration domain into the neighbourhood of the
origin $(-U,U)$ and the tails $\{y:|y|\ge U\}$. For small $y$ the
uniform H\"older regularity of $h_{t}(y)$, for $|y|<U$, as well as
$\E[|X_{1}|^{2}]\lesssim\Delta$ and Jensen's inequality yield for $s\le1$
\[
\sup_{t\in\R}\Big|\int_{|y|<U}\big(h_{t}(-y)-h_{t}(0)\big)\PP_{\Delta}(\d
y)\Big|\lesssim\int_{\R}|y|^{s}\PP_{\Delta}(\d
y)=\E[|X_{1}|^{s}]\lesssim\Delta^{s/2}.
\]
and for $s>1$ with $x_y\in[-y,0]$ an intermediate point from the mean value
theorem
\begin{align*}
 &\sup_{t\in\R}\Big|\int_{|y|<U}\big(h_{t}(-y)-h_{t}(0)\big)\PP_{\Delta}(\d
y)\Big|\\
&\le
\sup_{t\in\R}\Big|\int_{|y|<U}\big(h_{t}'(x_y)-h_{t}'(0)\big)y\PP_{\Delta}(\d
y)\Big|
+
\sup_{t\in\R}\Big|\int_{|y|<U}h_{t}'(0)y\PP_{\Delta}(\d
y)\Big|\\
&\le
\sup_{t\in\R}\Big|\int_{|y|<U}|y|^s\PP_{\Delta}(\d
y)\Big|
+
\Big|\int_{\R}y\PP_{\Delta}(\d
y)\Big|\sup_{t\in\R}\left|h_{t}'(0)\right|
+
\Big|\int_{|y|\ge U}y^2 U^{-1}\PP_{\Delta}(\d
y)\Big|\sup_{t\in\R}\left|h_{t}'(0)\right|\\
&\lesssim\Delta^{s/2}+\Delta+\Delta\lesssim\Delta^{s/2}.
\end{align*}
For the tails we conclude from
$\sup_{t}\|h_{t}\|_{\infty}\le\|\rho\|_{\infty}\int x^{2}\nu(\d x)$
and Markov's inequality
\[
\sup_{t\in\R}\Big|\int_{|y|\ge U}\big(h_{t}(-y)-h_{t}(0)\big)\PP_{\Delta}(\d
y)\Big|\lesssim\PP_{\Delta}(|X_{1}|\ge U)\le
U^{-2}\E[|X_{1}|^{2}]\lesssim\Delta.
\]
The previous two estimates finally yield
$\sup_{t}|B_{2}(t)|\lesssim\Delta^{s/2}$.
\end{proof}

\subsection{Proof of Theorem \ref{thm:UCLTnaive2}}

We only prove the case $V=(-\infty, -\zeta]$, the general case follows from
symmetry arguments that are left to the reader. We use decomposition
(\ref{eq:decomp}) and apply Theorem \ref{main1} -- with $m=1$ and $\rho$
suitably chosen such that $\rho(x)=x^{-2}$ for all $x \in (-\infty, -\zeta]$ --
to the stochastic term $S(t)$. For our choice of $\Delta_n$ the bias term $B(t)$
is negligible in the asymptotic distribution in view of Proposition 2.1 in
\cite{Figueroa-Lopez2011} (which holds also for unbounded $V$ separated away
from the origin, as inspection of that proof shows).

\subsection{Proof of Theorem \ref{thmUniform}}

For Theorem~\ref{thmUniform}\ref{away} we choose a suitable $\rho$ such that
$\rho(x)=x^{-2}$ on $V$ and we restrict to the case $(-\infty,-\zeta]$ since
the proof can be easily extended to cover the general case by symmetry
arguments.
We use the decomposition (\ref{eqErrorDecomp}). The third term is negligible in
view (\ref{eq:sigmarate}). Recalling
\[
  g_t(x)=\rho(x)\1_{(-\infty,t]}(x),\quad t\in\R.
\]
the following result shows that the deterministic approximation error is negligible in the asymptotic distribution of
$\sqrt{n\Delta_n}(\hat N_n-N_\rho)$ whenever $h^{s}=o(1/\sqrt{n\Delta_n})$, valid for our choice of $\Delta_n$.
\begin{proposition}\label{propBias}
   Suppose $x^2\nu$ is a finite measure satisfying Assumption~\ref{assProc}(d).
If the kernel satisfies \eqref{propKernel} with order $p\ge s$, then
  \[
    \Big|\int_{\R} g_{t}(x)\big(K_h\ast\big(y^2\nu(\d y)\big)-x^2\nu\big)(\d
x)\Big|\lesssim c_t h^{s},
  \]
  with constants independent of $t$.
\end{proposition}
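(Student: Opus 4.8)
The plan is to recognise the quantity $\int_{\R} g_t(x)\big(K_h\ast(y^2\nu(\d y))-x^2\nu\big)(\d x)$ as the difference between the distribution function of $x^2\nu$, evaluated against the weight $\rho$, and the same object with $x^2\nu$ replaced by its kernel-smoothed version $K_h\ast(y^2\nu)$. Writing $\mu(\d y) := y^2\nu(\d y)$ for the finite measure and $F_\mu(t):=\int g_t(x)\mu(\d x) = \int \rho(x)\1_{(-\infty,t]}(x)\mu(\d x)$, the integral in question equals $\int g_t(x)(K_h\ast\mu)(\d x) - F_\mu(t)$. First I would unfold the convolution and use Fubini: $\int g_t(x)(K_h\ast\mu)(\d x) = \int\!\!\int g_t(x+y)K_h(y)\,\d y\,\mu(\d x) = \int (g_t(-\cdot)\ast (x^2\nu))$-type expressions — more precisely, it is cleanest to write the error as $\int K_h(y)\big[(g_t(-\cdot)\ast\mu)(-y) - (g_t(-\cdot)\ast\mu)(0)\big]\,\d y$, using $\int K_h = 1$ to subtract the constant. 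Denote $h_t(y) := (g_t(-\cdot)\ast\mu)(y) = (g_t(-\cdot)\ast(x^2\nu))(y)$; Assumption~\ref{assProc}(d) says precisely that $\|h_t\|_{C^s(U)}\le c_t$ on a neighbourhood $U$ of the origin.

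The main step is then a standard kernel-approximation (Bramble--Hilbert / Taylor-with-remainder) estimate: because $K$ has vanishing moments up to order $p\ge s$ and $x^{p+1}K(x)\in L^1$, and because $h_t$ is $C^s$ near $0$, a Taylor expansion of $h_t$ around $0$ up to order $\lfloor s\rfloor$ shows that all polynomial terms are annihilated by the moment conditions on $K_h$ (after the substitution $y\mapsto hz$, the $l$-th term carries $\int z^l K(z)\,\d z = 0$ for $1\le l\le p$), leaving only the remainder term, which is controlled by the Hölder modulus of the top derivative $h_t^{(\lfloor s\rfloor)}$ and hence is $\lesssim c_t\, h^s\int |z|^s |K(z)|\,\d z \lesssim c_t h^s$. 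For $s\le 1$ one skips the Taylor expansion and bounds $|h_t(-y)-h_t(0)|\le c_t|y|^s$ directly. The contribution of $y$ outside $U$ needs a separate, crude bound: since $\supp\F K\subset[-1,1]$ forces $K$ to have only polynomially decaying tails rather than compact support, one uses $x^{p+1}K(x)\in L^1$ together with $\sup_t\|h_t\|_\infty \le \|\rho\|_\infty\,\mu(\R)<\infty$ (finiteness of $\mu=x^2\nu$ is exactly the hypothesis) to get $\int_{|y|\ge U}|K_h(y)|\,|h_t(-y)-h_t(0)|\,\d y \lesssim \|h_t\|_\infty\int_{|y|\ge U}|K_h(y)|\,\d y \lesssim h^{p}\cdot U^{-p}\|x^{p}K\|_{L^1}\cdot\dots$, which is $o(h^s)$ (indeed $O(h^p)$ with $p\ge s$) since $U$ is fixed; more carefully one splits $|h_t(-y)-h_t(0)|\le |h_t(-y)|+|h_t(0)|$ and, if a cleaner rate is wanted, uses $|h_t(y)-h_t(0)|\le \|h_t'\|_\infty\,|y|$ on the part of the tail where $h_t$ is still differentiable — but the gross bound already suffices for the stated $O(c_t h^s)$.

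The main obstacle I anticipate is the tail term: because the band-limited kernel $K$ is not compactly supported, the "outside $U$" piece is not literally zero and must be absorbed into $O(h^s)$ by a quantitative decay estimate on $K$, which is why the hypothesis $x^{p+1}K(x)\in L^1$ in \eqref{propKernel} (rather than merely $x^{p+1}K\in L^1$ near infinity for the smoothness side) is essential. Everything else is a routine Taylor-remainder computation, with the moment cancellation of $K$ doing the work and the constant in the final bound being proportional to $c_t$ uniformly in $t$ because $c_t$ is the only $t$-dependent quantity entering (via Assumption~\ref{assProc}(d)), while $\|\rho\|_\infty$, $\mu(\R)$, and the kernel norms are all $t$-independent.
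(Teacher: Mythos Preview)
Your proposal is correct and follows essentially the same route as the paper: you rewrite the bias as $\int K_h(y)\big[h_t(-y)-h_t(0)\big]\,\d y$ with $h_t=g_t(-\cdot)\ast(x^2\nu)$ (the paper states this as $K_h\ast h_t(0)-h_t(0)$ via Fubini) and then invoke the order-$p$ moment conditions on $K$ together with the local $C^s$-regularity of $h_t$ from Assumption~\ref{assProc}(d). The paper compresses the remaining step into the phrase ``standard Taylor expansion argument using the order $p$ of the kernel'', whereas you spell out both the Taylor-remainder cancellation on the neighbourhood $U$ and the tail control via $x^{p+1}K\in L^1$ and $\sup_t\|h_t\|_\infty\le\|\rho\|_\infty\int x^2\nu(\d x)<\infty$; this extra care is appropriate since $K$ is band-limited and hence not compactly supported.
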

\begin{proof}
  Using Fubini's theorem,
    \begin{align}\label{eqRewrBias}
    \int_{\R} g_{t}(x)\big(K_h\ast\big(y^2\nu(\d y)\big)-x^2\nu\big)(\d x)
    =K_h\ast g_{t}(-\bull)\ast(x^2\nu)(0)-g_{t}(-\bull)\ast(x^2\nu)(0).
  \end{align}
The result now follows from Assumption \ref{assProc}(d) and a standard Taylor
expansion argument using the order $p$ of the kernel.
\end{proof}

\smallskip

The second, stochastic, term in (\ref{eqErrorDecomp}) can be reduced to the linear term from Proposition \ref{propRem}, which is proved as follows:

\begin{proof}[Proof of Proposition~\ref{propRem}]
To linearise
$\psi''-\hat\psi_n''=-\Delta^{-1}\log(\phi_{\Delta,n}/\phi_\Delta)''$, we set
$F(y)=\log(1+y)$, $\eta=(\phi_{\Delta,n}-\phi_\Delta)/\phi_\Delta$, and use
\begin{align*}
  (F\circ \eta)''(u)&=F'(\eta(u))\eta''(u)+F''(\eta(u))\eta'(u)^2\\
&=F'(0)\eta''(u)+O\Big(\norm{F''}_\infty\Big(\norm{\eta}_\infty\norm{\eta''}
_\infty
  +\norm{\eta'}_\infty^2\Big)\Big).
\end{align*}
On the event $\Omega_n:=\{\sup_{|u|\le
1/h}\abs{(\phi_{\Delta,n}-\phi_\Delta)(u)/\phi_\Delta(u)}\le 1/2\}$ we thus
obtain
\begin{align*}
  &\sup_{\abs{u}\le h^{-1}}
\big|\log(\phi_{\Delta,n}/\phi_\Delta)''(u)-(\phi_\Delta^{-1}(\phi_{\Delta,n}
-\phi_\Delta))''(u)\big|\\
&\qquad=O\Big(\norm{\eta}_{\ell^\infty[-h^{-1},h^{-1}]}\norm{\eta''}_{\ell^\infty[-h^{
-1},h^{-1}]} +\norm{\eta'}_{\ell^\infty[-h^{-1},h^{-1}]}^2\Big).
\end{align*}
To estimate $\norm{\eta^{(k)}}_{\ell^\infty[-h^{-1},h^{-1}]},k=0,1,2$, we note
$\abs{\psi'(u)}\lesssim 1+\abs{u}$, $\abs{\psi''(u)}\lesssim 1$ and $h\gtrsim
\Delta^{1/2}$
\[ \sup_{u\in[-h^{-1},h^{-1}]}\abs{(\phi_\Delta^{-1})'(u)}\lesssim \Delta
h^{-1}\lesssim\Delta^{1/2},\quad
\sup_{u\in[-h^{-1},h^{-1}]}\abs{(\phi_\Delta^{-1})''(u)}\lesssim
\Delta^2h^{-2}+\Delta\lesssim\Delta.
\]
Moreover, from Theorem 1 by \cite{kappusReiss2010} we know that under our moment
assumption on $\nu$ (for $k=0,1,2$ and any $\delta>0$)
\begin{equation}\label{eqJM}
  \norm{(\phi_{\Delta,n}-\phi_\Delta)^{(k)}}_{\ell^\infty[-h^{-1},h^{-1}]}
=O_P(n^{-1/2}\Delta^{(k\wedge 1)/2}(\log h^{-1})^{(1+\delta)/2}).
\end{equation}
This yields for $k=0,1,2$
\begin{align*}
 \|\eta^{(k)}\|_{\ell^\infty[-h^{-1},h^{-1}]}&=O_P\big(n^{-1/2}\Delta^{k/4}(\log
h^{-1})^{(1+\delta)/2}\big).
\end{align*}
In combination with $n(\log h^{-1})^{-1-\delta}\gtrsim
n\Delta^{3(1+\delta)/4}\to\infty$ for $\delta\in(0,1/3)$ and $|1/\varphi_\Delta|
\lesssim 1$ on $[-1/h_n, 1/h_n]$ the bound \eqref{eqJM} shows also
$\PP(\Omega_n)\to 1$ and then
\[ \sup_{\abs{u}\le h^{-1}}
\abs{\hat\psi_n''(u)-\psi''(u)-\Delta^{-1}(\phi_\Delta^{-1}(\phi_{\Delta,n}
-\phi_\Delta))''(u)}
=O_P(n^{-1}\Delta^{-1/2}\log(h^{-1})^{1+\delta}).
\]
We decompose the linearised stochastic error into
\[
  (\phi_\Delta^{-1}(\phi_{\Delta,n}-\phi_\Delta))''
  =\phi_\Delta^{-1}(\phi_{\Delta,n}-\phi_\Delta)''
   +2(\phi_\Delta^{-1})'(\phi_{\Delta,n}-\phi_\Delta)'
   +(\phi_\Delta^{-1})''(\phi_{\Delta,n}-\phi_\Delta).
\]
By the previous estimates we have
\begin{align*}
\sup_{\abs{u}\le
h^{-1}}\abs{(\phi_\Delta^{-1})'(\phi_{\Delta,n}-\phi_\Delta)'}(u)&=O_P(
\Delta h^{-1}n^{-1/2}\Delta^{1/2}(\log h^{-1})^{(1+\delta)/2}),\\
\sup_{\abs{u}\le
h^{-1}}\abs{(\phi_\Delta^{-1})''(\phi_{\Delta,n}-\phi_\Delta)}(u)&=O_P(
(\Delta^2 h^{-2}+\Delta)n^{-1/2}(\log h^{-1})^{(1+\delta)/2}).
\end{align*}
Inserting the asymptotics in $h$, we conclude
\begin{align*}
  &\quad\sup_{\abs{u}\le h^{-1}}
\abs{\hat\psi_n''(u)-\psi''(u)-\Delta^{-1}\phi_\Delta^{-1}(\phi_{\Delta,n}
-\phi_\Delta)''(u)}\\
  &\le\sup_{\abs{u}\le
h^{-1}}2\Delta^{-1}\abs{(\phi_\Delta^{-1})'(\phi_{\Delta,n}-\phi_\Delta)'}(u)
  +\sup_{\abs{u}\le
h^{-1}}\Delta^{-1}\abs{(\phi_\Delta^{-1})''(\phi_{\Delta,n}-\phi_\Delta)}(u)\\
  &\qquad+O_P\big(n^{-1}\Delta^{-1/2}\log(h^{-1})^{1+\delta}\big)\\
  &=O_P\left(n^{-1/2}\Delta^{-1/2}h^{1/2}\left(\Delta
h^{-3/2}+\Delta^{3/2}h^{-5/2}+\Delta^{1/2}h^{-1/2}\right)(\log
h^{-1})^{(1+\delta)/2}\right)\\
  &\qquad+O_P\big(n^{-1}\Delta^{-1/2}\log(h^{-1})^{1+\delta}\big)\\
  &=o_P\big(n^{-1/2}\Delta^{-1/2}h^{1/2}\big).
\end{align*}
By the Plancherel formula and Cauchy-Schwarz inequality we have
\begin{align*}
&\babs{\int g_t(x)\F^{-1}\Big[\mathcal F
K_h(u)\big(\hat\psi_n''(u)-\psi''(u)-\Delta^{-1}\phi_\Delta(u)^{-1}(\phi_{\Delta
,n}-\phi_\Delta)''(u)\big)\Big]
(x)\d x}\\
 &\le
\norm{\mathcal F g_t}_{L^2}\norm{\mathcal F K_h}_{L^2}\sup_{\abs{u}\le
h^{-1}}\abs{\hat\psi_n''(u)-\psi''(u)-\Delta^{-1}\phi_\Delta(u)^{-1}(\phi_{
\Delta,n}-\phi_\Delta)''(u)}\\
&=o_P(n^{-1/2}\Delta^{-1/2}).\qedhere
\end{align*}
\end{proof}

\smallskip

Finally, to the main stochastic term
\begin{align*}
M_{\Delta,n} &=\Delta_n^{-1}\int g_t(x)
\big(\F^{-1}[\phi_{\Delta_n}^{-1}\mathcal
F K_{h_n}]\ast (x^2(\PP_{\Delta_n,n}-\PP_{\Delta_n}))\big)(\d x)\\
&=\Delta_n^{-1}\int {\cal F}^{-1}[\phi_{\Delta_n}^{-1}(-u)\mathcal F
K_{h_n}(-u){\cal F}g_t(u)](x)x^2(\PP_{\Delta_n,n}-\PP_{\Delta_n})(\d x),
\end{align*}
we apply Theorem \ref{main1}. The proof of Theorem \ref{thmUniform} is thus
complete upon verification of Assumption~\ref{multass} for the present choice of
$m$. This is achieved in the following proposition.

\begin{proposition}\label{multver} Assume that $K$ satisfies (\ref{propKernel}) for
$p \ge 2$ and that $\nu$ satisfies $\int_{\R}|x|^3\nu(\d x)<\infty$.
Let $h=h_n\to0$ and $\Delta=\Delta_n\to0$ as $n\to\infty$ with $h^3=o(\Delta)$, $h^{-1} =
O(\Delta^{-1/2})$.
Then
$m_{n,\Delta}(u):=\mathcal F K_h(u)/\phi_\Delta(u)$, $u \in \R$, satisfies Assumption~\ref{multass}.
\end{proposition}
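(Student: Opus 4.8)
The plan is to verify that $m=m_{n,\Delta}=\F K_h/\phi_\Delta$ falls under case~(b) of Assumption~\ref{multass} and then to check the quantitative ``moreover'' requirements by a direct Leibniz-rule computation, working throughout on $\supp m$.

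\emph{Case (b) structure.} Since $\supp\F K\subseteq[-1,1]$ and $\F K_h(u)=\F K(hu)$, the multiplier $m$ is supported in $[-1/h,1/h]\subseteq[-C\Delta^{-1/2},C\Delta^{-1/2}]$ by the hypothesis $h^{-1}=O(\Delta^{-1/2})$. As $K$ is real-valued and $\PP_\Delta$ a real probability measure, $\F K_h$ and $\phi_\Delta$ both satisfy $f(-u)=\overline{f(u)}$, hence so does $m$, and therefore $\F^{-1}[m]$ is real-valued; being continuous with compact support, $m$ is integrable, so $\F^{-1}[m]$ is moreover a bounded continuous function. (Note that $\F^{-1}[m']$ is purely \emph{imaginary}, which is why case~(b) rather than (a) is the natural one here.) The only nonobvious point is that $\phi_\Delta$ is bounded away from $0$ on $\supp m$: for $|u|\le1/h$ we have $\Delta u^2\le C^2$, and since
\[
\Re\psi(u)=-\tfrac{\sigma^2}{2}u^2+\int_\R(\cos(ux)-1)\,\nu(\d x)\ge-\tfrac{u^2}{2}\Big(\sigma^2+\int_\R x^2\nu(\d x)\Big),
\]
it follows that $|\phi_\Delta(u)|=e^{\Delta\Re\psi(u)}$ is bounded below by a constant depending only on the L\'evy triplet, uniformly in $|u|\le1/h$ and in $\Delta$; in particular $\|\phi_\Delta^{-1}\|_{\ell^\infty[-1/h,1/h]}\lesssim1$. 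Consequently $m\in C^2(\R)$ (by \eqref{propKernel}, $\F K\in C^{p+1}$ and vanishes together with its first $p+1\ge3$ derivatives at $\pm1$, so $\F K_h$ glues smoothly to $0$ outside $[-1/h,1/h]$, and $\phi_\Delta\in C^2$ is non-vanishing on $\supp m$ by \eqref{eqSecMom}), and $m(u)\to1$ pointwise because $\F K(hu)\to\F K(0)=1$ and $\phi_\Delta(u)=e^{\Delta\psi(u)}\to1$.

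\emph{Sup-norm bounds.} Expanding $m^{(k)}=\sum_{j\le k}\binom{k}{j}(\F K_h)^{(j)}(\phi_\Delta^{-1})^{(k-j)}$, we use that $(\F K_h)^{(j)}(u)=h^j(\F K)^{(j)}(hu)$ is bounded by $\lesssim h^j$ and supported in $[-1/h,1/h]$, and that $\phi_\Delta'=\Delta\psi'\phi_\Delta$ yields $(\phi_\Delta^{-1})'=-\Delta\psi'\phi_\Delta^{-1}$ and $(\phi_\Delta^{-1})''=(-\Delta\psi''+\Delta^2(\psi')^2)\phi_\Delta^{-1}$. With $|\psi'(u)|\lesssim1+|u|$, $|\psi''(u)|\lesssim1$ (both from \eqref{eqSecMom}) and $\|\phi_\Delta^{-1}\|_{\ell^\infty[-1/h,1/h]}\lesssim1$, and using $1/h\lesssim\Delta^{-1/2}$, we get on $\supp m$
\[
|(\phi_\Delta^{-1})'(u)|\lesssim\Delta(1+|u|)\lesssim\Delta/h,\qquad|(\phi_\Delta^{-1})''(u)|\lesssim\Delta+\Delta^2/h^2\lesssim\Delta.
\]
Since $h^{-1}=O(\Delta^{-1/2})$ means $\Delta^{1/2}\lesssim h\to0$, a term-by-term check gives $\|m^{(k)}\|_\infty\lesssim h^k$ for $k=0,1,2$; as $(1+|u|)^k\lesssim h^{-k}$ on $\supp m\subseteq[-1/h,1/h]$, this yields $\|(1+|u|)^km^{(k)}\|_\infty\lesssim1$ uniformly in $n,\Delta$.

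\emph{$L^2$-bounds and conclusion.} From $|\supp m|\le2/h$ and the sup-bounds, $\|m^{(k)}\|_{L^2}^2\le(2/h)\|m^{(k)}\|_\infty^2\lesssim h^{2k-1}$, so $\|m'\|_{L^2}\lesssim h^{1/2}\to0$ and $\Delta^{-1/2}\|m''\|_{L^2}\lesssim\Delta^{-1/2}h^{3/2}=(h^3/\Delta)^{1/2}\to0$ by $h^3=o(\Delta)$; this verifies the remaining conditions of Assumption~\ref{multass}. There is no serious obstacle: the computation is routine once the a priori lower bound on $|\phi_\Delta|$ over $[-1/h,1/h]$ is in hand, which is immediate from $\Delta u^2\le C^2$ there. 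The roles of the two bandwidth restrictions are transparent: $h^{-1}=O(\Delta^{-1/2})$, i.e.\ $\Delta^{1/2}\lesssim h$, neutralises the $\phi_\Delta^{-1}$ factor in the sup-norm estimates, while $h^3=o(\Delta)$ is exactly what is needed for the weighted $L^2$ bound on $m''$.
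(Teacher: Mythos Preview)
Your proof is correct and follows essentially the same route as the paper's: verifying case~(b) of Assumption~\ref{multass} via the compact support of $\F K$ and conjugate symmetry, establishing the uniform lower bound on $|\phi_\Delta|$ over $[-1/h,1/h]$ from $\Delta u^2\lesssim1$, and then obtaining $\|m^{(k)}\|_\infty\lesssim h^k$ (you via the Leibniz rule, the paper by writing out $m'$ and $m''$ directly) together with the $L^2$ bounds via the support size. Your argument is slightly more explicit about the global $C^2$-regularity of $m$ at the endpoints $\pm1/h$, which the paper leaves implicit.
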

\begin{proof}
We have $m(-u)=\overline{m(u)}$ so that $\F^{-1}m$ is real-valued.
By the compact support of $\mathcal F K$ and the assumption on $h^{-1}$ the support
assumption on $m$ is satisfied. Since $\phi_{\Delta} = e^{\Delta \psi}$, we have
$m_{\Delta, n} \to 1$ pointwise as $\Delta \to 0$, $h\to0$. Moreover, by
\eqref{eqIdent} we have $|\psi''(u)|\lesssim 1$ hence for $|u| \le
C\Delta^{-1/2}$ we have $$|\phi_\Delta(u)|= |e^{\Delta \psi(u)}| \ge e^{-\Delta
cu^2} \ge c'>0$$ uniformly in $\Delta$, and thus $m \in \ell^\infty(\R)$, using
also
$\sup_h\|\mathcal F K_h\|_\infty \le \|K\|_{L^1}$. Next
\[m' = h  \frac{i\F
[xK](h\bull)}{\phi_\Delta}  + \mathcal F K_h \frac{\Delta \psi' \phi_\Delta
}{\phi^2_\Delta}\]
so that using $xK \in L^1, |\psi'(u)| \lesssim 1+|u|,
|u| \le h^{-1}=O(\Delta^{-1/2})$ and the bound for $m$ above we see
$$|m'(u)| \lesssim (h  + \sqrt{\Delta}) \lesssim h.$$
Using $|\psi''(u)|\lesssim 1$ we further obtain
\begin{align*}
 |m''(u)| &\lesssim (h^2+h\sqrt{\Delta}+\Delta)
\lesssim h^2.
\end{align*}
On the support of $m$ we have $|u|\le h^{-1}$ so that
$\|(1+|u|)^{k}m^{(k)}\|_\infty\le c$, $k\in\{0,1,2\}$, follows.
Likewise by the support of $m$ we have $\|m'\|_{L^2}\lesssim h^{1/2}\to0$
and $\Delta^{-1/2}\|m''\|_{L^2}\lesssim \Delta^{-1/2}h^{3/2}\to0$.
\end{proof}

\subsection{Convergence of finite-dimensional distributions}

We next turn to the proof of Proposition~\ref{fidi0}.

\begin{definition} \label{admis}
  A function $g$ is called \textit{admissible} if it is of bounded variation and satisfies for all $x, u \in \R$,
  \[
    |g(x)| \lesssim 1 \wedge x^{-2},\quad |\mathcal F
g(u)|\lesssim (1+|u|)^{-1} \quad\text{and}\quad u\F[xg](u)\in \ell^\infty(\R).
  \]
\end{definition}
Note that the bound on $\mathcal F g$ follows from the bounded variation of
$g$, and that $x^2 g^2(x)$ is
of bounded variation whenever $g$ is admissible.

\begin{proposition} \label{fidi1}
 Let $g$ be admissible and suppose the conditions of Proposition \ref{fidi0} are satisfied. Then
   \[
     \sqrt{n\Delta_n} \int \F^{-1}[m(-\bull)\mathcal F
g](x)\frac{x^2}{\Delta_n}(\PP_{\Delta_n,n}-\PP_{\Delta_n})(\d x)\to^{\mathcal L}
\mathcal N(0,\sigma_g^2)
   \]
   with variance $\sigma_g^2=\int_{\R} x^4g(x)^2\nu(\d x)$.
\end{proposition}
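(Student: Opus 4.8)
The plan is to recognise the quantity in question as a normalised sum of i.i.d.\ triangular--array summands and to apply the Lyapunov (hence Lindeberg--Feller) central limit theorem. Set $g_n:=\F^{-1}[m_{n,\Delta_n}(-\bull)\F g]$ and $W_{n,k}:=X_k^2 g_n(X_k)$; by the definition of $\PP_{\Delta_n,n}$ the expression in the statement equals $(n\Delta_n)^{-1/2}\sum_{k=1}^n(W_{n,k}-\E W_{n,k})$, where for each fixed $n$ the $W_{n,k}$ are i.i.d.\ copies of $X_1^2 g_n(X_1)$. It then suffices to verify (i) $\Delta_n^{-1}\Var(W_{n,1})\to\sigma_g^2$ and (ii) a Lyapunov condition $n^{-\delta/2}\Delta_n^{-1-\delta/2}\E|W_{n,1}-\E W_{n,1}|^{2+\delta}\to0$ for some $0<\delta\le\eps/2$, $\eps$ as in Assumption~\ref{assProc}(a). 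A preliminary ingredient used throughout is the uniform bound $\sup_n\|g_n\|_\infty<\infty$. In case~(a) of Assumption~\ref{multass} this is immediate, since $g_n=g\ast\F^{-1}[m_{n,\Delta_n}(-\bull)]$ is the convolution of the bounded function $g$ with a signed measure of uniformly bounded total variation. In case~(b), for the truncating multiplier $m_{n,\Delta_n}=\F K_{h_n}/\phi_{\Delta_n}$, $h_n\sim\Delta_n^{1/2}$, one compares $g_n$ with the smoothed function $K_{h_n}\ast g$ (which satisfies $\|K_{h_n}\ast g\|_\infty\le\|K\|_{L^1}\|g\|_\infty$): the difference $g_n-K_{h_n}\ast g$ is $\F^{-1}$ of a function supported in $\{|u|\lesssim\Delta_n^{-1/2}\}$ and bounded there by $|1-\phi_{\Delta_n}(u)|\,|\F g(u)|\lesssim\Delta_n(1+|u|)$ --- using $|1-\phi_{\Delta_n}(u)|\lesssim\Delta_n|\psi(u)|\lesssim\Delta_n(1+|u|)^2$ and the admissibility bound $|\F g(u)|\lesssim(1+|u|)^{-1}$ --- so that $\|g_n-K_{h_n}\ast g\|_\infty\lesssim\Delta_n\int_{|u|\lesssim\Delta_n^{-1/2}}(1+|u|)\,\d u\lesssim1$. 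The same estimate yields the $L^2$--rate $\|g_n-g\|_{L^2(\R)}\lesssim\Delta_n^{1/4}$, needed below.

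Given this, (ii) follows from $|W_{n,1}|\le\|g_n\|_\infty X_1^2$ and the standard small--time moment estimate $\E|X_1|^p=\E|L_{\Delta_n}|^p\lesssim\Delta_n$ for every $p\in[2,4+\eps]$ (valid under Assumption~\ref{assProc}(a); for compound Poisson by conditioning on the number of jumps, in general by a truncation/cumulant argument): indeed $\E|W_{n,1}|^{2+\delta}\lesssim\Delta_n$ for $\delta\le\eps/2$, whence the Lyapunov ratio is $O((n\Delta_n)^{-\delta/2})\to0$. For (i), $|\E W_{n,1}|\le\|g_n\|_\infty\E X_1^2\lesssim\Delta_n$ gives $\Delta_n^{-1}(\E W_{n,1})^2\to0$, so it remains to show $\Delta_n^{-1}\E[X_1^4 g_n(X_1)^2]\to\sigma_g^2$. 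One first replaces $g_n^2$ by $g^2$ with asymptotically negligible error: since $|g_n^2-g^2|\lesssim|g_n-g|$, split the $x$--integral at $R_n:=\Delta_n^{-1/8}$. On $[-R_n,R_n]$ the density of $x^4\PP_{\Delta_n}/\Delta_n$ is $O(R_n)$ by Assumption~\ref{assProc}(c) (which forces the density of $\PP_{\Delta_n}$ to be $\lesssim\Delta_n|x|^{-3}$), so by Cauchy--Schwarz the central part is $\lesssim R_n^{3/2}\|g_n-g\|_{L^2}\lesssim R_n^{3/2}\Delta_n^{1/4}\to0$, while the tail is $\lesssim\|g_n\|_\infty^2\,R_n^{-\eps}\,\Delta_n^{-1}\E|X_1|^{4+\eps}\lesssim R_n^{-\eps}\to0$. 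Finally,
\[
\Delta_n^{-1}\E\big[X_1^4 g(X_1)^2\big]=\int x^2g(x)^2\,\frac{x^2\PP_{\Delta_n}(\d x)}{\Delta_n}\ \longrightarrow\ \int x^2g(x)^2\,\big(\sigma^2\delta_0+x^2\nu\big)(\d x)=\int x^4g(x)^2\,\nu(\d x)=\sigma_g^2
\]
by Lemma~\ref{weakcon}: the test function $x^2g^2$ is bounded (admissibility of $g$), its set of discontinuities is $\nu$--null (since $\nu$ has a Lebesgue density and $g$ is of bounded variation) and omits the origin (where the factor $x^2$ annihilates the atom of the limiting measure), and the total masses $\Delta_n^{-1}\E X_1^2$ converge to $\sigma^2+\int x^2\nu$; hence the portmanteau theorem applies.

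The step requiring most care is (i), and within it the commutation of the approximation of $g_n$ by $g$ with the weak convergence of $x^2\PP_{\Delta_n}/\Delta_n$: the natural test function $x^4g^2$ is \emph{unbounded}, so Lemma~\ref{weakcon} is not directly available, and the cut--off radius $R_n$ has to be chosen to balance the $L^2$--rate of $g_n-g$ against the uniform integrability of the fourth moments of $x^2\PP_{\Delta_n}/\Delta_n$ (supplied by $\sup_{\Delta\le1}\Delta^{-1}\E|X_1|^{4+\eps}<\infty$, itself a consequence of Assumption~\ref{assProc}(a)). The uniform bound $\sup_n\|g_n\|_\infty<\infty$ in case~(b) is the other point where a short Fourier--analytic argument is required, but as indicated it reduces to the elementary estimate $|1-\phi_{\Delta_n}(u)|\lesssim\Delta_n(1+|u|)^2$ on the band $\{|u|\lesssim\Delta_n^{-1/2}\}$.
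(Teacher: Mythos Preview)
Your overall strategy---recognise the expression as a triangular-array sum of i.i.d.\ summands and apply Lyapunov's CLT---is exactly the paper's. The gap is that your key preliminary input $\sup_n\|g_n\|_\infty<\infty$ is established only in case~(a) and for the \emph{specific} multiplier $m=\F K_{h}/\phi_{\Delta}$ in case~(b), not for general $m$ satisfying Assumption~\ref{multass}(b). For a generic compactly supported multiplier the crude bound
\[
\|g_n\|_\infty\le(2\pi)^{-1}\|m(-\cdot)\F g\|_{L^1}\lesssim\int_{|u|\le C\Delta^{-1/2}}(1+|u|)^{-1}\,\d u\sim\log(1/\Delta)
\]
is all one has from admissibility of $g$, and the paper itself uses precisely this logarithmic bound when estimating $\E[Y_{n,k}]$ in its Step~1. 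Feeding $\|g_n\|_\infty\lesssim\log(1/\Delta)$ into your Lyapunov estimate yields a ratio of order $(\log(1/\Delta))^{2+\delta}(n\Delta)^{-\delta/2}$, which need not vanish under the sole hypothesis $n\Delta\to\infty$ of Proposition~\ref{fidi0}. Your variance step likewise relies on the quantitative rate $\|g_n-g\|_{L^2}\lesssim\Delta^{1/4}$ and on $|g_n^2-g^2|\lesssim|g_n-g|$, both of which use the specific structure of $m=\F K_h/\phi_\Delta$ (the latter via the uniform $\sup$-bound).

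The paper circumvents both difficulties by exploiting Assumption~\ref{assProc}(c) more fully. For the variance it invokes Lemma~\ref{lemMultCon}, which shows $\int(x^2g_n-x^2g)^2\,\Delta^{-1}\PP_\Delta(\d x)\to 0$ directly from $\|x^3\PP_\Delta\|_\infty\lesssim\Delta$ and the multiplier bounds on $m,m'$, with no rate or $\sup$-bound on $g_n$ needed. For the Lyapunov moment it does \emph{not} bound $|W_{n,1}|\le\|g_n\|_\infty X_1^2$, but instead writes $|x|^{4+2\eps}\lesssim|1+ix|^{2+\eps}|x|^3$, uses $\|x^3\PP_\Delta\|_\infty\lesssim\Delta$ to pass to a Lebesgue integral, and applies the Hausdorff--Young inequality to obtain $\E|Y_{n,k}|^{2+\eps}\lesssim\Delta^{-\eps/2}$; this is where the admissibility condition $u\F[xg](u)\in\ell^\infty(\R)$ enters, which your argument never invokes. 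Your proof is correct for the two multipliers actually used in the paper's applications ($m=1$ and $m=\F K_h/\phi_\Delta$), but does not establish the proposition in the generality stated.
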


The functions $g_t=\rho\1_{(-\infty,t]}$ are uniformly bounded in bounded
variation and are admissible with constants
independent of $t\in\R$.
The convergence of the finite dimensional distributions in Proposition
\ref{fidi0} hence follows from the Cram\'er-Wold device since linear combinations
of the functions $g_{t_1},\dots,g_{t_k}$ for $t_1,\dots,t_k\in\R$
are admissible.

For the proof of Proposition~\ref{fidi1} we will use the following lemma, whose
assumptions are in particular fulfilled for $m_{n,\Delta}$ satisfying
Assumption~\ref{multass} and for classes of functions with uniform constants in
the admissibility definition.
\begin{lemma}\label{lemMultCon}
Let $\|x^3\PP_\Delta\|\lesssim\Delta$. For $\Delta\to0$ as $n\to\infty$ let $\|m_{n,\Delta}\|_\infty$
and $\|m_{n,\Delta}'\|_\infty$ be uniformly bounded and $m_{n,\Delta}\to1$
pointwise.
If $\G$ is a class of functions such that for all $u\in\R$
\[\sup_{g\in\G}|\mathcal{F}g(u)|\lesssim (1+|u|)^{-1}, \quad
\sup_{g\in\G}\|xg(x)\|_{L^2}\lesssim 1,\]
then
  \begin{align*}
\lim_{n\to\infty}\sup_{g\in\G}\int_{\R}\Big(x^2\F^{-1}[m_{n,\Delta}
(-u)\mathcal F
g(u)](x)-x^2g(x)\Big)^2\frac{\PP_\Delta}{\Delta}(\d x)=0.
  \end{align*}
\end{lemma}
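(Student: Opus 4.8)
The plan is to bound the $L^2(\PP_\Delta/\Delta)$-distance between the multiplier-smoothed function $x^2\F^{-1}[m(-\cdot)\F g]$ and the target $x^2 g$ by a quantity that is uniform over $\G$ and vanishes as $n\to\infty$. The first step is to rewrite the difference on the Fourier side: since $x^2 g(x) = \F^{-1}[-(\F g)''](x)$ (valid because $g$ and hence $xg, x^2g$ are integrable under the admissibility/regular variation assumptions), and similarly $x^2\F^{-1}[m(-u)\F g(u)](x) = \F^{-1}\big[-(m(-\cdot)\F g)''\big](x)$, the integrand is the inverse Fourier transform of $(m(-u)\F g(u))'' - (\F g(u))''$. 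Expanding the product derivative, this difference equals
\[
(m(-u)-1)(\F g)''(u) - 2 m'(-u)(\F g)'(u) + m''(-u)\F g(u).
\]
So the task reduces to controlling the $L^2(\PP_\Delta/\Delta)$-norm of the inverse Fourier transform of each of these three terms.

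The second step is to pass from the measure $\PP_\Delta/\Delta$ to Lebesgue measure via a Fourier integral operator inequality — this is exactly the mechanism announced in the introduction of Theorem \ref{main1} ("sharp bounds on certain covering numbers based on a suitable Fourier integral operator inequality for $\F^{-1}[m]$ in $L^2(\PP_\Delta)$-norms"). Concretely, for a function $F$ with $\F F$ supported in a ball and controlled, one has $\int |F(x)|^2 \PP_\Delta(\d x)/\Delta \lesssim \|F\|_{L^2}^2$ (or a Sobolev-type variant), because $\PP_\Delta/\Delta$ behaves like an approximate identity plus Lebesgue-mass of order $1$; the condition $\|x^3\PP_\Delta\|_\infty\lesssim\Delta$ is what guarantees the needed control. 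Granting such an inequality, the problem collapses to showing that each of the three functions $(m(-\cdot)-1)(\F g)''$, $m'(-\cdot)(\F g)'$, $m''(-\cdot)\F g$ has $L^2(\R)$-norm tending to $0$ uniformly over $g\in\G$.

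The third step is these three $L^2$-estimates. For the last two terms I would use the uniform bound $\sup_g\|\F g\|_\infty$, $\sup_g\|xg\|_{L^2} = \sup_g\|(\F g)'\|_{L^2}\lesssim 1$ (Plancherel), together with $\|m'\|_\infty$ bounded and $\|m'\|_{L^2}\to 0$, $\|m''\|_{L^2}\to0$ (or the $\Delta^{-1/2}$-weighted version) from Assumption \ref{multass} — e.g. $\|m'(-\cdot)(\F g)'\|_{L^2}\le \|m'\|_\infty\|(\F g)'\|_{L^2}$ need not go to zero by itself, so instead one splits $\R$ into $\{|u|\le R\}$ and $\{|u|>R\}$: on the bounded part use $\|m'\|_{L^2(|u|\le R)}\to 0$ and boundedness of $(\F g)'$ suitably, on the tail use smallness of $\|(\F g)'\|_{L^2(|u|>R)}$ uniformly (a tightness/uniform-integrability consequence of $\sup_g\|x g\|_{L^2}\lesssim 1$ combined with $|\F g(u)|\lesssim(1+|u|)^{-1}$ controlling $(\F g)''$). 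For the first term $(m(-\cdot)-1)(\F g)''$ one uses $m\to 1$ pointwise and $\|m-1\|_\infty\lesssim 1$ (dominated convergence on compacts after an $L^2$-tail truncation using $\|(\F g)''\|_{L^2(|u|>R)}$ small uniformly, which follows from the bounded-variation / admissibility structure of the $x^2 g^2$ functions noted after Definition \ref{admis}). The main obstacle is the first step's rigorous justification of differentiating under the Fourier transform and the precise form of the Fourier integral operator inequality transferring $L^2(\R)$ control to $L^2(\PP_\Delta/\Delta)$ control with constants uniform in $\Delta$; once that inequality is in hand (it is the same tool underpinning Theorem \ref{main1}), the remaining estimates are routine Plancherel-plus-truncation arguments.
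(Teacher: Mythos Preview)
Your overall shape is right---reduce to Fourier-side quantities and then use Plancherel---but the two load-bearing steps both fail under the lemma's hypotheses, and the paper's argument is organised differently precisely to avoid these failures.

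First, step 2 does not hold. You want an inequality of the form $\int_{\R}|F(x)|^2\,\PP_\Delta(\d x)/\Delta\lesssim\|F\|_{L^2}^2$, but $\PP_\Delta/\Delta$ has total mass $1/\Delta$ and, in the presence of a Gaussian component, density of order $\Delta^{-3/2}$ near the origin; no such bound is available, band-limited or not. The assumption $\|x^3\PP_\Delta\|_\infty\lesssim\Delta$ only helps if the integrand carries an explicit factor of $x^3$. The paper therefore does \emph{not} push both factors of $x$ into the Fourier transform. It writes the integrand as $F(x)\cdot F(x)\cdot x^4$ with $F=\F^{-1}[(m(-\cdot)-1)\F g]$, moves \emph{one} $x$ into \emph{one} copy of $F$ (producing $G=\F^{-1}\big[i\F[xg](m(-\cdot)-1)-\F g\,m'(-\cdot)\big]$), and keeps $x^3$ outside. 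Then $\|x^3\PP_\Delta\|_\infty\lesssim\Delta$ turns the integral into a Lebesgue $L^1$-integral, and Cauchy--Schwarz plus Plancherel give the bound $\|(m(-\cdot)-1)\F g\|_{L^2}\cdot\|G\|_{L^2}$.

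Second, your step 3 needs $(\F g)''=-\F[x^2g]\in L^2(\R)$, which the lemma does not assume and which is false for the functions $g_t(x)=(1\wedge x^{-2})\1_{(-\infty,t]}(x)$ actually used (since $x^2g_t$ does not decay at $-\infty$). The paper's splitting requires only $\F g\in L^2$ (from $|\F g(u)|\lesssim(1+|u|)^{-1}$) and $\F[xg]\in L^2$ (from $\|xg\|_{L^2}\lesssim 1$), matching the hypotheses exactly. The first factor $\|(m(-\cdot)-1)\F g\|_{L^2}\to0$ by dominated convergence uniformly in $g$; the second factor $\|G\|_{L^2}$ is merely bounded, which suffices. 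No tail truncation or uniform-integrability argument is needed.
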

\begin{proof}
We rewrite the term with $m=m_{n,\Delta}$ as
  \begin{align}
    &\Delta^{-1}\int_{\R}\F^{-1}[\mathcal F g(u)(m(-u)-1)](x)\F^{-1}[\F
g(u)(m(-u)-1)](x)x^4\PP_\Delta(\d x)\notag\\
    =&\frac{-i}{\Delta}\int_{\R}\F^{-1}\big[\mathcal F
g(u)(m(-u)-1)\big](x)\label{eqVarRem}\\
    &\qquad\quad\times\F^{-1}\big[i\F[xg](u)(m(-u)-1)-\mathcal F
g(u)m'(-u)\big](x)x^3\PP_\Delta(\d x)\notag.
  \end{align}
  Using $\|x^3\PP_\Delta\|_\infty\lesssim \Delta$, the term \eqref{eqVarRem} can be estimated by the Cauchy-Schwarz inequality and Plancherel's identity yielding the bound
  \begin{align*}
    &\int_{\R}\Big|\F^{-1}\big[\mathcal F
g(u)(m(-u)-1)\big](x)\F^{-1}\big[i\F[xg](u)(m(-u)-1)-\F
g(u)m'(-u)\big](x)\Big|\d x\\
    \le&\frac{1}{2\pi}\big\|\mathcal F
g(u)(m(-u)-1)\big\|_{L^2}\big\|\big(i\F[xg](u)(m(-u)-1)-\F
g(u)m'(-u)\big)\big\|_{L^2}.
  \end{align*}
  The first factor converges to zero by the dominated convergence theorem because $m$ is uniformly bounded and converges pointwise to one while $|\mathcal{F}g(u)|\le C(1+|u|)^{-1}$ for all $g\in\G$. For the second factor we estimate, using that $g$ and $xg$ are uniformly bounded in $L^2(\R)$ and that $\|m\|_\infty$ and $\|m'\|_\infty$ are uniformly bounded,
  \begin{align*}
    \big\|i\F[xg](u)(m(-u)-1)-\mathcal F g(u)m'(-u)\big\|_{L^2}
    &\lesssim\|\F[xg](u)\|_{L^2}+\|\mathcal F g(u)\|_{L^2}<\infty,
  \end{align*}
  which completes the proof of the lemma.
\end{proof}

\begin{proof}[Proof of Proposition~\ref{fidi1}]
We define
\begin{equation}
 S_n - \E S_n := \frac{1}{n}\sum_{k=1}^n(
Y_{n,k}-\E[Y_{n,k}])\quad\text{with}\quad
Y_{n,k}:=\Delta^{-1/2}\F^{-1}[m(-\bull)\mathcal F g](X_k)X_k^2.
\end{equation}
We will prove the proposition for general Fourier multipliers satisfying Assumption~\ref{multass}(b), the case where $\F^{-1} m$ is a finite signed measure is similar (in fact easier) and is omitted. We will verify the conditions of Lyapunov's central limit theorem, see, e.g., \cite{Bauer1996}, Theorem 28.3 and (28.8).

  \textit{Step 1:}
  We will show that
  $\lim_{n\to\infty}\Var(Y_{n,k})=\int_{\R} x^4 g(x)^2\nu(\d x)$, noting that $Y_{n,k}$ are real valued. We estimate
  \begin{align*}
  \abs{\E[Y_{n,k}]}&=\Delta^{-1/2}\babs{\int_{\R}{\cal F}^{-1}\big[m(-u)\F
  g(u)\big](x)x^2\PP_\Delta(\d x)}\\
  &\le \Delta^{-1/2}\norm{{\cal F}^{-1}[m(-u)\F
  g(u)]}_\infty\norm{x^2\PP_\Delta}_{L^1}\\
  & \lesssim \Delta^{-1/2} \int_{-C \Delta^{-1/2}}^{C \Delta^{-1/2}}
(1+|u|)^{-1}\d u ~\E[X_1^2] \\
  &\lesssim \Delta^{-1/2}\log(\Delta^{-1})\Delta\to 0
  \end{align*}
  where we have used that $\E[X_1^2]= O(\Delta)$.
  Consequently,
$\lim_{n\to\infty}\Var(Y_{n,k})=\lim_{n\to\infty}\E[Y_{n,k}^2]$,
  which we decompose in the following way:
  \begin{align*}
    \lim_{n\to\infty}\E[Y_{n,k}^2]
    &=\lim_{n\to\infty}\Delta^{-1}\int_{\R}(\F^{-1}[m(-u)\mathcal F
g(u)](x)x^2)^2\PP_\Delta(\d x)\notag\\
    &= \lim_{n\to\infty}\Delta^{-1}\int_{\R}\Big((\F^{-1}[m(-u)\mathcal F
g(u)](x)x^2)^2-(x^2g(x))^2\Big)\PP_\Delta(\d x)\\
    &\quad+\lim_{n\to\infty}\left(\Delta^{-1}\int_{\R}(x^2g(x))^2\PP_\Delta(\d
    x)-\int_{\R}(xg(x))^2 x^2\nu(\d
x)\right)\notag\\
&\quad+\int_{\R}(xg(x))^2x^2\nu(\d x).\notag
  \end{align*}
  The last term is the claimed limit. The first limit is zero by Lemma~\ref{lemMultCon}.
  For the second limit we deduce by Lemma~\ref{weakcon}
that $(x^2 \wedge x^4) \PP_\Delta/\Delta$ converges weakly to the absolutely
continuous measure $(x^2 \wedge x^4) \nu $, and thus in particular
by the Portmanteau lemma when
integrating against the function $(x^2 \vee 1) g(x)^2$, which is of bounded
variation. This implies convergence to zero of the
second term. This shows
  $\lim_{n\to\infty}\Var(Y_{n,k})=\int x^4 g(x)^2\nu(\d x)$.

  \textit{Step 2:} We verify Lyapunov's moment condition: For some $\eps\in(0,1)$ and $S_n=\sum_{k=1}^nY_{n,k}$
  \begin{align*}
  \lim_{n\to\infty}\frac{1}{\Var(S_n)^{1+\eps/2}}\sum_{k=1}^{n}\E[|Y_{n,k}|^{
  2+\eps}]=0.
  \end{align*}
  From the previous step we know $n^{-1}\Var(S_n)=\Var(Y_{n,k})\to\sigma_g^2$
  as $n\to\infty$. Moreover, by $|x|^{4+2\eps}\lesssim|1+ix|^{2+\eps}|x|^3$,
   $\|x^3\PP_\Delta\|_\infty\lesssim\Delta$ and the Hausdorff--Young inequality
\cite[e.g., 8.30 on p.~253 in][]{folland1999}
  \begin{align*}
    \E[|Y_{n,k}|^{2+\eps}]&\lesssim
\Delta^{-1-\eps/2}\int_{\R}\big|\F^{-1}\big[m(-u)\mathcal F
g(u)\big](x)x^2\big|^{2+\eps}\PP_{\Delta}(\d x)\\
    &\lesssim\Delta^{-\eps/2}\int_{\R}\big|\F^{-1}\big[m(-u)\mathcal F
g(u)\big](x)(1+ix)\big|^{2+\eps}\d x\\
    &=\Delta^{-\eps/2}\Big\|\F^{-1}\big[m(-u)\mathcal F g(u)-m'(-u)\F
g(u)+im(-u)\F[xg](u)\big]\Big\|_{L^{2+\eps}}^{2+\eps}\\
    &\lesssim\Delta^{-\eps/2}\Big\|m(-u)\mathcal F g(u)-m'(-u)\F
g(u)+im(-u)\F[xg](u)\Big\|_{L^{(2+\eps)/(1+\eps)}}^{2+\eps}.
  \end{align*}
  By Assumption~\ref{multass}, $m$ and $m'$ are uniformly bounded, $\|\mathcal F
g(u)\|_{L^{(2+\eps)/(1+\eps)}}$ is bounded by $|{\F}g|\lesssim (1+|u|)^{-1}$
and
\begin{align*}
\quad\big\|\F[xg](u)\big\|_{L^{(2+\eps)/(1+\eps)}}
\lesssim\big\|\F[xg](u)\big\|_{L^{(2+\eps)/(1+\eps)}([-1,1])}+\big\|\F[xg]
(u)\big\|_ { L^
{ (2+\eps)/(1+\eps)}([-1,1]^c)},
\end{align*}
which are finite by $xg\in L^2(\R)$ and by $u\F[xg](u)\in \ell^\infty(\R)$,
respectively.
Consequently, $\E[|Y_{n,k}|^{
  2+\eps}]\lesssim \Delta^{-\eps/2}$, implying
  \begin{align*}
  \lim_{n\to\infty}\frac{1}{\Var(S_n)^{1+\eps/2}}\sum_{k=1}^{n}\E[|Y_{n,k}|^{
  2+\eps}]\lesssim\lim_{n\to\infty}\frac{n\Delta_n^{-\eps/2}}{n^{1+\eps/2}}
  =\lim_{n\to\infty}(n\Delta_n)^{-\eps/2}=0.
  \end{align*}

\end{proof}

\subsection{Proof of Proposition \ref{Propx1PDelta}}

\begin{proof}
For (ii) and (iii) we have $\int |x| \nu (\d x)<\infty$ and will use that the function $\psi$ in the exponent of the
L\'evy--Khintchine formula \eqref{eqLevyKhintchine} may be
written as
\[\psi(u)=-\frac{\sigma^2u^2}{2}+i\gamma_0 u+\int_{\R}(e^{iux}-1)\nu(\d
x)\quad\text{ with }\gamma_0:=\gamma-\int_{\R}x\nu(\d x).\]  For (iii) note
$x\PP_\Delta=(x\PP_\Delta^+)\ast\PP_\Delta^-+(x\PP_\Delta^-)\ast\PP_\Delta^+$
with the corresponding laws for $\nu^+,\nu^-$. It thus suffices to prove
$\norm{x\PP_\Delta^+}_\infty+ \norm{x\PP_\Delta^-}_\infty\lesssim 1$ and without
loss of generality we only consider $\PP_\Delta^+$ in the proof of case~(iii).
For~(iv) we use the same decomposition but this time the
law $\PP_\Delta^+$ corresponds to the L\'evy triplet $(0,\gamma,\nu^+)$ so that
it also incorporates the drift.
\begin{enumerate}
\item If $\sigma>0$ holds, then $\abs{\psi'(u)}\lesssim 1+\abs{u}$ implies
\[ \norm{x\PP_\Delta}_\infty \le \norm{\phi_\Delta'}_{L^1}\lesssim \int \Delta(1+\abs{u})e^{-\Delta\sigma^2u^2/2}\d u\lesssim 1.
\]

\item On the assumptions the measure $x\nu$ is finite yielding the
identity $x\PP_\Delta=\Delta (x\nu)\ast\PP_\Delta$, which implies that even
    \[ \norm{x\PP_\Delta}_\infty\le \Delta\norm{x\nu}_\infty.\]

\item Without loss of generality we suppose $\|x\nu^\pm\|_\infty=\infty$. Denote
the limit inferior in condition (iii) by $\delta>0$ and define
\[
  a_\Delta:=\inf\left\{a>0:\sup_{x>a}\Delta x\nu(x)\le\frac{4}{\delta}\right\},
\]
where $a_\Delta>0$ follows from
$\lim_{a\to0}\sup_{x>a}x\nu(x)=\|x\nu^+\|_\infty=\infty$. Since
$\|x\nu\|_{\ell^\infty(\R\setminus[-\eps,\eps])}$ is bounded for any $\eps>0$ we
deduce that $a_\Delta\downarrow0$ as $\Delta\to0$.

Let us introduce $\nu_\Delta^s:=\nu{\1}_{[0,a_\Delta]}$ and
$\nu_\Delta^c:=\nu^+-\nu_\Delta^s$. By
$\|x\nu_\Delta^c\|_\infty\le\frac{4}{\Delta\delta} $ and the argument in (ii),
applied to $\nu_\Delta^c$, the corresponding law $\PP_\Delta^c$ satisfies
$\norm{x\PP_\Delta^c}_\infty\lesssim 1$. Because of

\[x\PP_\Delta=(x\PP_\Delta^c)\ast\PP_\Delta^s+(x\PP_\Delta^s)\ast\PP_\Delta^c
    =(x\PP_\Delta^c)\ast\PP_\Delta^s+(\Delta x\nu_\Delta^s)\ast\PP_\Delta
    \]
we shall bound $\norm{\Delta x\nu_\Delta^s}_{L^1}$ and
$\norm{\PP_\Delta}_\infty$.
From the assumptions we infer $\norm{\Delta x\nu_\Delta^s}_{L^1}\lesssim
a_{\Delta}$ via
\[
  \int_0^{a_\Delta} \Delta x\nu(\d x)
  =\lim_{a\downarrow a_\Delta}\int_0^a\Delta x\nu(\d x)
  \lesssim \limsup_{a\downarrow a_\Delta}a\Delta
a\nu(a)\le\frac{4a_\Delta}{\delta}.
\]
On the other hand, by construction there is some
$a_\Delta^-\in[\frac{1}{2}a_\Delta,a_\Delta]$ such that $\Delta
a_\Delta^-\nu(a_\Delta^-)\ge4/\delta$. Together with the assumptions, and
$\|\PP_\Delta\|_\infty \le \|\phi_\Delta\|_1$, we see that for
$\eps:=a_\Delta^-$ sufficiently small, that is for $\Delta$ small, and for some
$\kappa\in(2,4)$
\begin{align*}
    a_\Delta\norm{\PP_\Delta}_\infty &\le 2a_\Delta^-\int_{-\infty}^\infty
e^{-\frac{\Delta }{\kappa} u^2\int_0^{1/\abs{u}} x^2\nu(\d x)}\d u\\
    &=2\int_{-\infty}^\infty e^{-\frac{\Delta}{\kappa}
(v/a_\Delta^-)^{2}\int_0^{a_\Delta^-/\abs{v}} x^2\nu(\d x)}\d v\\
    &\le 4+2\int_{\abs{v}>1} e^{-\frac{\delta}{\kappa}\Delta
a_\Delta^-\nu(a_\Delta^-)\log(\abs{v})}\d v\\
&\le4+4\int_{1}^\infty v^{-4/\kappa}\d v\sim 1,
\end{align*}
which together with the bound on $\norm{\Delta x\nu_\Delta^s}_{L^1}$ yields the
result.

\item By Theorem 27.7 in \cite{sato1999} $\PP_\Delta$ admits a Lebesgue density, hence by Fourier inversion $\norm{x\PP_\Delta}_\infty\le \norm{\phi_\Delta'}_{L^1}$ and by the
hypothesis on $\nu^+$,
we
estimate for some $\kappa>0$ and for some small $c>0$
\begin{align*}
\norm{x\PP_\Delta^+}_\infty &\le \int_{-\infty}^\infty \Delta
\babs{\int_0^\infty(e^{iux}-1)x\nu(\d x)+\gamma}e^{-\Delta \int_0^\infty
(1-\cos(ux))\nu(\d x)}\d u\\
&\lesssim \int_{-\infty}^\infty \Delta\Big(1+\int_0^\infty (\abs{u}x^2\wedge
x)\nu(\d x)\Big) e^{-\frac{\Delta}{\kappa} u^2\int_0^{1/\abs{u}}x^2\nu(\d x)}\d
u\\
&\le \int_{-\infty}^\infty \Delta\Big(1+\int_0^\infty (\abs{u}x^2\wedge x)\nu(\d
x)\Big) e^{-c\Delta \int_0^\infty (\tfrac{u^2}2x^2\wedge\abs{u}x)\nu(\d x)}\d u.
\end{align*}
The derivative of the exponent is given by
$-c\Delta\sgn(u)\int_0^\infty (\abs{u}x^2\wedge x)\nu(\d x)$  such that the last
line of the display is bounded by
\[ \int_{-\infty}^\infty  \Delta e^{-c\Delta \int_0^\infty (\tfrac{u^2}2 x^2\wedge\abs{u}x)\nu(\d x)}\d u +2/c.
\]
From $\abs{u}\int_0^{1/u}x^2\nu(\d x)\gtrsim 1$  we infer that the integral is
at most of order $\int \Delta e^{-\Delta \abs{u}}\d u\thicksim 1$ and the result
follows.\qedhere
\end{enumerate}
\end{proof}

\section{Proof of Theorem \ref{main1}} \label{mainproof}

We recall $g_t(x)=\rho(x)\1_{(-\infty,t]}(x)$ and hence
\begin{align*}
 \mathbb{G}_n(t)=\sqrt{n}\int_{\R}\Delta^{-1/2}x^2\F^{-1}[m(-u)\mathcal F
g_t(u)](x)(\PP_{\Delta,n}-\PP_\Delta)(\d x), \quad t\in\R.
\end{align*}
By Proposition~\ref{fidi0} and Theorem~1.5.7 in \cite{vanderVaartWellner1996} it suffices to show that there
is a semimetric $d$ such that $(\R,d)$ is totally bounded and for every
$\gamma>0$ we have
\begin{align}\label{asymptotic_equicontinuity}
 \lim_{\delta\to0}\limsup_{n\to\infty}\Pr \left(\sup_{s,t \in \R: d(s,t)\le\delta}
|\mathbb{G}_n(s)-\mathbb{G}_n(t)|>\gamma\right)=0.
\end{align}
We note that $\mathbb G_n$ equals a triangular array of empirical processes $\sqrt n (\PP_{\Delta, n}-\PP_{\Delta})$ indexed by the class
\begin{align*}
 \tilde \G_n&:=\{\tilde g_t(x):t\in\R\},\\
 \tilde g_t(x)&:=\Delta^{-1/2}x^2\F^{-1}[m(-\bull)\mathcal F g_t(\bull)](x).
\end{align*}

\subsection{Equicontinuity and a change of metric}

For $t\le0$ we decompose $\tilde g_t$ into the three terms
\begin{align}
 \tilde g_t^{(1)}(x)&:=\Delta^{-1/2}x^2\F^{-1}[m(-u)\F [(\rho(\bull)
-e^{\bull-t}\rho(t)
)\1_{(-\infty,t]}(\bull)](u)](x),\label{eqtildeg1}\\
 \tilde g_t^{(2)}(x)&:=\Delta^{-1/2}x\F^{-1}[m(-u)\F[te^{\bull-t}\rho(t)
\1_{(-\infty,t]}(\bull)](u)](x),\label{eqtildeg2}\\
\tilde g_t^{(3)}(x)&:=\tilde g_t(x)-\tilde g_t^{(1)}(x)-\tilde
g_t^{(2)}(x).\label{eqtildeg3}
\end{align}
Heuristically speaking the main difficulties arise from the fact that $\1_{(-\infty, t]}$ is nonintegrable on $\R$ and discontinuous at $t$. The above decomposition separates the jump-discontinuity from the non-integrable part, and the third term collects the remainder without discontinuity or integrability issues. We refer to the second term as the `critical term' since it is not regular
enough to be treated by the usual metric entropy techniques.

For $t>0$ we replace $e^{y-t}\rho(t)\1_{(-\infty,t]}(y)$ by
$-e^{t-y}\rho(t)\1_{(t, \infty)}(y)$, and the proof below proceeds
with only notational changes. We thus restrict to $t \in (-\infty,0]$.

By the triangle inequality it suffices to show asymptotic
equicontinuity for the empirical processes
indexed by the three terms in the above decomposition
separately with appropriate metrics $d^{(i)}$, and then
\eqref{asymptotic_equicontinuity} holds with the overall
metric $d=\max _i d^{(i)}$ equal to the maximum
of the three metrics $d^{(i)}, i =1,2, 3$. In view of the variance structure
of the limiting process $\mathbb G$ it is natural to choose the semimetrics
\[d^{(i)}(s,t) = \sqrt{\int_{\R} (g_s^{(i)}-g^{(i)}_t )^2(x) \nu(\d x) },\quad
i=1,2,3,\]
where
\begin{align}
 g_t^{(1)}(x)&:=x^2(\rho(x)-e^{x-t}\rho(t)
)\1_{(-\infty,t]}(x)\label{eqg1},\\
g_t^{(2)}(x)&:=xte^{x-t}\rho(t)
\1_{(-\infty,t]}(x)\label{eqg2},\\
g_t^{(3)}(x)&:=x(x-t)e^{x-t}\rho(t)
\1_{(-\infty,t]}(x),\label{eqg3}
\end{align}
and we note $x^2g_t=g_t^{(1)}+g_t^{(2)}+g_t^{(3)}$.
On the other hand the covariance metric compatible with the distribution
$\PP_\Delta$ of the $X_k$'s driving the empirical process is given by the
$L^2(\PP_\Delta)$-distance. In the following we will show that a
$\delta$-increment for the limiting metric $d^{(i)}$ corresponds, for $n$ large
enough, to a $\delta$-increment in the $L^2(\PP_\Delta)$-metric on the functions
$\tilde g_t^{(i)}$. Verifying asymptotic equicontinuity for the whole process
then reduces to showing total boundedness of each subclass and that, for each
$i=1, 2, 3$, and every $\gamma>0$,
\begin{align}\label{eqEquContF}
 \lim_{\delta\to0}\limsup_{n\to\infty}\Pr \left(\sup_{\|\tilde
g_s^{(i)}-\tilde g_t^{(i)}\|_{2,\PP_\Delta}\le\delta}
\left|\sqrt n\int_{\R} (\tilde g_s^{(i)}-\tilde
g_t^{(i)})(\PP_{\Delta,n}-\PP_\Delta)(\d x)\right|>\gamma\right)=0,
\end{align}
where $\|f\|_{2,P}:=(\int |f|^2 \d P)^{1/2}$.
This will permit the application of powerful tools from empirical process theory
to control the last probabilities. Before we do this, we demonstrate the
reduction
to (\ref{eqEquContF}) for all three terms in the above decomposition
separately. We note that total boundedness of the
classes $\mathcal G^{(i)} = \{g_t^{(i)}: t \in \R\}$ for the $d^{(i)}$-metric
follows from entropy computations given in the following subsections.

\medskip

Starting with $\{\tilde g_t^{(1)}: t \le0\}$,
we note that the functions
\begin{align*}
 x^{-1}g_t^{(1)}(x)&=x(\rho(x)-e^{x-t}\rho(t))\1_{(-\infty,t]}(x)\\
&=(x\rho(x)-(x-t)e^{x-t}\rho(t)-e^{x-t}t\rho(t))\1_{(-\infty,t]}(x),\quad t\le0,
\end{align*}
are uniformly bounded and
uniformly Lipschitz
continuous.
In order to compare $d^{(1)}$ to the $L^2(\PP_\Delta)$-norm on $\{\tilde
g_t^{(1)}:t\le0\}$, we
claim
\begin{align}\label{metric_comparison}
 \sup_{s,t\le0}
\left|\int(\tilde g_s^{(1)}(x)-\tilde
g_t^{(1)}(x))^2{\PP_\Delta(\d x)}-\int(g_s^{(1)}(x)-g_t^{(1)}(x))^2\nu(\d
x)\right| \to 0
\end{align}
as $n\to\infty$. Any class of
functions that is uniformly bounded and uniformly Lipschitz continuous is a
uniformity class for weak convergence using either Theorem~1 in
\cite{BillingsleyTopsoe1967}, or the well-known fact that the BL-metric
metrises weak convergence.
So, the weak convergence in Lemma~\ref{weakcon}
yields
\begin{align*}
 \sup_{s,t\le0}\left|\int(x^{-1}g_s^{(1)}(x)-x^{-1}g_t^{(1)}(x))^2x^2\Delta^{
-1 } \PP_\Delta(\d
x)-\int(x^{-1}g_s^{(1)}(x)-x^{-1}g_t^{(1)}(x))^2x^2\nu(\d x)\right|\to0
\end{align*}
as $n\to\infty$. Next using $0<\rho(x)\le C (1\wedge x^{-2})$ and the bounded
variation of $\rho$, we see that
$\G:=\{x^{-2}(g_s^{(1)}(x)-g_t^{(1)}(x)):s,t\le0\}$ satisfies the assumption of
Lemma~\ref{lemMultCon} and hence
\begin{align}
 \sup_{s,t\le0}\int\left((\tilde g_s^{(1)}(x)-\tilde
g_t^{(1)}(x))-\Delta^{-1/2}(g_s^{(1)}(x)-g_t^{(1)}(x))\right)^2{\PP_\Delta(\d
x)}\to0\label{eqConvTildef}
\end{align}
as $n\to\infty$.
We conclude that \eqref{metric_comparison} and then also the reduction to
(\ref{eqEquContF}) holds for $\{\tilde g_t^{(1)}: t \in \R\}$.

A similar reduction for $\tilde g_t^{(2)}$  defined in \eqref{eqtildeg2} is
achieved as follows. As in \eqref{metric_comparison} we claim that
\begin{align}
 &\sup_{s,t\le0}\left|\int(\tilde g_s^{(2)}-\tilde g_t^{(2)})^2\d\PP_\Delta
-\int(g_s^{(2)}-g_t^{(2)})^2\d\nu\right|\notag\\
  &\le\sup_{s,t\le0}\left|\int(\tilde g_s^{(2)}-\tilde g_t^{(2)})^2\d\PP_\Delta
-\int(g_s^{(2)}-g_t^{(2)})^2\frac{\d\PP_\Delta}{\Delta}\right|\notag\\
&+\sup_{s,t\le0}\left|\int(g_s^{(2)}-g_t^{(2)})^2\frac{\d\PP_\Delta}{\Delta
}
-\int(g_s^{(2)}-g_t^{(2)})^2\d\nu\right|\label{convergence_to_Levy_measure}
\end{align}
converges to zero as $n\to\infty$. To see this we observe that by
Lemma~\ref{weakcon} the
measures $(1\wedge
x^4)\Delta^{-1}\PP_\Delta(\d x)$ converge weakly to $(1\wedge x^4)\nu(\d x)$.
The limit is absolutely continuous with respect to Lebesgue measure and thus the
functions $g_t^{(2)}(x)/(1\wedge x^2)$, $t<0$, are $(1\wedge x^4)\nu(\d
x)$-almost everywhere continuous. Moreover, the functions
\begin{align}
\frac{g_t^{(2)}(x)}{1\wedge x^2}=xte^{x-t}\rho(t)\1_{(-\infty,t]}(x)\vee
\frac{t}{x}e^{x-t}\rho(t)\1_{(-\infty,t]}(x),\quad t<0,\label{eqg2x2}
\end{align}
are all contained in a bounded set of the space
of bounded variation functions and hence $\{(g_s^{(2)}(x)/(1\wedge
x^{2})-g_t^{(2)}(x)/(1\wedge x^{2}))^2:~s,t<0\}$ forms a uniformity class for
weak convergence towards $(1 \wedge x^4)\nu(\d x) \in \ell^\infty(\R)$ (after
renormalising the
measures
involved to have mass one and by Theorem 1 in
\cite{BillingsleyTopsoe1967}).
Consequently
\begin{align}
 \sup_{s,t\le0}\left|\int(g_s^{(2)}-g_t^{(2)})^2\frac{\d \PP_\Delta}{\Delta}
-\int(g_s^{(2)}-g_t^{(2)})^2\d\nu\right|\to0\label{eqUniformityClassQ}
\end{align}
as $n\to\infty$, where we recall that $g_0^{(2)}=0$.
To deal with the first term in
\eqref{convergence_to_Levy_measure} we define
\begin{align}
 \bar
g_t^{(2)}(x)&:=\Delta^{-1/2}x^2\F^{-1}[m(-u)\F[y^{-1}e^{y-t}t\rho(t)
\1_{(-\infty,t]}(y)]
(u)](x).\label{eqbarg2}
\end{align}
Lemma~\ref{lemMultCon} can be applied to the class
$\G:=\{y^{-2}(g_s^{(2)}(y)-g_t^{(2)}(y)):s,t\le0\}$ using that $y^{-2}g_t(y)$
is uniformly bounded in the space of bounded variation functions, as observed
after \eqref{eqg2x2}.
This yields
\begin{align}
 \sup_{s,t\le0}\int\left((\bar g_s^{(2)}-\bar g_t^{(2)})
-\Delta^{-1/2}(g_s^{(2)}-g_t^{(2)})\right)^2\d
\PP_\Delta\to0\label{eqComparisonqtobarq}
\end{align}
as $n\to\infty$. Therefore, \eqref{convergence_to_Levy_measure} follows from
\eqref{eqUniformityClassQ} and \eqref{eqComparisonqtobarq} if
\begin{equation}\label{eqBarTilde}
\| \bar g_t^{(2)} -
\tilde
g_t^{(2)}\|_{L^2(\PP_\Delta)}\to0
\end{equation}
uniformly in $t\le0$. To show this,
note that
\begin{align}
 \bar g_t^{(2)}(x)-\tilde g_t^{(2)}(x)&=i\Delta^{-1/2}x
\F^{-1}[m'(-u)\F [y^{-2}g_t^{(2)}(y)](u)](x),\label{eqDifference}
\end{align}
for $t<0$ and $\bar g_0^{(2)}(x)=\tilde g_0^{(2)}$.
We will use the following proposition, which is an adaptation of the
pseudo-differential operator inequality Proposition~10 in \cite{NicklReiss2012}.
We denote the $L^q$-Sobolev space for $q\in(0,\infty)$ and $s\in\N$
by $W^s_q(\R):=\{f\in L^q(\R):\sum_{k=0}^s\|f^{(k)}\|_{L^q}<\infty\}$ and
define $\|f\|_{L^2(P)}:=(\int |f|^2 \d P)^{1/2}$.
\begin{prop}\label{propPseudoLocality}
 Let $P$ be a probability measure with Lebesgue density $P$ and such that
$\|x^{2j+k} P\|_\infty<\infty$ for some $j,k\in\N$. Let $f\in L^2(\R)$ with
$\supp(f)\cap(-\delta,\delta)=\varnothing$ for some
$\delta>0$. Then for any $p,q\in[1,2]$, $s\in\{1,2\},$ and any
compactly supported function $\mu\in W^s_q(\R)$
\begin{align*}
  \|x^{j}(\F^{-1}[\mu]*f)\|_{L^2(P)}
  \lesssim \frac{\|x^{2j+k}
P\|_\infty^{1/2}}{\delta^{k/2}}\|\mu\|_{L^{2p/(2-p)}}\|f\|_{L^{p}}
   +\delta^j\|\mu^{(s)}\|_{L^q}\left\|\frac{f(y)}{y^s}\right\|_{L^q}
\end{align*}
provided that the right-hand side is finite. The constant does not depend on $\mu$, $\delta$ or $f$.
\end{prop}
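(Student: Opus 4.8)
The plan is to estimate the $L^2(P)$--norm by splitting the underlying integral at a radius comparable to $\delta$ around the origin; the outer region $\{|x|\ge\delta/2\}$ will produce the first term on the right, the inner region $\{|x|<\delta/2\}$ the second. Write $h:=\F^{-1}[\mu]\ast f$; since $\mu$ is compactly supported and, in one dimension, $W^s_q(\R)$ embeds into $L^1(\R)\cap L^\infty(\R)$, the function $h$ is well defined with $\F h=\mu\cdot\F f$, and the elementary Fourier identity
\[
z^s\,\F^{-1}[\mu](z)=(-i)^s\,\F^{-1}\big[\mu^{(s)}\big](z),\qquad z\neq0,
\]
holds (by $s$-fold integration by parts, legitimate because $\mu\in W^s_q$ has compact support, and Hausdorff--Young to interpret both sides as $L^{q'}$-functions, $q':=q/(q-1)$).

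For the outer region one does not use the support condition on $f$: there $|x|^{2j}P(x)=\big(|x|^{2j+k}P(x)\big)|x|^{-k}\le\|x^{2j+k}P\|_\infty(\delta/2)^{-k}$, hence
\[
\int_{|x|\ge\delta/2}|x|^{2j}|h(x)|^2P(x)\,\d x\;\lesssim\;\delta^{-k}\,\|x^{2j+k}P\|_\infty\,\|h\|_{L^2}^2 .
\]
It then suffices to bound $\|h\|_{L^2}$: by Plancherel, $\|h\|_{L^2}\lesssim\|\mu\cdot\F f\|_{L^2}$, and by H\"older with $\tfrac1a+\tfrac1b=\tfrac12$ followed by the Hausdorff--Young inequality $\|\F f\|_{L^b}\lesssim\|f\|_{L^{b'}}$ (which needs $b\ge2$, i.e.\ $b':=b/(b-1)\le2$), the choice $b'=p$ forces $a=2p/(2-p)$ and yields $\|h\|_{L^2}\lesssim\|\mu\|_{L^{2p/(2-p)}}\|f\|_{L^p}$. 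Taking square roots gives the first term; this is exactly where $p\in[1,2]$ is used.

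For the inner region the support of $f$ in $\{|y|\ge\delta\}$ is essential: for $|x|<\delta/2$ and $|y|\ge\delta$ one has $|x-y|\ge|y|-|x|\ge|y|/2$, so by the Fourier identity above
\[
|h(x)|\;=\;\Big|\int\F^{-1}\big[\mu^{(s)}\big](x-y)\,\frac{f(y)}{(x-y)^{s}}\,\d y\Big|\;\le\;2^{s}\int\big|\F^{-1}\big[\mu^{(s)}\big](x-y)\big|\,\frac{|f(y)|}{|y|^{s}}\,\d y ,
\]
and H\"older together with Hausdorff--Young give $\sup_{|x|<\delta/2}|h(x)|\lesssim\|\mu^{(s)}\|_{L^q}\,\big\|f(y)/y^{s}\big\|_{L^q}$ (here $q\in[1,2]$ enters). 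Since $P$ is a probability density, $\int_{|x|<\delta/2}|x|^{2j}|h(x)|^2P(x)\,\d x\le(\delta/2)^{2j}\sup_{|x|<\delta/2}|h(x)|^2$, and taking square roots produces the second term. Adding the two regional estimates finishes the proof.

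The genuinely delicate point --- the ``pseudo-locality'' content, in the spirit of \cite[Prop.~10]{NicklReiss2012} --- is the inner region: one must cut $x$ at a radius of order $\delta$ precisely so that the bound $|x-y|\gtrsim|y|$ becomes available, which is what allows the factor $(x-y)^{-s}$ created by the Fourier identity to be swallowed into $f(y)/y^{s}$ and the estimate to close with the single top-order quantity $\|\mu^{(s)}\|_{L^q}$ rather than with all intermediate derivatives of $\mu$. The rest is routine Hausdorff--Young bookkeeping, which is what dictates the exponent constraints and the conjugate exponent $2p/(2-p)$.
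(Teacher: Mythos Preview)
Your proof is correct and follows essentially the same approach as the paper: split the $L^2(P)$-integral at radius $\delta/2$, handle the outer part via Plancherel plus H\"older/Hausdorff--Young (yielding the $\|\mu\|_{L^{2p/(2-p)}}\|f\|_{L^p}$ term), and handle the inner part via the pseudo-differential identity $(\F^{-1}[\mu]\ast f)(x)=\big((i\bullet)^{-s}\F^{-1}[\mu^{(s)}]\big)\ast f(x)$ for $x\notin\supp(f)$ together with $|x-y|\ge|y|/2$ and H\"older/Hausdorff--Young. The paper's proof is the same in structure and in the key estimates; only the presentation differs.
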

\begin{proof}
  For $f\in L^2(\R)$ and $s=1,2$ we can show, as in \cite{NicklReiss2012}, the pseudo-differential operator identity
  \begin{align*}
  (\F^{-1}[\mu]*f)(x)=\left(\left(\frac{1}{(i\bull)^s}\F^{-1}\big[\mu^{(s)}\big]
  \right)*f\right)(x), \quad x\notin\supp(f).
  \end{align*}
  Let $\delta':=\delta/2$. We use H\"older's inequality, Plancherel's identity and the Hausdorff-Young inequality to conclude
  \begin{align*}
  &\quad\int |x|^{2j}|\F^{-1}[\mu]*f|^2P(\d x)\\
  &\le\|\F^{-1}[\mu]*f\|^2_{L^{2}} \|x^{2j}\d
P\|_{\ell^\infty([-\delta',\delta']^c
) }
+\|\F^{-1}[\mu]*f\|^2_{\ell^\infty([-\delta',\delta'])}\int_{-\delta'}^{\delta'}
|x|^{2j}P(\d x)\\
  &\lesssim\|\mu\mathcal F f\|_{L^2}^2\|x^{2j+k}
P\|_{\infty}(\delta')^{-k}
    +\|(x^{-s}\F^{-1}[\mu^{(s)}](x))*f\|^2_{ L^ { \infty } ( [ -\delta',\delta']
) }(\delta')^{2j}\\
  &\lesssim(\delta')^{-k}\|x^{2j+k}
P\|_{\infty}\|\mu\|_{L^{2p/(2-p)}}^2\|\mathcal F f\|^2_{L^{p/(p-1)}} \\
&\qquad+(\delta')^{2j}\|\F^{-1}[\mu^{(s)}]\|_{L^{q/(q-1)}}^2\sup_{x\in[
-\delta',\delta']} \left(\int_{\R}\frac{|f(y)|^q}{|x-y|^{sq}}\d y\right)^{2/q}\\
  &\lesssim (\delta')^{-k}\|x^{2j+k}
P\|_{\infty}\|\mu\|_{L^{2p/(2-p)}}^2\|f\|^2_{L^{p}}
    +(\delta')^{2j}\|\mu^{(s)}\|_{L^q}^2\|f(y)/y^s\|_{L^q}^2.
  \end{align*}
  The result follows by taking the square root.
\end{proof}
We apply
Proposition~\ref{propPseudoLocality} with $P=\PP_\Delta$,
$\mu=m'(-\bull)$,
$f(y)=g_t^{(2)}(y)/y^2$, $\delta= |t|$, $p=1$, $q=2$, $k=1$, $j=1$ and $s=1$.
Using
$\|x^3\PP_\Delta\|_\infty\lesssim\Delta$ we estimate
\eqref{eqDifference} for $t<0$ by
\begin{align*}
  &\quad\Delta^{-1}\|x\F^{-1}[m'(-u)\F
[y^{-2}g_t^{(2)}(y)](u)]\|_{L^2(\PP_\Delta)}^2\\
  &\lesssim
|t|^{-1}\|m'(-\bull)\|_{L^2}^2\|y^{-2}g_t^{(2)}(y)\|_{L^1}^2+t^2\Delta^{
-1}
\|m''(-\bull)\|_{L^2}^2\|y^{-3}g_t^{(2)}(y)\|_{L^2}^2\displaybreak[0]\\
  &\lesssim
|t|^{-1}\|m'(-\bull)\|_{L^2}^2\left(\int_{-\infty}^{t}y^{-1}e^{y-t}t\rho(t)
\d
y\right)^2+\Delta^{-1}
\|m''(-\bull)\|_{L^2}^2\|e^{y-t}\1_{(-\infty,t]}(y)\|_{L^2}^2\displaybreak[0
]\\
  &\lesssim
\|m'(-\bull)\|_{L^2}^2\left(\int_{-\infty}^{t}|y|^{-1}e^{y-t}|t|^{1/2}
\rho(t)
\d
y\right)^2+\Delta^{-1}
\|m''(-\bull)\|_{L^2}^2\displaybreak[0]\\
  &\lesssim
\|m'(-\bull)\|_{L^2}^2\left(\int_{-\infty}^{-1}e^{y+1}\d
y+|t|^{1/2}\int_{-1}^{t}|y|^{-1}\d y\1_{\{t>-1\}}\right)^2+\Delta^{-1}
\|m''(-\bull)\|_{L^2}^2\\
  &\lesssim
(1+\max_{t\in[-1,0)
} |t|\log(1/|t|)^2)\|m'(-\bull)\|_{L^2}^2+\Delta^{-1}
\|m''(-\bull)\|_{L^2}^2\\
  &\lesssim
\|m'(-\bull)\|_{L^2}^2+\Delta^{-1}
\|m''(-\bull)\|_{L^2}^2,
\end{align*}
which converges to zero uniformly for $t<0$ by Assumption~\ref{multass}.
Hence, tightness of the
empirical processes indexed by $\tilde g_t^{(2)}$ can be verified by
\eqref{eqEquContF} with $i=2$.

Finally, we discuss the remaining $\tilde g_t^{(3)}=\tilde g_t-\tilde
g_t^{(1)}-\tilde g_t^{(2)}$. We have
\[
  g_t^{(3)}(x)=x^2g_t(x)-g_t^{(1)}(x)-g_t^{(2)}(x).
\]
We combine Lemma~\ref{lemMultCon} for $\G:=\{g_s-g_t:s,t\le0\}$, with \eqref{eqConvTildef}, \eqref{eqComparisonqtobarq} and
\eqref{eqBarTilde}
and obtain
\[
  \sup_{s,t\le0}\left|\left\|\tilde g_s^{(3)}-\tilde
g_t^{(3)}\right\|_{L^2(\PP_\Delta)}-\Delta^{-1/2}\left\|g_s^{(3)}-g_t^{(3)}
\right\|_{ L^2(\PP_\Delta)}\right|\to0.
\]
Exactly as in \eqref{eqUniformityClassQ} we infer
\begin{align*}
 \sup_{s,t\le0}\left|\int(g_s^{(3)}-g_t^{(3)})^2\frac{\d \PP_\Delta}{\Delta}
-\int(g_s^{(3)}-g_t^{(3)})^2\d\nu\right|\to0
\end{align*}
and thus we obtain the counterpart to \eqref{metric_comparison}
\begin{align*}
  \sup_{s,t\le0}\left|\int(\tilde g_s^{(3)}-\tilde g_t^{(3)})^2\PP_\Delta
(\d x)-\int(g_s^{(3)}-g_t^{(3)})^2\d\nu\right|\to0.
\end{align*}

\subsection{Asymptotic equicontinuity for the `non-critical terms'}

We next turn to verifying the asymptotic equicontinuity condition
(\ref{eqEquContF}) for the terms $\tilde g_t^{(i)}, i \in \{1,3\}$. We refer to
them as non-critical since uniform tightness of these processes can be deduced
directly from existing bracketing metric entropy inequalities for the empirical
process.

We recall standard empirical process notation such as $\|G\|_{\mathfrak
F}:=\sup_{f\in\mathfrak F}|G(f)|$ and $\|f\|_{2,P}:=(\int |f|^2 \d P)^{1/2}$.
We denote by
$H(\eps,\mathfrak{F},\|\cdot\|)$ the logarithm of the covering number
$N(\eps,\mathfrak{F},\|\cdot\|)$ and by
$H_{[\,]}(\eps,\mathfrak{F},\|\cdot\|)$ the logarithm of the covering number
under bracketing $N_{[\,]}(\eps,\mathfrak{F},\|\cdot\|)$ (see
\cite{vanderVaartWellner1996} for definitions). For a
class of functions $\mathfrak F$ we define
\begin{align*}
 \mathfrak{F}_\delta':=\{f-g:f,g\in\mathfrak{F},\|f-g\|_{2,\PP_\Delta}\le\delta
\}.
\end{align*}
We define the functions $f_t(x):=x^{-2}g_t^{(1)}(x)=(\rho(x)-e^{x-t}\rho(t)
)\1_{(-\infty,t]}(x)$ and
recall $\tilde
g_t^{(1)}(x)=\Delta^{-1/2}x^2\F^{-1}[m(-u)\mathcal F f_t(u)](x)$.
In order to show the equicontinuity condition~\eqref{eqEquContF} for $\tilde
g_t^{(1)}$ we define the corresponding classes
\begin{align*}
\tilde{\mathfrak{F}}:=\{\tilde
g_t^{(1)}:t\le0\}.
\end{align*}
We suppress in the notation the implicit dependence on $n$ through $\Delta$.
The weak derivative $D\rho$ is in $\ell^\infty(\R)$ by the Lipschitz continuity of
$\rho$. Since $\rho$ is also of bounded variation we have $D\rho\in
L^1(\R)\cap \ell^\infty(\R)\subset L^2(\R)$.
The class $\{f_t:t\le0\}$
is contained in a bounded set of the Sobolev space $W_2^1(\R)$
since the $L^2(\R)$-norms of $f_t$ and $Df_t$ are bounded.
By
boundedness of $m$ we conclude that $\F^{-1}[m(-u)\F
f_t(u)](x)$, $t\le0$, are contained in bounded subset of $W_{2}^1(\R)$,
which embeds continuously into $\ell^\infty(\R)$.
As an envelope of the class $(\tilde{\mathfrak{F}})_\delta'$ we can thus take
$F(x):=c\Delta^{-1/2}x^2$ for some $c>0$.
By Lemma~19.34 in \cite{vanderVaart1998} we have
\begin{align}\label{BracketingBound}
\E\|\sqrt{n}(\PP_{\Delta,n}-\PP_\Delta)\|_{(\tilde{\mathfrak{F}})_\delta'}
\lesssim J_{[\,]}(\delta,(\tilde{\mathfrak{F}})_\delta',L^2(\PP_\Delta)) +
\sqrt{n}\PP_\Delta F\{F>\sqrt{n}a(\delta)\},
\end{align}
where $a(\delta):=\delta/\sqrt{\log
N_{[\,]}(\delta,(\tilde{\mathfrak{F}})_\delta',L^2(\PP_\Delta)}$ and
\begin{align*}
J_{[\,]}(\delta,(\tilde{\mathfrak{F}})_\delta',L^2(\PP_\Delta)):=\int_0^\delta
\sqrt{\log
N_{[\,]}(\eps,(\tilde{\mathfrak{F}})_\delta',L^2(\PP_\Delta))}\d\eps.
\end{align*}
$\Delta^{1/2}x^{-2}(\tilde{\mathfrak{F}})_\delta'$ is contained in a bounded set
of the
Besov space $B^{s}_{22}(\R)$ for $s\le1$, which does not depend on $\Delta$ or
$\delta$.
Let $\gamma>0$ be such that $\int| x |^{4+2\gamma}\nu(\d x)<\infty$. We take
$s\in(1/2,1/2+\gamma)$.
The proof of Theorem~1 in \cite{NicklPoetscher2007} with $p=2$, $q=2$ and
$\beta=0$ yields
\begin{align*}
 H(\eps,\Delta^{1/2}x^{-2}(\tilde{\mathfrak{F}})_\delta',\|\cdot\langle x
\rangle^{-\gamma}\|_\infty)\lesssim \eps^{-1/s},
\end{align*}
where $\langle
x \rangle:=(1+x^2)^{1/2}$.
The entropy can be rewritten as
$H(\eps,\Delta^{1/2}x^{-2}(\tilde{\mathfrak{F}})_\delta',\|\cdot\langle x
\rangle^{-\gamma}\|_\infty)=H(\eps,\Delta^{1/2}(\tilde{\mathfrak{F}}
)_\delta',\|\cdot x^{-2}\langle x
\rangle^{-\gamma}\|_\infty)$.
A ball in the $\|\cdot x^{-2}\langle x
\rangle^{-\gamma}\|_\infty$-norm with centre $f$ and radius $\eps$ is a
bracket
\begin{align*}
 [f-\eps x^2 \langle x \rangle^{\gamma},f+\eps x^2\langle x \rangle^{\gamma}],
\end{align*}
whose $L^2(\PP_\Delta)$-size is given by $\|2\eps x^2\langle x
\rangle^{\gamma}\|_{2,\PP_\Delta}$. Consequently we have
\begin{align*}
 H_{[\,]}(\eps\|2x^2\langle x
\rangle^{\gamma}\|_{2,\PP_\Delta},\Delta^{1/2}(\tilde{\mathfrak{F}})_\delta',
L^2(\PP_\Delta))\le
H(\eps,\Delta^{1/2}(\tilde{\mathfrak{F}})_\delta',\|\cdot x^{-2}\langle x
\rangle^{-\gamma}\|_\infty)\lesssim \eps^{-1/s}.
\end{align*}
By Theorem~1.1 in \cite{Figueroa-Lopez2008} (see also \cite{FLH09}) we have
$\Delta^{-1/2}2\eps\|x^2\langle
x
\rangle^{\gamma}\|_{2,\PP_\Delta}\to2\eps\|x^2\langle x
\rangle^{\gamma}\|_{2,\nu}$ as $n\to\infty$. We obtain by a rescaling that
\begin{align*}
 H_{[\,]}(\eps,(\tilde{\mathfrak{F}})_\delta',
L^2(\PP_\Delta))
\lesssim \eps^{-1/s}.
\end{align*}
Taking $s>1/2$ we conclude that the entropy integral
$J_{[\,]}(\delta,(\tilde{\mathfrak{F}})_\delta',L^2(\PP_\Delta))$ is finite and
tends to zero as $\delta\to0$.
To show that the left hand side of \eqref{BracketingBound} tends to zero, we
first ensure that the entropy integral is small by choosing $\delta>0$. Upon
fixing $\delta$ and thus for fixed $a(\delta)$ bounded away from zero uniformly
in $\Delta$, we choose $n$ large enough
such that the second term is small.
We recall that we have taken the
envelopes to be $F(x)=c\Delta^{-1/2}x^2$.
We bound
\begin{align*}
 \sqrt{n}\PP_\Delta F\{F>\sqrt{n}a(\delta)\}
&\lesssim
\sqrt{n}\Delta^{-1/2}\int
x^2\1_{\{x^2>\sqrt{n\Delta}\,a(\delta)/c\}}\PP_\Delta(\d x)\\
&\lesssim \Delta^{-1}\int
x^4\1_{\{x^2>\sqrt{n\Delta}\,a(\delta)/c\}}\PP_\Delta(\d x),
\end{align*}
where we multiplied by $cx^2/(\sqrt{n\Delta}\,a(\delta))>1$.
For $M$ large enough
$\int x^4 \1_{\{x^2>M\}}\nu (\d x)$ is small.
Since $\Delta^{-1}\int x^4 \1_{\{x^2>M\}}\PP_\Delta(\d x)\to\int x^4
\1_{\{x^2>M\}}\nu(\d x)$ by Theorem~1.1 in \cite{Figueroa-Lopez2008},
$n\Delta\to\infty$ as $n\to\infty$ and $a(\delta)$ is bounded away from zero, we
have that
$\Delta^{-1}\int x^4\1_{\{x^2>\sqrt{n\Delta}\,a(\delta)/c\}}\PP_\Delta(\d x)$
is small for $n$ large enough.
So indeed the left hand of \eqref{BracketingBound} tends to zero as
$\delta\to0$ and $n\to\infty$ and we have shown tightness of the empirical
process indexed by $\{\tilde g^{(1)}_t:t\le0\}$.

Let us now consider the terms associated to
\begin{align*}
 \tilde g_t^{(3)}(x)&=\tilde g_t(x)-\tilde g_t^{(1)}(x)-\tilde
g_t^{(2)}(x)\\
&=\Delta^{-1/2}x^2\F^{-1}[m(-u)\F [e^{y-t}\rho(t)
\1_{(-\infty,t]}(y)](u)](x)\\
&\quad-\Delta^{-1/2}x\F^{-1}[m(-u)\F[te^{y-t}
\rho(t)\1_{(-\infty,t]}(y)](u)](x)\\
&=i\Delta^{-1/2}x\F^{-1}[m'(-u)\F [e^{y-t}\rho(t)\1_{(-\infty,t]}(y)](u)](x)\\
&\quad+\Delta^{-1/2}x\F^{-1}[m(-u)\F[(y-t)e^{y-t}
\rho(t)\1_{(-\infty,t]}(y)](u)](x).
\end{align*}
The functions $(y-t)e^{y-t}
\rho(t)\1_{(-\infty,t]}(y)$ are uniformly for all $t\le0$ bounded in $L^2(\R)$
and likewise are their weak derivatives. We conclude that they are contained in
a bounded set of $B^1_{22}(\R)$. The functions
$e^{y-t}\rho(t)\1_{(-\infty,t]}(y)$,
$t\le0$, are contained in a bounded set of
$L^2(\R)$.
Assumption \ref{multass} implies, together with the Mikhlin Fourier multiplier
theorem (e.g., Corollary 4.11 in \cite{girardiWeis2003}), that $m$ is a Fourier
multiplier on every Besov space $B^s_{pq}(\R)$, $s\in\R$, $p,q\in[1,\infty]$,
and, moreover, that
$m'$ is a Fourier multiplier mapping $B^s_{pq}(\R)$ into
$B^{s+1}_{pq}(\R)$.
We see that $\Delta^{1/2}x^{-1}\tilde g^{(3)}_t(x)$, $t\le0$, are contained in a
bounded set of $B^1_{22}(\R)$. We define the class $\tilde{\mathcal
G}:=\{\tilde g^{(3)}_t:t\le0\}$.

As an envelope of the class $(\mathcal {\tilde G})_{\delta}'$ we
can take $G(x):=c\Delta^{-1/2}x$ for some constant $c>0$.
Lemma~19.34 in \cite{vanderVaart1998} yields
\begin{align}\label{BracketingBoundH}
\E\|\sqrt{n}(\PP_{\Delta,n}-\PP_\Delta)\|_{(\tilde{\mathcal{G}})_\delta'}
\lesssim J_{[\,]}(\delta,(\tilde{\mathcal{G}})_\delta',L^2(\PP_\Delta)) +
\sqrt{n}\PP_\Delta G\{G>\sqrt{n}a(\delta)\}.
\end{align}
Again by the proof of
Theorem~1 in \cite{NicklPoetscher2007} with $s=1$, $p=2$, $q=2$,
$\beta=0$ and $\gamma=1$ we have
\begin{align*}
 H(\eps,\Delta^{1/2}x^{-1}(\tilde{\mathcal G})_{\delta}',\|\cdot
\langle x\rangle^{-1}\|_\infty)\lesssim \eps^{-1}.
\end{align*}
The entropy can be rewritten as $H(\eps,\Delta^{1/2}(\tilde{\mathcal
G})_{\delta}',\|\cdot
x^{-1}\langle x\rangle^{-1}\|_\infty)$.
A corresponding $\eps$ ball is in the $L^2(\PP_\Delta)$-norm of size
$2\eps\|x\langle x \rangle\|_{2,\PP_\Delta}$.
By Theorem~1.1 in
\cite{Figueroa-Lopez2008} we have
$\Delta^{-1/2}\|x\langle x\rangle\|_{2,\PP_\Delta}\to\|x\langle x
\rangle\|_{2,\nu}+\sigma^2$ as $n\to\infty$.
Arguing as for $\tilde{\mathfrak{F}}$ we obtain
\begin{align*}
 H_{[\,]}(\eps,(\tilde{\mathcal G})_\delta',L^2(\PP_\Delta))\lesssim
\eps^{-1}.
\end{align*}
The entropy integral in \eqref{BracketingBoundH} is finite and converges to
zero
as $\delta\to0$. The second term $\sqrt{n}\PP_\Delta G\{G>\sqrt{n}a(\delta)\}$
can be treated exactly as the second term in \eqref{BracketingBound} with
$x^2$ replaced by $x$.
So the
$\lim_{\delta\to0}\limsup_{n\to\infty}$ of \eqref{BracketingBoundH} is zero and
thus
\eqref{eqEquContF} follows for the functions $\tilde g^{(3)}_t$.

\subsection{Asymptotic equicontinuity of the `critical term'}

It remains to show asymptotic equicontinuity of the empirical process indexed by
the class
\begin{align*}
 Q_n&:=\{\tilde g_t^{(2)}:t\le0\},
\end{align*}
where we recall from \eqref{eqtildeg2} that
\begin{equation}
  \tilde
g_t^{(2)}(x)=\Delta^{-1/2}x(\F^{-1}[m(-u)]\ast q_t)(x),~~
  q_t(y):=t\rho(t)e^{y-t}\1_{(-\infty,t]}(y).\label{eqQ}
\end{equation}
We refer to this term as `critical': the functions $q_t$ contain a
step-discontinuity at $t$ and controlling its interaction with the operator
$\F^{-1}[m(-\cdot)]$ needs some more elaborate techniques than in the previous
section.

We will rely on the following auxiliary result, which is a modification of
Theorem 3 in \cite{GineNickl2008}, which in itself goes back to fundamental
ideas in \cite{GineZinn1984}. It is designed to allow for
maximally growing envelopes of the empirical process, which is crucial in our
setting to allow for minimal conditions on $\Delta$. Note that indeed
Condition~\eqref{envelopes} only requires $M_n/n^{1/2}\to0$ instead of the more
stringent condition $M_n/n^{1/4} \to 0$ which was required in Theorem~3 in
\cite{GineNickl2008}.

\begin{proposition}
For every $n \in \N,$ let $X_{n,j}, j=1, \dots, n,$ be i.i.d.~from law $P_n$ on a
measurable space $(S, \mathcal B)$ and let $\eps_j$, $j=1,\dots,n,$ be i.i.d.
Rademacher random variables independent of the $X_{n,j}$'s, all defined on a
common probability space $(\Omega, \mathcal A, \Pr)$. For any sequence
$(\mathcal Q_n)_{n\ge 1}$ of classes of measurable functions $q: S \to \R$ and
\begin{equation*}
 (\mathcal Q_n)'_{r}:=\{q-q':q,q'\in\mathcal Q_n,
\|q-q'\|_{2,P_n}\le r_n\}, n \in \N,
\end{equation*}
suppose the following conditions are satisfied for some sequence $r_n\to0$ as
$n\to\infty$
\begin{enumerate}[(a)]
 \item\label{envelopes} $\sup_{q\in\mathcal Q_n}\|q\|_\infty\le M_n$ for a sequence $M_n$ such that $n r_n^{2}{M_n}^{-2}\to\infty$.
 \item\label{r_increments} \begin{equation*}
\left\|\frac{1}{\sqrt{n}}\sum_{j=1}^n\eps_j q(X_{n,j})\right\|_{(\mathcal
Q_n)'_{r}} = o_P(1)
       \end{equation*}
as $n\to\infty$.
\item\label{entropy_bound} There exists $n_0\in\N$ such that for all $n\ge n_0$
      \begin{equation*}
       23  H(r_n,\mathcal Q_n,L^2(P_n))\le n r_n^{2}{M_n}^{-2}.
      \end{equation*}
\item\label{uniform_entropy_integrals} \begin{equation*}
       \lim_{\delta\to0}\limsup_{n\to\infty}\int_0^\delta
\sqrt{H( \eps,\mathcal Q_n, L^2(P_n))}\d\eps=0
      \end{equation*}
\end{enumerate}
Then for all $\gamma>0$
\begin{align*}
  \lim_{\delta\to0}\limsup_{n\to\infty}\Pr \left(\sup_{q,q' \in \mathcal Q_n: \|q-q'\|_{2,P_n}\le\delta}
  \left|\sqrt n\int_S (q- q')\left(\frac{1}{n}\sum_{j=1}^n \delta_{X_{n,j}}-P_n\right)(\d x)\right|>\gamma\right)=0.
\end{align*}
\end{proposition}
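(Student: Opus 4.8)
The plan is to run, essentially by hand, the symmetrisation-and-chaining scheme of \cite{GineZinn1984,GineNickl2008}, the only genuinely new feature being that the envelope $M_n$ may grow almost as fast as $\sqrt n\,r_n$. Write $\PP_n:=\frac1n\sum_{j=1}^n\delta_{X_{n,j}}$ and, in analogy with $(\mathcal Q_n)'_{r}$, set $(\mathcal Q_n)'_\delta:=\{q-q':q,q'\in\mathcal Q_n,\ \|q-q'\|_{2,P_n}\le\delta\}$. First I would symmetrise. Since every element of $(\mathcal Q_n)'_\delta$ has $P_n$-variance at most $\delta^2$, a Chebyshev estimate controls the desymmetrisation correction, and a standard symmetrisation inequality for tail probabilities (cf.\ \cite{GineZinn1984}) gives, for each fixed $\delta>0$ and all $n$ large,
\[
\Pr\!\Big(\big\|\sqrt n(\PP_n-P_n)\big\|_{(\mathcal Q_n)'_\delta}>\gamma\Big)\le 4\,\Pr\!\Big(\Big\|\tfrac1{\sqrt n}\sum_{j=1}^n\eps_j q(X_{n,j})\Big\|_{(\mathcal Q_n)'_\delta}>\tfrac\gamma4\Big)+o(1),
\]
so it suffices to prove asymptotic equicontinuity for the symmetrised process $q\mapsto\frac1{\sqrt n}\sum_{j=1}^n\eps_j q(X_{n,j})$.

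Second, I would work conditionally on $(X_{n,j})_{j\le n}$, where the symmetrised sum is a sub-Gaussian process for the \emph{random} empirical metric $\|g\|_n:=(\frac1n\sum_{j=1}^n g(X_{n,j})^2)^{1/2}$, and establish a ``good event'' on which this metric is comparable to $\|\cdot\|_{2,P_n}$. Choose nested internal nets $N_n^{(0)}\subset\cdots\subset N_n^{(K)}\subset\mathcal Q_n$ at scales $\delta 2^{-k}$ with $\delta 2^{-K}$ of the order of $r_n$, so that $\log|N_n^{(K)}|\lesssim H(r_n,\mathcal Q_n,L^2(P_n))$. Applying Bernstein's inequality to $\frac1n\sum_{j=1}^n\big(g(X_{n,j})^2-P_ng^2\big)$ for each of the $\le|N_n^{(K)}|^2$ differences $g=q-q'$ of net points — using $\|g\|_\infty\le 2M_n$ and $\Var_{P_n}(g^2)\le 4M_n^2\|g\|_{2,P_n}^2$ — and taking a union bound, the large-deviation exponent $nr_n^2M_n^{-2}$ dominates the entropy thanks to Condition~(c) ($23\,H(r_n,\mathcal Q_n,L^2(P_n))\le nr_n^2M_n^{-2}$) and diverges by Condition~(a). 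Hence on an event $E_n$ with $\Pr(E_n)\to1$ one has $\tfrac12\|g\|_{2,P_n}^2\le\|g\|_n^2\le 2\|g\|_{2,P_n}^2+r_n^2$ simultaneously for all such $g$; in particular the $\|\cdot\|_n$-entropy of $N_n^{(K)}$ at every scale $\eps\ge r_n$ is dominated, on $E_n$, by $H(c\eps,\mathcal Q_n,L^2(P_n))$ for a universal $c$.

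Third, a standard chaining decomposition between the scales $r_n$ and $\delta$: with projections $\pi_k$ onto $N_n^{(k)}$, the symmetrised increment $\frac1{\sqrt n}\sum_j\eps_j(q-q')(X_{n,j})$ splits into a \emph{residual} term $\frac1{\sqrt n}\sum_j\eps_j\big((q-\pi_K q)-(q'-\pi_K q')\big)(X_{n,j})$ — a difference of two elements of $(\mathcal Q_n)'_{r}$, hence $o_P(1)$ uniformly by Condition~(b) — plus a telescoping \emph{chain} term over the nets. Conditionally on the data, each level-$k$ link is a sub-Gaussian increment of parameter comparable on $E_n$ to its $\|\cdot\|_{2,P_n}$-length $\lesssim\delta 2^{-k}$, and there are at most $\exp(2H(\delta 2^{-k},\mathcal Q_n,L^2(P_n)))$ links at level $k$; summing the Dudley estimates over $k$ bounds the chain term, on $E_n$, by a universal multiple of $\int_0^{3\delta}\sqrt{H(\eps,\mathcal Q_n,L^2(P_n))}\,\d\eps$, which by Condition~(d) can be made $<\gamma/8$ uniformly in $n$ by shrinking $\delta$. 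Combining — $\Pr(E_n^c)=o(1)$ by (a),(c); the residual is $o_P(1)$ by (b); the chain term is $<\gamma/8$ for $\delta$ small and $n$ large — and feeding this through the symmetrisation inequality yields $\lim_{\delta\to0}\limsup_{n\to\infty}\Pr\big(\sup_{\|q-q'\|_{2,P_n}\le\delta}|\sqrt n\int(q-q')\,(\PP_n-P_n)(\d x)|>\gamma\big)=0$, which is the assertion.

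The main obstacle, and the reason off-the-shelf uniform central limit theorems do not apply, is exactly the divergence of the envelope: since $M_n$ is only required to be $o(\sqrt n\,r_n)$, the random metric $\|\cdot\|_n$ governing the conditional sub-Gaussian chaining is not automatically close to the deterministic metric $\|\cdot\|_{2,P_n}$ in which the entropy hypotheses are phrased, and this comparison must be forced by the Bernstein estimate of the second step. The numerical constant $23$ in Condition~(c) is precisely what guarantees that the exponent $nr_n^2M_n^{-2}$ still swallows $H(r_n,\mathcal Q_n,L^2(P_n))$ after accounting for the $|N_n^{(K)}|^2$ pairs and the Bernstein constants, and hence that $\Pr(E_n)\to1$; propagating this control uniformly through all chaining scales rather than at the single scale $r_n$ is the technical heart of the adaptation of \cite[Theorem~3]{GineNickl2008}.
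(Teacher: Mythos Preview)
Your proposal is correct and follows essentially the same route as the paper's proof: symmetrisation, then a single $r_n$-net (the paper uses a maximal $r_n$-separated set $\mathcal H$ rather than nested nets), Bernstein's inequality applied to $\sum_j(h^2(X_{n,j})-P_nh^2)$ over the $\le\exp(2H(r_n,\mathcal Q_n,L^2(P_n)))$ net-differences to force $d_Z(h,h')\le\sqrt2\|h-h'\|_{2,P_n}$ on an event $A_n$ with $\Pr(A_n^c)\to0$, and finally Dudley's entropy integral for the sub-Gaussian Rademacher process on $A_n$ together with Condition~(b) for the residual. The only cosmetic difference is that the paper invokes Dudley's theorem (Corollary~5.1.6 in de~la~Pe\~na--Gin\'e) as a black box on the finite net $\mathcal H$ rather than spelling out the multiscale chaining, and it only needs the one-sided comparison $\|\cdot\|_n^2<2\|\cdot\|_{2,P_n}^2$ (your two-sided bound is harmless but unnecessary).
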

\begin{proof}
Let $\gamma >0$ be given. We sometimes omit to mention $q,q' \in \mathcal Q$ to
expedite notation. By Lemma~11.2.6 in \cite{dudley1999} we have for
$\delta\in(0,\gamma/\sqrt{2})$
\begin{align*}
&{\Pr}\left(\sup_{\|q-q'\|_{2,P_n}\le\delta}
\frac{1}{\sqrt{n}}\left|\sum_{j=1}^{n}
q(X_{n,j})-q'(X_{n,j})-\E[q(X_{n,j})]+\E[q'(X_{n,j})]\right|>\gamma\right)\\
&\le4{\Pr}\left(\sup_{ \|q-q'\|_{2,P_n}\le\delta}
\frac{1}{\sqrt{n}}\left|\sum_{j=1}^{n}\eps_j(q(X_{n,j})-q'(X_{n,j}
))\right|>\frac{\gamma-\sqrt{2}\delta}{2}\right),
\end{align*}
where $\eps_j$ are Rademacher random variables independent of the $(X_{n,j})$,
all defined on a large product probability space.
Since $\gamma$ is given and
$\delta$ tends to zero, we can choose $\delta$ small enough such that
$\delta<\gamma/2$. Hence it suffices to show for all $\gamma>0$ that
\begin{align*}
 \lim_{\delta\to0}\limsup_{n\to\infty}{\Pr}\left(\sup_{ q\in(\mathcal
Q_n)'_\delta}
\frac{1}{\sqrt{n}}\left|\sum_{j=1}^{n}\eps_j
 q(X_{n,j})\right|>3\gamma\right)=0.
\end{align*}
Let $\mathcal H=\mathcal H_n$ be a maximal collection of functions
$h_1,\dots,h_m$ in $\mathcal Q_n$ such that $\|h_j-h_k\|_{2,\PP_\Delta}>r_n$ if
$j\neq k$. The closed balls with centres $h_1,\dots,h_m$ of radius $r_n$ cover
$\mathcal Q_n$.
We define \[\mathcal H'_\delta:=\{g-h:g,h\in \mathcal H,
\|g-h\|_{2,P_n}\le \delta\}.\] For $n$ large enough such that
$r_n<\delta/2$ we have
\begin{align}
 &{{\Pr}}\left(\sup_{q\in(\mathcal Q_n)'_\delta}
\frac{1}{\sqrt{n}}\left|\sum_{j=1}^{n}\eps_j
q(X_{n,j})\right|>3\gamma\right)\notag\\
&\le 2{\Pr}\left(\sup_{q\in(\mathcal Q_n)'_r}
\frac{1}{\sqrt{n}}\left|\sum_{j=1}^{n}\eps_j
 q(X_{n,j})\right|>\gamma\right)+{\Pr}\left(\max_{h\in \mathcal H_{2\delta}'}
\frac{1}{\sqrt{n}}\left|\sum_{j=1}^{n}\eps_j
h(X_{n,j})\right|>\gamma\right).\label{eqGridH}
\end{align}
By condition~\eqref{r_increments} the first term tends to zero.
To control the
second term we define the event
\begin{equation*}
 A_n:=\left\{\max_{h\in\mathcal
H_{2\delta}'\backslash\{0\}}\frac{\sum_{j=1}^n
h^2(X_{n,j})}{nP_n h^2}<2\right\},
\end{equation*}
where we used the notation $P_n f:=\int_S f \d P_n$ for functions $f:S\to\R$.
Using Markov's inequality the second term in \eqref{eqGridH} can be bounded by
\begin{align}
&{\Pr}\left(\max_{h\in \mathcal H_{2\delta}'}
\frac{1}{\sqrt{n}}\left|\sum_{j=1}^{n}\eps_j
h(X_{n,j})\right|>\gamma\right)\notag\\
&\le
{\Pr}(A_n^c)+\frac{1}{\gamma}\E_X\E_\eps\left[\left\|\frac{\sum_{j=1}^{n}\eps_j
h(X_{n,j})}{\sqrt{n}} \right\|_{\mathcal H_{2\delta}'}\1_ { A_n } \right
].\label{conditioned_on_A}
\end{align}
The number of elements in $\mathcal H'_{2\delta}$ is bounded by
\begin{equation}\label{size_H}
 \#\mathcal H'_{2\delta}\le \exp(2 H(r_n,\mathcal Q_n,L^2(P_n))).
\end{equation}
For a single $h\in\mathcal H_{2\delta}'\backslash\{0\}$ we have, using
Bernstein's inequality,
\begin{align*}
{\Pr}\left(\frac{\sum_{j=1}^n
h^2(X_{n,j})}{nP_n h^2}\ge2\right)
&={\Pr}\left(\sum_{j=1}^{n}(h^2(X_{n,j})-P_n h^2)\ge n P_n
h^2\right)\\
&\le \exp\left(-\frac{n^2 (P_n h^2)^2}{2n P_n h^4+8M_n^2
n P_n h^2/3}\right)\\
&\le\exp\left(-\frac{nP_n h^2}{11 M_n^2}\right).
\end{align*}
Combining the last bound and \eqref{size_H} we obtain
\begin{align*}
 \Pr(A_n^c)\le\exp\left(2 H(r_n,\mathcal Q_n,L^2(P_n))-\frac{n r_n^2}{11
M_n^2}\right)\to0
\end{align*}
by condition~\eqref{envelopes} and~\eqref{entropy_bound}.
It remains to show that the second term in~\eqref{conditioned_on_A} converges
to zero. Conditional on the $X_{n,j}$'s the process
\begin{equation*}
 Z(h):=\frac{1}{\sqrt{n}}\sum_{j=1}^{n}\eps_j h(X_{n,j}), \quad h\in\mathcal H,
\end{equation*}
is subgaussian. Let $h,h'\in\mathcal H$ such that $h-h'\in\mathcal
H_{2\delta}'\backslash\{0\}$. On the event $A_n$ we have
\begin{align*}
 d_Z(h,h')^2&:=\E_\eps\left[\left(\frac{1}{\sqrt{n}}\sum_{j=1}
^n\eps_j h(X_{n,j})-\frac{1}{\sqrt{n}}\sum_{j=1}
^n\eps_j
h'(X_{n,j})\right)^2\right]\\
&=\frac{1}{n}\sum_{j=1}^n(h(X_{n,j})-h'(X_{n,j}
))^2<2 P_n(h-h')^2.
\end{align*}
Especially we have on $A_n$ that $\|h-h'\|_{2,P_n}<\eps$ implies
$d_Z(h,h')<\sqrt{2}\eps$ for all $\eps>0$ with $\eps\le2\delta$ and for all
$h,h'\in\mathcal H$.
We define
$\psi_2(x):=\exp(x^2)-1$ and the norm
$\|\xi\|_{\psi_2}:=\inf\{c>0:\E[\psi_2(|\xi|/c)]\le1\}$. By~(4.3.3) in
\cite{delaPenaGine1999} there is a constant $c>0$ such that
$\E[|\xi|]\le c \|\xi\|_{\psi_2}$. So we obtain the bound
\begin{align*}
 \E_\eps\left[\left\|\frac{\sum_{j=1}^{n}\eps_j
h(X_{n,j})}{\sqrt{n}} \right\|_{\mathcal H_{2\delta}'}\1_ { A_n } \right
]\le c \left\|\sup_{d_Z(h,h')<2\sqrt{2}\delta}\left|\frac{\sum_{j=1}^{n}\eps_j
(h(X_{n,j})-h'(X_{n,j}))}{\sqrt{n}} \right|  \1_{A_n} \right\|_{\psi_2}.
\end{align*}
Next we apply Dudley's theorem in the form of Corollary~5.1.6 and Remark~5.1.7
in \cite{delaPenaGine1999} to the process $Z$. This yields a constant $K$ such
that
\begin{align*}
 \E_\eps\left[\left\|\frac{\sum_{j=1}^{n}\eps_j
h(X_{n,j})}{\sqrt{n}} \right\|_{\mathcal H_{2\delta}'}\right
]\1_ { A_n }
&\le
K \int_0^{2\sqrt{2}\delta}\left(\log(N(\eps,\mathcal
H_n,d_Z))\right)^{1/2}\d\eps \1_{A_n}\\
&\le
K \int_0^{2\sqrt{2}\delta}\left(\log(N(\eps/\sqrt{2},\mathcal
H_n,L^2(P_n)))\right)^{1/2}\d\eps \1_{A_n},\\
&\le\sqrt{2}
K \int_0^{2\delta}\left(\log(N(\eps,\mathcal Q_n,L^2(P_n)))\right)^{1/2}\d\eps,
\end{align*}
a bound independent of $X$. In order to complete the proof we take expectation with respect to $X$,
consider
the limit $\lim_{\delta\to0}\limsup_{n\to\infty}$ of the expression and
apply condition~\eqref{uniform_entropy_integrals}.
\end{proof}

\medskip

To proceed with the tightness proof for the critical term we will show conditions~\eqref{envelopes}
to~\eqref{uniform_entropy_integrals} for
$r_n:=\log(1/\Delta_n)^{-\alpha}$, $\alpha\in(1/2,1)$, and for the class $
Q_n=\{\tilde
g_t^{(2)}:t\le0\}$ defined above.

\eqref{envelopes}
We rewrite
\begin{align}
 \tilde
g^{(2)}_t&=\Delta^{-1/2}i\F^{-1}[m'(-u)\F[t\rho(t)e^{y-t}\1_{(-\infty,t]}(y)](u)
] \notag\\
&\quad+\Delta^{-1/2}\F^{-1}[m(-u)\F[t\rho(t)ye^{y-t}\1_{(-\infty,t]}(y)](u)]
\notag\displaybreak[0]\\
&=\Delta^{-1/2}i\F^{-1}[m'(-u)\F[t\rho(t)e^{y-t}\1_{(-\infty,t]}(y)](u)]\notag\\
&\quad+\Delta^{-1/2}\F^{-1}[m(-u)\F[t\rho(t)(y-t)e^{y-t}\1_{(-\infty,t]}(y)](u)]
\label{eqDecomposition}\\
&\quad+\Delta^{-1/2}\F^{-1}[m(-u)\F[t^2\rho(t)e^{y-t}\1_{(-\infty,t]}(y)](u)],
\notag
\end{align}
where the last step also shows that the bounded variation norm of
\(
t\rho(t)ye^{y-t}\1_{(-\infty,t]}(y)
\)
is bounded uniformly in $t\le0$.
If $\F^{-1}[m]$, $\F^{-1}[m']$ are finite signed measures as in
Assumption~\ref{multass}(a), then the bounded
variation norms of
$\F^{-1}[m'(-\bull)]*(t\rho(t)e^{y-t}\1_{(-\infty,t]}(y))$ and
$\F^{-1}[m(-\bull)]*(t\rho(t)ye^{y-t}\1_{(-\infty,t]}(y))$ are bounded uniformly
in
$t\le0$
and
\begin{align*}
 \|\tilde g^{(2)}_t\|_\infty\le\|\tilde g^{(2)}_t\|_{BV}\lesssim \Delta^{-1/2},
\end{align*}
where $\|f\|_{BV}$ denotes the bounded variation
norm equal to the sum of the $\ell^\infty$-norm of $f$ and the usual total
variation norm of the weak derivative $Df$.
For $m$ supported in $[-C\Delta^{-1/2},C\Delta^{-1/2}]$ as in
Assumption~\ref{multass}(b),
we have
\begin{equation*}
 \|\tilde g^{(2)}_t\|_\infty\lesssim \|\tilde
g^{(2)}_t\|_{B^0_{\infty,1}}\lesssim
\|\tilde g^{(2)}_t\|_{B^1_{1,1}}
\end{equation*}
and
the Fourier transform of $\tilde g^{(2)}_t$ is supported on
$[-C\Delta^{-1/2},C\Delta^{-1/2}]$. In view of the
Littlewood-Paley definition of Besov spaces we can
estimate the $B^1_{11}(\R)$-norm of $\tilde g^{(2)}_t$ by
$\log(C/\Delta^{1/2})$-times its $B^1_{1\infty}(\R)$-norm. With the Fourier
multiplier
property of $m$ and $m'$ this yields
\begin{align*}
 \|\tilde g^{(2)}_t\|_\infty &\lesssim
\log(C/\Delta^{1/2})\|\tilde g^{(2)}_t\|_{B^1_{1\infty}}\\
&\lesssim
\Delta^{-1/2}\log(1/\Delta)(\|t\rho(t)e^{y-t}\1_{(-\infty,t]}(y)\|_{B^1_{1\infty
}}+\|t\rho(t)ye^{y-t}\1_{(-\infty,t]}(y)\|_{B^1_{1\infty}})\\
&\lesssim\Delta^{-1/2}\log(1/\Delta),
\end{align*}
since the $B^1_{1\infty}(\R)$-norm of
$t\rho(t)e^{y-t}\1_{(-\infty,t]}(y)$ and $t\rho(t)ye^{y-t}\1_{(-\infty,t]}(y)$
are uniformly in $t$ bounded by
integrability and
bounded variation.
So $M_n$ can be chosen proportional to $\Delta^{-1/2}\log(1/\Delta)$ and
$nr_n^2M_n^{-2}\to\infty$ by $\log^4(1/\Delta)=o(n\Delta)$.

\eqref{r_increments} We will show condition~\eqref{r_increments} by applying a moment inequality for empirical processes under uniform
entropy bounds for $Q_n$.
We decompose $\tilde g^{(2)}_t$ according to \eqref{eqDecomposition}. Using
that $B_{11}^{1}(\R)$ embeds continuously into the space $\text{BV}$ of bounded
variation functions,
the bounds in \eqref{envelopes} show that
\begin{align}
 \|\Delta^{-1/2}\F^{-1}[m'(-u)\F[e^{y-t}\1_{(-\infty,t]}(y)](u)]\|_{BV}
&\lesssim
\Delta^{-1/2}\log(1/\Delta),\notag\\
 \|\Delta^{-1/2}\F^{-1}[m(-u)\F[(y-t)e^{y-t}\1_{(-\infty,t]}(y)](u)]\|_{BV}
&\lesssim
\Delta^{-1/2}\log(1/\Delta),\label{BV_bound}\\
 \|\Delta^{-1/2}\F^{-1}[m(-u)\F[e^{y-t}\1_{(-\infty,t]}(y)](u)]\|_{BV}
&\lesssim
\Delta^{-1/2}\log(1/\Delta),\notag
\end{align}
where we omitted the factors $t\rho(t)$ and  $t^2\rho(t)$ to obtain
translation invariant
classes. Since the functions in the class
\begin{align}
 \mathfrak{F}_n:=\{\Delta^{-1/2}\F^{-1}[m'(-u)\F[e^{y-t}\1_{(-\infty,t]}(y)](u)]
:t\le0\}
\end{align}
are of bounded variation, we can write
them as the composition of a 1-Lipschitz function after a nondecreasing
function.
The class of all translates of a nondecreasing function has VC index~2 and thus
polynomial $L^2(\QQ)$-covering numbers uniformly in all probability measures
$\QQ$ by Theorem~5.1.15 in \cite{delaPenaGine1999}. The $\eps$-covering numbers
are preserved under 1-Lipschitz transformations and thus the covering
numbers of $\mathfrak F_n$ are polynomial in
$M_n/\eps$. The $\eps$-covering numbers of $\{t\rho(t):t\in\R\}$
are
polynomial in $1/\eps$. To obtain an $\eps$-covering of the functions in the
first term of \eqref{eqDecomposition} we cover
the class
$\mathfrak{F}_n$ by balls of size $\eps/2$ and the class
$\{t\rho(t):t\in\R\}$ by balls of size $\eps/(2M_n)$. We see that the
covering numbers can be bounded by a product of two
polynomial
covering numbers and thus are polynomial in $M_n/\eps$.
Arguing in the same way for the two other terms in \eqref{eqDecomposition}
yields polynomial covering numbers for them, too.
Using that the covering numbers of $Q_n$ can be bounded by the product of the
covering numbers for the respective terms we see that the covering numbers of
$Q_n$ are polynomial in $M_n/\eps$.
By Proposition~3 in
\cite{GineNickl2009} there exists a universal constant $L>0$ such that
\begin{align*}
\E \left\|\frac{1}{\sqrt{n}}\sum_{j=1}^n\eps_j
q(X_{n,j})\right\|_{(Q_n)'_{r}}\le L
\max\left(r_n\sqrt{\log\left(\frac{M_n}{r_n}\right)},\frac{M_n}{\sqrt{n}}
\log\left(\frac{M_n}{r_n}\right)\right).
\end{align*}
Condition \eqref{r_increments} is satisfied if this maximum tends to zero.
We have
\begin{align*}
 r_n\sqrt{\log\left(\frac{M_n}{r_n}\right)}
 \lesssim\frac{\sqrt{\log\left(\log(1/\Delta)^{1+\alpha}/\Delta^{1/2}\right)}}{
\log(1/\Delta)^\alpha}
 \lesssim\frac{\sqrt{\log(\log(1/\Delta))}}{\log(1/\Delta)^\alpha}+\frac{
\sqrt { \log(1/\Delta)}}{\log(1/\Delta)^\alpha}\to0,
\end{align*}
and
\begin{align*}
 \frac{M_n}{\sqrt{n}}
\log\left(\frac{M_n}{r_n}\right)
&\lesssim\frac{\log(1/\Delta)}{\sqrt{\Delta n}}
\log\left(\frac{\log(1/\Delta)^{1+\alpha}}{\Delta^{1/2}}\right)\\
&\lesssim\frac{\log(1/\Delta)\log(\log(1/\Delta))}{\sqrt{\Delta n}}
+\frac{\log(1/\Delta)^2}{\sqrt{\Delta n}},
\end{align*}
which tends to zero by $\log^4(1/\Delta)=o(n\Delta)$.

\eqref{entropy_bound}
In order to verify \eqref{entropy_bound}, we will show that
$H(\eps,Q_n,L^2(\PP_\Delta)\lesssim
\log(\eps^{-1})$ uniformly in $n$.
Applying Proposition~\ref{propPseudoLocality}
with $j=k=1$, $p=q=2$ and $s=2$ yields
that for $\mu=m(-\bull)$
and for
all $f\in L^2(\R)$ with $\supp (f)\cap (-\delta,\delta)=\varnothing$ for some
$\delta>0$
\begin{align}
  \Delta^{-1/2}\|x(\F^{-1}[m(-u)]*f)\|_{2,\PP_\Delta}
  &\lesssim\left(
\delta^{-1/2}\|m\|_\infty+\delta^{-1}\Delta^{-1/2}\|m''\|_{L^2}\right)\|f\|_
{L^2}\notag\\
  &\lesssim \left(\delta^{-1/2}\vee
\delta^{-1}\right)\|f\|_{L^2}\label{eqApplProp}
\end{align}
where we used $\|m\|_\infty\le C$ and $\Delta^{-1/2}\|m''\|_{L^2}\to0$ by
Assumption~\ref{multass}.

Let $M\ge1$ and $\eta\in[0,1]$.
We will distinguish the three cases $s,t\le-M$, $s,t\in[-M,-\eta]$ and
$s,t\in[-\eta,0]$.

\noindent
\textbf{Case 1:}
Let $s,t\le-M$. We apply \eqref{eqApplProp} to $\delta=M$ and
$f(y):=q_s(y)-q_t(y)$ with $q_t$ defined in \eqref{eqQ}. Noting that we can
bound $\|q_t\|_{L^2}\lesssim
M^{-1}$ uniformly in $t\le0$, we obtain for $s,t\le-M$
\begin{align*}
 \|\tilde g_s^{(2)} -\tilde g_t^{(2)}\|_{2,\PP_\Delta}\lesssim M^{-3/2}.
\end{align*}

\noindent
\textbf{Case 2:}
For the second case let $-M\le s,t\le-\eta$.
We apply \eqref{eqApplProp} with $\delta=\eta$ to $f(y):=q_s(y)-q_t(y)$.
Without loss of generality we assume $s\le t$. We estimate
\begin{align*}
& \int_{-\infty}^{t}(q_s(y)-q_t(y))^2\d y\notag\\
&=\int_{-\infty}^{0}\left(s\rho(s)e^{y}-t\rho(t)e^{y+s-t}\right)^2\d
y+\int_{s}^{t}t^2\rho(t)^2e^{2(y-t)}\d
y\notag\\
&\le 2\left(s\rho(s)-t\rho(t)\right)^2\int_{-\infty}^0e^{2y}\d
y+2t^2\rho(t)^2\int_{-\infty}^0(1-e^{s-t})^2e^{2y}\d y
+t^2\rho(t)^2|s-t|\notag\\
&\lesssim |s-t|^2+|s-t|
\end{align*}
by the Lipschitz continuity of $x\rho$ and
obtain for $s,t\in[-M,-\eta]$ with $|s-t|\le1$
\begin{align*}
 \|\tilde g_s^{(2)} -\tilde g_t^{(2)}\|_{2,\PP_\Delta}\lesssim
|s-t|^{1/2}/\eta.
\end{align*}

\noindent
\textbf{Case 3:}
Let $-\eta\le s,t\le0$. We have $\|\tilde g_s^{(2)} -\tilde
g_t^{(2)}\|_{2,\PP_\Delta}\le2\sup_{t\in[-\eta,0]}\|\tilde
g_t^{(2)}\|_{2,\PP_\Delta}$. We apply Proposition~\ref{propPseudoLocality} with
$f=q_t$, $\mu=m(-\bull)$,
$\delta=|t|$, $k=1$, $j=1$, $p=2$, $q=2$ and $s=2$. We have
\begin{align*}
 |t|^{-1}\|q_t\|_{L^2}^2&=|t|\rho(t)^2\int_{-\infty}^{0}e^{2y}\d y\lesssim
|t|\quad\text{and}\\
 t^2\left\|q_t(y)/y^2\right\|_{L^2}^2&\le\int_{-\infty}^t\frac{t^4}{y^4}\d
y=|t|\int_{-\infty}^{-1}x^{-4}\d x\lesssim |t|.
\end{align*}
and consequently
$\|\tilde g_s^{(2)} -\tilde
g_t^{(2)}\|_{2,\PP_\Delta}\lesssim\eta^{1/2}$ for $-\eta\le s,t\le0$.

Having treated these three cases we can show $N(\eps,
Q_n,L^2(\PP_\Delta))\lesssim \eps^{-7}$. For an integer $J>0$ we
consider the grid of points $t_j=-j J^{-6}$ with $j=J^4, J^4+1, J^4+2,\dots,
J^{7}$. We take $\eta=J^{-2}$. By Case~3 we see that $\|\tilde
g_s^{(2)}-\tilde
g_t^{(2)}\|_{2,\PP_\Delta}\lesssim J^{-1}$ for all $s,t\in[-J^{-2},0]$. By
Case~2 we
have
$\|\tilde g_s^{(2)}-\tilde
g_t^{(2)}\|_{2,\PP_\Delta}\lesssim|s-t|^{1/2}/\eta\le J^{-1}$ for
$s,t\in[-(j+1)J^{-6},-jJ^{-6}]$. And by Case~1 $\|\tilde g_s^{(2)}-\tilde
g_t^{(2)}\|_{2,\PP_\Delta}\lesssim J^{-1}$ for $s,t\le-J$.

We have polynomial covering numbers and it suffices
for condition~\eqref{entropy_bound} that
\begin{align*}
 \frac{nr_n^2}{M_n^2\log\left({r_n}^{-1}\right)}\to\infty.
\end{align*}
In \eqref{envelopes} we have seen that $M_n\lesssim
\Delta^{-1/2}\log(1/\Delta)$.
For the choice $r_n=\log(1/\Delta)^{-\alpha}$ we obtain
\begin{align*}
 \frac{n
r_n^2}{M_n^2\log(1/r_n)}\gtrsim\frac{n\Delta}{
\log(1/\Delta)^{2+2\alpha}\log(\log(1/\Delta)) },
\end{align*}
which tends to infinity by $\log^4(1/\Delta)=o(n\Delta)$.

\eqref{uniform_entropy_integrals} In \eqref{entropy_bound} we have seen that
the covering numbers $N(\eps,Q_n,L^2(\PP_\Delta))$ are uniformly in
$n$ polynomial in $\eps^{-1}$ so that the condition is satisfied.

\smallskip

\textbf{Acknowledgement.} The authors acknowledge insightful remarks from the
Associate Editor and two anonymous referees that helped to improve the
presentation of the paper.

Financial Support by the Deutsche
Forschungsgemeinschaft via FOR 1735 Structural Inference in Statistics is
gratefully acknowledged.

\bibliography{bib} 

\end{document}